\newcommand{\bb}[1]{\mathbb{#1}}
\newcommand{\const}[1]{k_{#1}}
\renewcommand{\epsilon}{\varepsilon}
\newcommand{\llb}{\left\lbrace}
\newcommand{\rrb}{\right\rbrace}
\newcommand{\vass}[1]{\left| #1\right|}
\newcommand{\Vbin}{V_{\mathrm{bin}}}
\newcommand{\eps}{\varepsilon}
\theoremstyle{plain}
\newtheorem{theorem}{Theorem}[section]
\newtheorem{lemma}[theorem]{Lemma} 
\newtheorem*{lemma*}{Lemma} 
\newtheorem{corollary}[theorem]{Corollary}
\newtheorem{setting}{Setting}
\theoremstyle{remark}
\newtheorem{claim}[theorem]{Claim}
\theoremstyle{definition}
\newtheorem{definition}[theorem]{Definition}
\newcommand{\oldqed}{}
\newcommand{\qedClaim}{\hfill\scalebox{.6}{$\Box$}} %% change this for other symbol
\newenvironment{claimproof}[1][Proof]{
	\renewcommand{\oldqed}{\qedsymbol}
	\renewcommand{\qedsymbol}{\qedClaim}
	\begin{proof}[#1]
	}{
	\end{proof}
	\renewcommand{\qedsymbol}{\oldqed}
}
  \title{The Ramsey numbers of squares of paths and cycles}
  \author{Peter Allen, Domenico Mergoni Cecchelli, Barnaby Roberts and Jozef Skokan}
  \address[P.~Allen $\vert$ D. Mergoni $\vert$ B.~Roberts $\vert$ J.~Skokan]{London School of Economics, London, WC2A 2AE, UK.}
\email{(p.d.allen$\vert$d.mergoni$\vert$j.skokan)@lse.ac.uk}
\email{roberts.barnaby@gmail.com}
\address[J.~Skokan]{Department of Mathematics, University of Illinois at Urbana-Champaign,  Urbana, IL 61801, USA.}
  \date{}
\begin{document}
\maketitle
\begin{abstract}
The square $G^2$ of a graph $G$ is the graph on $V(G)$ with a pair of vertices $uv$ an edge whenever $u$ and $v$ have distance $1$ or $2$ in $G$. Given graphs $G$ and $H$, the Ramsey number $R(G,H)$ is the minimum $N$ such that whenever the edges of the complete graph $K_N$ are coloured with red and blue, there exists either a red copy of $G$ or a blue copy~of~$H$.

We prove that for all sufficiently large $n$ we have
\[R(P_{3n}^2,P_{3n}^2)=R(P_{3n+1}^2,P_{3n+1}^2)=R(C_{3n}^2,C_{3n}^2)=9n-3\mbox{ and } R(P_{3n+2}^2,P_{3n+2}^2)=9n+1.\] 

We also show that for any $\gamma>0$ and $\Delta$ there exists $\beta>0$ such that the following holds: If $G$ can be coloured with three colours such that all colour classes have size at most $n$, the maximum degree $\Delta(G)$ of $G$ is at most $\Delta$, and $G$ has bandwidth at most $\beta n$, then $R(G,G)\le (3+\gamma)n$.
\end{abstract}

\thispagestyle{empty}
\section{Introduction}
Given graphs $G$ and $H$, the Ramsey number $R(G,H)$ is the minimum $N$ such that whenever the edges of the complete graph on $N$ vertices $K_N$ are coloured with red and blue, there exists either a red copy of $G$ or a blue copy of $H$.

The study of Ramsey numbers has a long history, and in general it is hard to find even good upper and lower bounds on $R(G,H)$. In this paper, we are interested in the case that $G$ and $H$ are sparse graphs. In this case, if $G$ is connected and $v(G)\ge\sigma(H)$, one has the lower bound 
\begin{equation}\label{eq:burr}
R(G,H)\ge \big(\chi(H)-1\big)\big(v(G)-1\big)+\sigma(H).
\end{equation}
Here $v(G)$ denotes the number of vertices of $G$, $\chi(H)$ is the chromatic number of $H$, and $\sigma(H)$ is the minimum, over all $\chi(H)$-colourings of $H$, of the smallest colour class size. This lower bound is due to Burr~\cite{Burr}, with the corresponding construction being $\chi(H)-1$ vertex-disjoint red cliques each on $v(G)-1$ vertices, plus one further red clique on $\sigma(H)-1$ vertices, and all other edges blue. When this construction gives the Ramsey number (i.e. when we have an equality in \eqref{eq:burr}), we say that  $G$ is $H$\emph{-good}.

For fixed graphs $H$, the class of graphs $G$ which are $H$-good is quite well understood; see Allen, Brightwell and Skokan~\cite{ABS} and Nikiforov and Rousseau~\cite{NikiRous}. However much less is known about the case when $H$ grows with $v(G)$, or when $H=G$. Burr~\cite{Burr} conjectured that for fixed $\Delta$, every connected graph $G$ with $\Delta(G)\le\Delta$ and $v(G)$ large enough is $G$-good. This statement holds for $G=P_n$~~\cite{GG67} and $G=C_n$~\cite{BE73,R73}. However it was disproved by Graham, R\"odl and Ruci\'nski~\cite{GRR}, who showed that it fails badly for expander graphs, and again in~\cite{ABS}, where a lower bound on $R(P_n^k,P_n^k)$ better than \eqref{eq:burr} is shown for each $k\ge2$. In the latter paper, however, it is shown that Burr's conjecture is off by at most a factor (roughly) $2$ when $G$ has bounded maximum degree and sublinear bandwidth. Here the bandwidth of $G$ is the smallest $k$ such that $G$ is a subgraph of $P^k_{v(G)}$. 
%if $G$ is a graph on $\{1,\dots,v(G)\}$ then the bandwidth of this ordering of $G$ is $\max_{uv\in E(G)}|u-v|$, and the bandwidth of $G$ is the minimum bandwidth over all orderings).

In~\cite{ABS}, a value for the Ramsey numbers of squares of paths, and squares of cycles on a number of vertices divisible by $3$, is conjectured. We observe that the conjectured value is wrong by one, and prove the modified conjecture.

\begin{restatable}{theorem}{thmmain}\label{thm:main}
 There exists $n_0$ such that for all $n\ge n_0$ we have:
 \[R(P_{3n}^2,P_{3n}^2)=R(P_{3n+1}^2,P_{3n+1}^2)=R(C_{3n}^2,C_{3n}^2)=9n-3\mbox{ and } R(P_{3n+2}^2,P_{3n+2}^2)=9n+1.\]
\end{restatable}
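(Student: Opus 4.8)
The plan is to prove matching lower and upper bounds. For the lower bounds, I would use the Burr construction of \eqref{eq:burr}: since $P_m^2$ and $C_m^2$ have chromatic number $3$ (for $m$ divisible by $3$; and $4$ otherwise, which explains the $+1$ jump for $3n+2$), and since $\sigma(P_{3n}^2)=n$ while $\sigma(P_{3n+2}^2)$ is larger by one in the relevant colouring, one gets $R\ge 2(3n-1)+n=9n-3$ in the divisible case and $R\ge 9n+1$ for $P_{3n+2}^2$. One must be slightly careful to identify $\sigma$ exactly for squares of paths and cycles; this is a short combinatorial computation, and it is precisely here that the ``off by one'' correction to the conjecture of \cite{ABS} enters. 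The bulk of the work is the upper bound: given a red/blue colouring of $K_N$ with $N=9n-3$ (resp.\ $9n+1$), find a monochromatic copy of the relevant graph.

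For the upper bound I would invoke the second result quoted in the abstract --- call it the \emph{bandwidth theorem for Ramsey goodness-type bounds} --- which says that bounded-degree, sublinear-bandwidth, $3$-colourable (with colour classes of size at most $n$) graphs $G$ satisfy $R(G,G)\le(3+\gamma)n$. The squares $P_{3n}^2,P_{3n+1}^2,C_{3n}^2$ all have maximum degree $4$, bandwidth $2$, and a proper $3$-colouring into classes of size $n$ (resp.\ essentially so), so this gives $R\le(3+\gamma)n=9n+O(\gamma n)$, which is the right order but not tight. To get the exact value one needs a stability/absorption refinement: run the regularity-plus-blow-up machinery behind the bandwidth theorem, but track constants carefully, and handle the extremal configurations (those close to the Burr construction, i.e.\ where the colouring looks like two red cliques of size $\approx 3n$) separately and by hand. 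In the near-extremal case one embeds the path/cycle square greedily across the two large red cliques, using the few blue edges to bridge; in the non-extremal case one shows there is enough ``expansion'' that the $(3+\gamma)n$ bound already has slack to spare. The passage from $P^2$ to $C^2$ (closing up the square-path into a square-cycle) costs only a constant number of vertices and is absorbed.

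The key steps, in order, are: (1) compute $\chi$ and $\sigma$ for $P_m^2$ and $C_m^2$ in each residue class of $m \bmod 3$, yielding the lower bounds and pinning down why $3n+2$ behaves differently; (2) state and set up the regularity/bandwidth framework, reducing an arbitrary colouring of $K_N$ to a coloured reduced graph; (3) prove a stability dichotomy: either the colouring is $o(n)$-close to the Burr extremal colouring, or the reduced graph contains a robust structure (a suitable connected matching or a pair of large cliques with a connecting edge of each colour) into which $P_N^2$ embeds with room to spare; (4) in the extremal case, do a direct, careful embedding of $P_{3n}^2$ (and then $C_{3n}^2$, $P_{3n+1}^2$, $P_{3n+2}^2$) exploiting the near-clique structure; (5) assemble the cases to conclude $R\le 9n-3$ (resp.\ $9n+1$), matching the lower bound.

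The main obstacle I anticipate is step (3)–(4): the $(3+\gamma)n$ bound from the bandwidth theorem is genuinely lossy at the level of the additive constant, so simply quoting it does not suffice, and one must re-examine the extremal colourings with enough precision to squeeze out the exact constants $-3$ and $+1$. Concretely, the delicate point is the embedding in the near-extremal regime, where the two large red cliques have sizes summing to just under $2(3n-1)$ and one must embed a square-path that necessarily uses blue edges to cross between them --- controlling exactly how many vertices are ``lost'' at each crossing, and showing this loss is at most what the $+\sigma$ term in the Burr bound allows, is where the real combinatorial care is needed. A secondary subtlety is ensuring the parity/divisibility bookkeeping (the distinction between $3n$, $3n+1$, $3n+2$, and the cycle versus path cases) is handled uniformly rather than by four separate arguments.
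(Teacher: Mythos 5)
Your lower bound is wrong, and this is not a minor slip: it is the central point of the problem. The Burr bound \eqref{eq:burr} for $G=H=P_{3n}^2$ gives $(\chi-1)(v-1)+\sigma = 2(3n-1)+n = 7n-2$, not $9n-3$ (your arithmetic "$2(3n-1)+n=9n-3$" is false). Indeed the paper recalls that squares of paths are a known counterexample to Burr's conjecture: no computation of $\chi$ and $\sigma$ will get you to $9n-3$, because the true Ramsey number is roughly $3v(G)$ while Burr's construction only certifies roughly $\tfrac{7}{3}v(G)$. The actual lower bound comes from a five-block colouring (four sets of size $2n-1$, one of size $n-1$, plus one extra vertex, totalling $9n-4$) in which edges are coloured so that every monochromatic \emph{triangle component} becomes bipartite or independent after deleting a set of fewer than $n$ (or $2n$) vertices, and hence cannot contain a triangle factor on $3n$ vertices --- which any monochromatic $P_{3n}^2$ would require. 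The "off by one" correction to the conjecture of the earlier paper comes from adding the single extra vertex $z$ to that construction, not from identifying $\sigma$. Also, $\chi(P_{3n+2}^2)=3$, not $4$; it is $C_m^2$ with $3\nmid m$ whose chromatic number is $4$, which is why the theorem treats only $C_{3n}^2$.

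Your upper bound outline is closer in spirit to the paper (regularity, a stability dichotomy, and a hands-on extremal embedding), but it aims at the wrong extremal structure. Since the lower bound construction is not two large red cliques, the stability statement you need is not "close to the Burr colouring" but "close to the five-block colouring $X_1,X_2,Y_1,Y_2,Z$ with the specific colour pattern of Figure~\ref{fig:construct}"; the paper's Lemma~\ref{MainLemma} establishes exactly this dichotomy, with the non-extremal outcome being a monochromatic \emph{triangle-connected triangle factor} on slightly more than $\tfrac13$ of the reduced graph (a connected matching, as in the \L{}uczak method for ordinary paths, is not enough to embed a square of a path). The exact constants $-3$ and $+1$ are then extracted not by tuning an embedding across two cliques but by showing that in the five-block structure each of $X_1,X_2,Y_1,Y_2$ (augmented by its share of exceptional vertices) has at most $2n-1$ vertices and $Z$ at most $n-1$, forcing $N\le 9n-4$. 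As written, your plan would prove neither the claimed lower bound nor locate the correct extremal colourings for the upper bound.
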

The lower bound part of this theorem is the following construction from~\cite{ABS}. We take disjoint vertex sets $X_1,X_2,Y_1,Y_2$ each with $2n-1$ vertices, plus $Z$ with $n-1$ vertices. We colour edges within each $X_i$ blue and within each $Y_i$ red. We colour edges in the bipartite graphs $(X_1,X_2)$ and $(X_i,Z)$ red, and in $(Y_1,Y_2)$ and $(Y_i,Z)$ blue. We colour $(X_1,X_2)$ and $(Y_1,Y_2)$ blue, and $(X_1,Y_2)$ and $(X_2,Y_1)$ red. Finally, we add a single vertex $z$, which sends blue edges to $X_1\cup X_2$ and red to $Y_1\cup Y_2$. The edges within $Z\cup\{z\}$ may be coloured arbitrarily, as illustrated in Figure~\ref{fig:construct}. A short case analysis demonstrates that this construction does not contain a monochromatic $P_{3n}^2$. Furthermore, we can add one extra vertex to each of $X_1,X_2,Y_1,Y_2$ and still have no $P_{3n+2}^2$.

\begin{figure}[h]
\begin{center}
\begin{tikzpicture}
\node (A) at ( 2,7) [circle, fill=blue!80,inner sep=8pt,label=left:$X_1$] {$2n-1$};
\node (B) at ( 2,4) [circle, fill=blue!80,inner sep=8pt,label=left:$X_2$] {$2n-1$};
\node (C) at ( 4,2) [circle, fill=purple!80,inner sep=5pt,label=left:$Z$] {$n-1$};
\node (D) at ( 6,7) [circle, fill=red,inner sep=8pt,label=right:$Y_1$] {$2n-1$};
\node (E) at ( 6,4) [circle, fill=red,inner sep=8pt,label=right:$Y_2$] {$2n-1$};
\node (z) at ( 4,9) [circle, fill=red,inner sep=2pt,label=left:$z$] {};

  \foreach \from/\to in {A/B,A/C,B/C,A/E,B/D}
    \draw [red,line width=5pt] (\from) -- (\to);
   
  \foreach \from/\to in {D/E,D/C,E/C,A/D,B/E}
    \draw [blue!80,line width=5pt] (\from) -- (\to); 
    
  \foreach \from/\to in {z/A,z/B}
    \draw [blue!80,line width=2pt] (\from) -- (\to);
   
  \foreach \from/\to in {z/D,z/E}
    \draw [red,line width=2pt] (\from) -- (\to); 
    
  \foreach \from/\to in {z/C}
    \draw [purple!30,line width=2pt] (\from) -- (\to); 
\end{tikzpicture}
\end{center}
\caption{Lower bound construction}\label{fig:construct}
\end{figure}
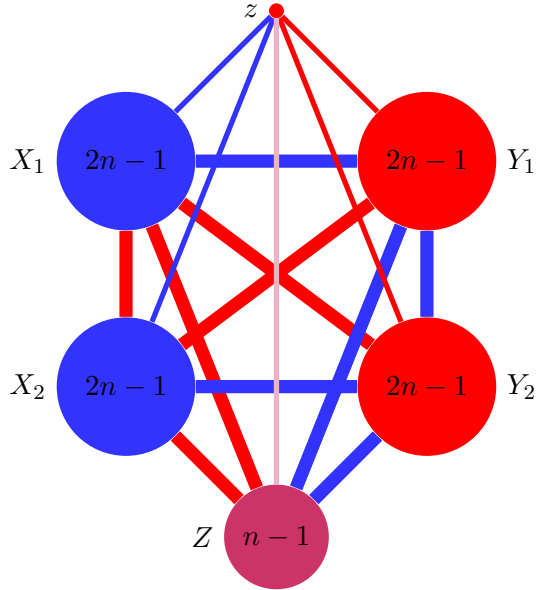

In addition, we give a general upper bound on Ramsey numbers for $3$-colourable graphs with bounded maximum degree and sublinear bandwidth, which $P_{3n}^2$ demonstrates is asymptotically tight.

\begin{theorem}\label{thm:boundBW}
 Given $\gamma>0$ and $\Delta$, there exist $\beta>0$ and $n_0$ such that for all $n\ge n_0$ the following holds. Suppose that $H$ is a graph with $\Delta(H)\le\Delta$, with bandwidth at most $\beta n$, and with a proper vertex $3$-colouring all of whose colour classes have at most $n$ vertices. Then $R(H,H)\le(9+\gamma)n$.
\end{theorem}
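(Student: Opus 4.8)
Let me think about how to prove: given $\gamma > 0$ and $\Delta$, there exist $\beta > 0$ and $n_0$ such that for $n \ge n_0$, if $H$ has $\Delta(H) \le \Delta$, bandwidth at most $\beta n$, and a proper 3-colouring with all colour classes of size at most $n$, then $R(H, H) \le (9 + \gamma) n$.

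So $H$ has at most $3n$ vertices. We want to 2-colour $K_N$ with $N = (9+\gamma)n$ (roughly $3 v(H)$) and find a monochromatic copy of $H$.

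The standard approach for embedding bounded-degree, sublinear-bandwidth graphs is the **Bandwidth Theorem** (Böttcher–Schacht–Taraz) combined with the **regularity method**. Actually for Ramsey-type problems, the relevant tool is a two-colour version: given a 2-colouring of $K_N$, apply the regularity lemma to get a reduced graph on the cluster set, 2-coloured (majority colour on each pair). Then one colour class of the reduced graph has many edges. We need to find in the reduced graph a structure that allows embedding $H$.

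Key insight: $H$ is 3-colourable with bounded bandwidth, so by the Bandwidth Theorem, $H$ embeds into any "robust" host with minimum degree slightly above $\frac{2}{3}$ times the number of vertices, OR into a blow-up of a triangle (three cluster-parts, each pair regular and dense). So we need: in one colour, the reduced graph contains a (nearly balanced) "triangle blow-up" structure of total size $\ge v(H)/(1-\text{slack})$, i.e., roughly $v(H)$ clusters, with each cluster large enough.

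Since $N \approx 3 v(H)$, after removing the exceptional set from regularity, we have $\approx 3 v(H)$ vertices, hence the reduced graph $R$ on, say, $t$ clusters where each cluster has $\approx N/t$ vertices. We 2-colour $E(R)$. The crucial combinatorial step: **in any 2-colouring of $K_t$, there is a monochromatic subgraph that, after a small cleaning, contains three vertex-disjoint sets $A, B, C$ each of size $\ge t/3 \cdot (1 - o(1))$ with all pairs $(A,B), (B,C), (A,C)$ "dense" in that colour** — i.e., a monochromatic blow-up of $K_3$ covering almost all of $[t]$. Equivalently: in any 2-colouring of $K_t$, one colour spans a subgraph with a monochromatic "triangle partition structure" of size $\approx t$. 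I'd prove this via: one colour has $\ge \binom{t}{2}/2$ edges; apply a stability/Andrásfai-type argument, or directly — the complement of a triangle-free graph on more than $t^2/4$... hmm, let me reconsider. Actually the right statement is a Ramsey-type result: **$R(K_3[m], K_3[m]) \le (3 + o(1)) m$** for balanced blow-ups, which itself follows from the same machinery, OR we use: in any 2-colouring of $K_{3k}$ there is a monochromatic transitive-tournament-like triangle blow-up — this should follow from a direct argument since the Ramsey number of a triangle versus triangle is 6, and then one iterates/uses regularity of the reduced graph itself, or a hypergraph-container / flag-algebra-free elementary counting.

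**The main obstacle** is exactly this combinatorial core: showing that every 2-colouring of the reduced (cluster) graph contains, in one colour, a balanced tripartite blow-up structure on $\approx t$ vertices with dense regular pairs, so that the Bandwidth Theorem (or a direct lemma-for-graphs / blow-up lemma embedding) can place a monochromatic copy of $H$. Everything else is routine regularity bookkeeping: apply the Regularity Lemma to the 2-coloured $K_N$, pass to the reduced graph with reduced densities and a majority colour, clean up to get super-regular triples on the three parts, balance the parts by discarding a $\gamma$-fraction of vertices, redistribute exceptional vertices, and apply the Blow-up Lemma / Bandwidth Theorem to embed $H$ (whose colour classes have size $\le n \le \frac{N}{9+\gamma} \cdot (1+o(1))$, so each of the three parts of size $\approx N/3 \approx 3n$ comfortably accommodates its $\le n$ vertices of $H$). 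I would also need the standard trick of handling $H$'s bandwidth: partition $V(H)$ into consecutive blocks respecting the 3-colouring so that consecutive blocks interleave across at most two clusters, which the bandwidth condition $\beta n$ guarantees once $\beta$ is small relative to the regularity parameters.

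\begin{proof}[Proof sketch]
We only sketch the argument, as it is a by-now standard application of the regularity method together with the Bandwidth Theorem of Böttcher, Schacht and Taraz. Set $m = v(H) \le 3n$ and let $N = (9+\gamma)n$, so $N \ge 3m$ up to lower-order terms. Given a red/blue colouring of $K_N$, apply the Szemerédi Regularity Lemma to obtain, for a suitable small $\varepsilon = \varepsilon(\gamma,\Delta)$, an $\varepsilon$-regular partition of $V(K_N)$ into $t$ clusters $V_1,\dots,V_t$ of equal size (plus a negligible exceptional set), with $t$ bounded in terms of $\varepsilon$. Form the reduced graph on $[t]$, keeping a pair as a red (resp.\ blue) edge when the pair is $\varepsilon$-regular and the red (resp.\ majority) density exceeds, say, $1/4$; all but an $o(t^2)$ fraction of pairs receive a colour in this way. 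One of the two colours, say red, then spans at least $\binom t2/2 - o(t^2)$ edges of $[t]$.

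The combinatorial heart of the proof is the following claim: any graph on $t$ vertices with at least $\binom t2/2 - o(t^2)$ edges contains three pairwise disjoint sets $A,B,C \subseteq [t]$, each of size $(1-o(1))\,t/3$, such that each of the bipartite pairs $(A,B)$, $(B,C)$, $(A,C)$ has edge density at least $1/2$. Indeed, otherwise a stability argument (via, e.g., the triangle removal lemma or a direct Andr\'asfai--type degree argument applied to the reduced graph itself) forces the red reduced graph to be close to a complete bipartite graph, in which case the blue reduced graph contains an almost-complete balanced tripartite structure instead; so in either colour we obtain such a triple $A,B,C$, and after deleting a few more clusters we may assume each of the three pairs is $\varepsilon'$-regular with density at least $1/3$.

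Finally, within the colour class furnishing the triple, move the exceptional vertices and a further $o(t)$ clusters into an exceptional set, re-balance $A,B,C$ to have exactly equal total vertex count $M \ge (1-\gamma/2)N/3 \ge m \cdot (1-o(1))$, and pass to a super-regular tripartite structure on $A\cup B\cup C$. Since $H$ has a proper $3$-colouring with colour classes of size at most $n \le M(1+o(1))$ and bandwidth at most $\beta n$, choosing $\beta$ small relative to $\varepsilon'$ and $1/t$ lets us partition $V(H)$ into consecutive intervals compatible with the $3$-colouring and with the cluster structure; the Bandwidth Theorem (equivalently, the Blow-up Lemma applied interval by interval) then embeds $H$ monochromatically into $A\cup B\cup C$. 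This yields $R(H,H)\le (9+\gamma)n$.
\end{proof}
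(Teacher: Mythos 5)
Your overall framework (regularity lemma, reduced graph, find a monochromatic tripartite structure, embed via the Blow-up Lemma/Bandwidth Theorem) is the right genre, but the combinatorial claim you place at the heart of the proof is false, and the falsity is not repairable: it is exactly what the lower bound of this paper rules out. You claim that one colour of the reduced graph always contains three disjoint cluster-sets $A,B,C$, each of size $(1-o(1))t/3$, forming a tripartite structure suitable for embedding; the three parts then have vertex-volume about $(1-o(1))N/3\approx 3n$ each, while you only need volume $n$ per part to house a colour class of $H$. Nothing in your argument uses that $N\approx 9n$ rather than $N\approx 3n$, so if it worked it would prove $R(H,H)\le(3+\gamma)n$ for every such $H$ --- contradicting $R(P_{3n}^2,P_{3n}^2)=9n-3$ (and $P_{3n}^2$ satisfies all the hypotheses: $\Delta=4$, bandwidth $2$, balanced $3$-colouring). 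Concretely, take the reduced graph to be coloured as in Figure~1 of the paper: in each colour, every triangle-connected component supports a triangle factor on at most about $t/3$ clusters (each component either loses all its triangles upon deleting a set of $2t/9$ clusters, or upon deleting a set of $t/9$ clusters). So no monochromatic, connected, triangle-carrying tripartite structure on $(1-o(1))t$ clusters exists in either colour. Your claim survives only in a degenerate reading --- e.g.\ in $K_{t/2,t/2}$ one can find three sets with pairwise density $\tfrac12$, but the structure is triangle-free and useless for embedding a $3$-chromatic $H$ --- and your stability fallback (``then the other colour is almost complete tripartite'') is also wrong: the complement of a near-complete-bipartite graph is two near-cliques, and the genuinely hard case is the five-part extremal colouring, not the bipartite one.

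A secondary but also essential gap: density $\ge\tfrac12$ between sets of clusters in the reduced graph does not yield regular pairs between individual clusters, so the Blow-up Lemma/Bandwidth Theorem cannot be applied to $(A,B,C)$ as you describe; one needs an actual triangle factor in the reduced graph, together with triangle-connectivity so that the bandwidth ordering of $H$ can be threaded from one triangle of the factor to the next. The correct quantitative statement --- and the real content of the proof --- is that a $2$-coloured almost-complete graph on $\approx 9t'$ vertices contains a monochromatic \emph{triangle-connected triangle factor} on only slightly more than $3t'$ vertices, i.e.\ roughly one third of the clusters (Lemma~\ref{MainLemma}; in the extremal case one extracts such a TCTF directly from $B_1\cup R_1$). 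One then builds a homomorphism from $H$ into this TCTF by cutting $H$ into chunks and short connecting fragments along the bandwidth order, assigning chunks to triangles of the factor at random and routing fragments along triangle walks, and checks no cluster is overloaded before invoking the Blow-up Lemma. This is why the answer is $(9+\gamma)n$ and not $(3+\gamma)n$: the monochromatic structure you can guarantee occupies only a third of the host, and the embedding must use essentially all of it.
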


We recall from~\cite{ABS} that the bandwidth restriction in this theorem is necessary: for any given $\beta>0$, if $\Delta$ is large enough there are $n$-vertex graphs $H$ with bandwidth at most $\beta n$ and maximum degree at most $\Delta$ for which the theorem statement is false.

\medskip

Our proof method uses the stability-extremal paradigm. Using the Szemer\'edi Regularity Lemma and the Blow-up Lemma, we will argue that to find a monochromatic square of a path (or cycle, or $3$-colourable sparse graph as in Theorem~\ref{thm:boundBW}) it is enough to find in the cluster graph a monochromatic triangle factor which is `triangle connected' (which we will define later). This standard reduction leaves us looking, in a nearly complete edge-coloured graph, for a large monochromatic triangle-connected triangle factor (TCTF). The main technical work of the paper (Lemma~\ref{MainLemma}) is then to prove that a $2$-edge-coloured near complete graph on nearly $9t$ vertices will either contain a monochromatic TCTF on a little more than $3t$ vertices, or alternatively the graph must be close to the extremal example.

To prove the main lemma, we use a second partitioning method, as in~\cite{ABS}: by an iterative use of Ramsey's theorem, we partition most of the $9t$ vertices into a collection of bounded size (but quite large) monochromatic cliques. Obviously, it is easy to find a large red triangle factor in a collection of red cliques: in addition, we will see that two triangles in (or even using one edge of) the same red clique are `triangle connected' in red, and that if two red cliques are \emph{not} red triangle connected, then almost all the edges between them have to be blue. These observations were previously made in~\cite{ABS}. Where we improve compared to that paper is that we are able to deal with the interaction between cliques of different colours (whereas in~\cite{ABS} the minority colour cliques are thrown away).

\section{Notation, main lemmas and organisation}\label{sec:note}
Our graph notation is mainly standard. We will often write $|G|$ for the number of vertices in a graph $G$, and similarly $|M|$ for the number of vertices covered by a matching $M$ (i.e.\ twice the number of edges of $M$). We will often want to refer to edges (of a given colour) between two or three vertex sets. We write $(A,B)$ or $(A,B,C)$ for respectively $\{ab:a\in A,b\in B\}$ and $(A,B)\cup(A,C)\cup(B,C)$, the graph we refer to will always be clear from the context. We will work with $2$-edge-coloured graphs, and refer to the two colours as `red' and `blue'.

Given a graph $G$, we say that edges $uv$ and $uw$ of $G$ are \emph{triangle-connected} if $vw$ is an edge of $G$, we extend this to an equivalence relation on edges by transitive closure. We refer to the equivalence classes of this relation as \emph{triangle components}. We will generally want to talk about monochromatic triangle connection. Thus, if the edges of $G$ are $2$-coloured, we say that two red edges are red triangle connected if they are triangle connected in the subgraph of $G$ consisting only of red edges, we define red triangle component similarly. We also, slightly abusing notation, will say two red cliques (each with at least two vertices) are red triangle-connected if an edge (and so all edges) in one is red triangle connected to an edge (so all edges) of the other. When the colour is clear from the context (as with \emph{red} cliques) we will often just say that the two cliques are triangle connected.

A \emph{triangle factor} in a graph $G$ is a collection of vertex-disjoint triangles of $G$. It is a \emph{triangle-connected triangle factor} (TCTF) if all its edges lie in a single triangle component. Again, we will usually want to talk about monochromatic TCTFs in a $2$-edge-coloured graph $G$, and as above a red TCTF means a TCTF in the subgraph of red edges of $G$.

At this point, we are in a position to give the case analysis proving the lower bound part of Theorem~\ref{thm:main}.
\begin{proof}[Proof of Theorem~\ref{thm:main}, lower bounds]
 We begin by describing the red triangle components of the lower bound construction for $P_{3n}^2$, $P_{3n+1}^2$ and $C_{3n}^2$. The edges in $Y_1$ and in $(Y_1,X_2\cup\{z\})$, form a red triangle component. Similarly, the edges in $Y_2$ and $(Y_2,X_1\cup\{z\})$ form a red triangle component. The edges $(X_1,X_2,Z)$, together with all red edges in $Z$ and all red edges from $z$ to $Z$ which lie in a red triangle, form a red triangle component. Finally, each red edge from $z$ to $Z$ which is not in a red triangle forms a triangle component. The blue components are analogous.
 
 If the lower bound construction contains a red $P_{3n}^2$, then in particular it has a red triangle component which contains a red triangle factor with $n$ triangles. Checking each entry in the list above, observe that removing $Y_1$ from the first leaves an independent set: $X_2\cup\{z\}$ contains no red edges. But $Y_1$ contains only $2n-1$ vertices, so there cannot be a $3n$-vertex triangle factor in this component. The symmetric argument deals with the symmetric second red triangle component. For the third case, removing $Z$ leaves a bipartite graph: the only red edges are those in $(X_1,X_2)$. But $Z$ contains only $n-1$ vertices, so this component too contains no $3n$-vertex red triangle factor. Finally, trivially the single-edge components contain no red triangle factor. The argument to exclude a blue $P_{3n}^2$ is symmetric.
 
 For the modification for $P_{3n+2}^2$, adding one vertex to each of $X_1,X_2,Y_1,Y_2$, the description of triangle components above, and the explanation that the red triangle component containing $(X_1,X_2,Z)$ does not contain $P_{3n}^2$ continues to work. Observe that $P_{3n+2}^2$ has independence number $n+1$, so removing any $2n$ vertices leaves at least one edge. This observation shows that the red component consisting of edges in $Y_1$ and $(Y_1,X_2\cup\{z\})$ does not contain a red $P_{3n+2}^2$, and the other cases are symmetric.
\end{proof}

\medskip

The main work of this paper is to prove the following stability lemma, which states that a $2$-edge-coloured nearly complete graph $G$ on almost $9t$ vertices either contains a monochromatic TCTF on a little more than $3t$ vertices, or is close to the extremal example. To state it, we need one further definition.

Given an edge-coloured graph $G$, let $A\subseteq V(G)$ and $v$ a vertex of $G$ not in $A$. For $r\in\bb{R}$, we say that $v$ is \emph{$r$-blue to $A$} if $va$ is a blue edge of $G$ for all but at most $r$ vertices  $a\in A$. Similarly, given $A,B\subseteq V(G)$ disjoint, we say that \emph{$(A,B)$ is $r$-blue} if all but at most $r$ vertices in $A$ are $r$-blue to $B$ and vice versa. We define similarly \emph{$r$-red}.

We will generally use this notation with $r$ much smaller than the sets $A$ and $B$, so the reader can think of $r$-blue as meaning `almost all blue'. Our main lemma is then the following.

\begin{restatable}{lemma}{MainLem}\label{MainLemma}
There exists $\delta_0>0$ such that for any $0<h,\lambda<\delta_0$ there exist $\epsilon_0>0$ and $t_0\in\bb{N}$ such that for any $t\geq t_0$ and $0<\epsilon<\epsilon_0$ the following holds. Let $G$ be a $2$-edge-coloured graph on $(9-\epsilon)t$ vertices with minimum degree at least $(9-2\epsilon)t$. Then either $G$ contains a monochromatic TCTF on at least $3(1+\epsilon)t$ vertices or $V(G)$ can be partitioned in sets $B_1, B_2, R_1, R_2, Z, T$ such that the following hold.
\begin{enumerate}[label=(\alph*)]
    \item $(2-h)t\leq \vass{B_1}, \vass{B_2}, \vass{R_1}, \vass{R_2}\leq (2+h)t$,
    \item $(1-h)t\leq \vass{Z}\leq (1+h)t$,
    \item all the edges in $G[B_1]$ and $G[B_2]$ are blue, and all the edges in $G[R_1]$ and $G[R_2]$ are red,
    \item all the edges between the pairs $(B_1,R_1)$, $(B_2, R_2)$, $(R_1, Z)$ and $(R_2,Z)$ are blue, and those between the pairs $(B_1,R_2)$, $(B_2, R_1)$, $(B_1, Z)$ and $(B_2,Z)$ are red,
    \item the pair $(B_1, B_2)$ is $\lambda t$-red, and the pair $(R_1, R_2)$ is $\lambda t$-blue, and
    \item $\vass{T}\leq ht$.
\end{enumerate}
\end{restatable}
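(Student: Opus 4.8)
\emph{The plan} is to reduce, via an iterated application of Ramsey's theorem, to a structural statement about monochromatic cliques and their triangle-connections, in the spirit of~\cite{ABS} but retaining the cliques of the minority colour rather than discarding them. Fix a large constant $s$ depending on $h$ and $\lambda$. Since $G$ has at most $\eps t$ non-neighbours at every vertex, as long as $\Omega_s(\eps t)$ vertices remain one can extract a monochromatic $K_s$ (first a $K_{R(s,s)}$ in $G$, then Ramsey inside it); iterating partitions $V(G)$ into monochromatic cliques $Q_1,\dots,Q_m$ on exactly $s$ vertices each, with $m=\Theta(t/s)$, plus a remainder $L$ of size $O_s(\eps t)$ which — after imposing $\eps_0\ll h/R(s,s)$ — is small enough to be placed into $T$. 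We record the internal colour of each clique, and recall the basic facts from~\cite{ABS}: a monochromatic $K_s$ carries a monochromatic TCTF on $s-O(1)$ of its vertices; and if $uv$ is a red edge with $u\in Q_i$, $v\in Q_j$ such that $u$ has at least two red neighbours in $Q_j$ and $v$ has at least two red neighbours in $Q_i$, then — provided at least one of $Q_i,Q_j$ is internally red — $uv$ is red triangle-connected to all the red edges of that clique. In particular two internally-red cliques spanning more than $2s$ red edges between them are red triangle-connected, and a blue-clique vertex with two red neighbours in an internally-red clique $Q_i$ lies in the red triangle component of $Q_i$; the colour-swapped statements hold as well. Consequently a red triangle component whose internally-red cliques have total size $\ell s$ carries a red TCTF on at least $\ell s-2\ell\ge \ell s-2m$ vertices, and when the component instead has the shape of a small red ``core'' with two large blue ``sides'' attached (the relevant degenerate case below) its red TCTF has size roughly three times the core.

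\emph{The main argument.} Assume $G$ has no monochromatic TCTF on $3(1+\eps)t$ vertices. Then no red triangle component has internally-red cliques of total size exceeding $3(1+\eps)t+2m=3t+O(ht)$, and no red core-plus-two-blue-sides configuration has balanced weight exceeding $3t+O(ht)$; likewise in blue. Now run a counting argument on the cluster structure. Of the $(9-\eps)t-O_s(\eps t)$ clique-vertices, say $\rho\ge\beta$ lie in internally-red (total $\rho$) versus internally-blue (total $\beta$) cliques, so $\rho\ge(4.5-O(h))t$. These $\rho$ vertices distribute among the red triangle components of the internally-red cliques, each of total weight less than $3t+O(ht)$; comparing with $\rho$ forces at least two such components of weight $\ge(2-h)t$, and then — using that distinct, non-triangle-connected internally-red cliques are joined by almost-entirely-blue pairs, and that any blue clique that is ``$2$-red'' into a red component is swallowed by it — one shows there are \emph{exactly} two large red components, with internally-red cliques forming sets $\mathcal R_1,\mathcal R_2$, while all other internally-red cliques together span $O(ht)$ vertices. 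The symmetric statement produces two large blue components $\mathcal B_1,\mathcal B_2$. One checks $\mathcal B_1\ne\mathcal B_2$ as blue components (otherwise the combined tripartite-like blue structure, together with adjacent internally-red cliques, would yield an oversized blue TCTF), pairs $\mathcal B_i$ with $\mathcal R_i$ along their ``links'', and sets $B_i,R_i$ to be the corresponding vertex sets, $Z$ to be what remains, and $T$ to collect $L$, the $O(ht)$ small clique families, and the $o(t)$ atypical vertices identified next.

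\emph{Deriving (a)--(f).} Item (c) is immediate from the clique colours, and (f) is just the bookkeeping that all error terms are $O(ht)$, so it suffices to establish (a), (b), (d), (e) up to $o(t)$ exceptions moved into $T$. For (e): the pairs $(B_1,B_2)$ and $(R_1,R_2)$ are, by construction, pairs of large triangle components of the same colour that are \emph{not} triangle-connected to each other, so the contrapositive of the local fact above (with $s\gg 1/\lambda$) shows $(B_1,B_2)$ is $\lambda t$-red and $(R_1,R_2)$ is $\lambda t$-blue. For (d): if a blue clique $Q\subseteq B_1$ and a red clique $Q'\subseteq R_2$ had many blue edges between them, then few vertices of $Q$ would be $2$-red into $Q'$; summing such deficiencies over all clique pairs, compared against the near-maximal weights of the four large components, forces each of the eight bipartite patterns of (d) to hold outside an $o(t)$ set. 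Finally (a) and (b): with the colour pattern fixed, the TCTF upper bound $3t+O(ht)$ applied to each of the boundedly many monochromatic components present — each of which equals a minimum of two or three of the part-sizes up to $O(ht)$ — together with $\vass{B_1}+\vass{B_2}+\vass{R_1}+\vass{R_2}+\vass Z+\vass T=(9-\eps)t$ and the minimum-degree hypothesis, is a small linear system whose only solutions (given, say, $\eps\le 5h$) have $\vass{B_1},\vass{B_2},\vass{R_1},\vass{R_2}\in[(2-h)t,(2+h)t]$ and $\vass Z\in[(1-h)t,(1+h)t]$.

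\emph{The main obstacle} is the middle step: proving that there are exactly two large red and two large blue triangle components, that all remaining cliques contribute only $O(ht)$ vertices, and that the four large components link up precisely as in the extremal example. This is where one genuinely has to exploit and control the interaction between cliques of different colours — a single blue clique can be ``$2$-red'' into several distinct red components at once, and ruling out the resulting diffuse configurations while simultaneously keeping every error set inside the $h$- and $\lambda$-budgets is the delicate part. The remaining steps are either quoted from~\cite{ABS} or routine counting.
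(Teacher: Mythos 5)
Your overall strategy (iterated Ramsey to decompose into monochromatic cliques, the triangle-connection facts between cliques, and then an analysis of the large monochromatic components) is indeed the route the paper takes. However, there are two genuine gaps. The first is in your middle step. You argue that since the internally-red cliques carry $\rho\ge(4.5-O(h))t$ vertices and each red triangle component has weight less than $3t+O(ht)$, ``comparing with $\rho$ forces at least two such components of weight $\ge(2-h)t$.'' This does not follow: three components of weight $1.5t$ each also sum to $4.5t$. Ruling such configurations out is precisely where the colour interaction must be exploited, and it is the bulk of the paper's work: one first shows (by finding a dense \emph{red} graph on the union of many small \emph{blue} components and applying a Corr\'adi--Hajnal-type statement) that if all blue components are small then their union is at most about $\tfrac72 t$, then that no component exceeds $\tfrac73 t$, that no two exceed $2t$, that $\vass{B_2}\ge\vass{B_{\ge3}}$, and finally — via a long case analysis on which of $B_{\ge3}$, $R_{\ge3}$ is small and whether the various components are triangle-connected to the tripartite structures formed by the opposite colour — that no component exceeds $(2+h)t$. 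Your sketch acknowledges this is ``the delicate part'' but the one quantitative argument you offer for it is incorrect, and the ``small linear system'' you invoke for (a) and (b) presupposes the sharp $(2+h)t$ upper bound that is the hardest single step.

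The second gap concerns the exactness of (c) and (d). Item (c) is \emph{not} immediate from the clique colours: $B_1$ is a union of blue cliques, and edges \emph{between} distinct cliques of $B_1$ need not be blue. Likewise, establishing (d) ``outside an $o(t)$ set'' and moving those vertices to $T$ does not give the conclusion that \emph{all} remaining edges have the right colour — after pruning, every remaining vertex has few wrong-coloured neighbours, but individual wrong-coloured edges can survive. The paper closes this with a separate argument: after pruning, any single wrong-coloured edge (say a red edge $uv$ inside $B_1$) is shown to be red triangle-connected both to the red cliques of $R_2$ and to the large red TCTF on $(B_1,B_2,Z)$, merging two red structures into a TCTF exceeding $3(1+\epsilon)t$ vertices — a contradiction. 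Some version of this ``one bad edge already kills you'' argument is needed and is absent from your proposal.
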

We will prove this lemma in Sections~\ref{sec:genset}--\ref{sec:finproof}.

By applying the Regularity Method in a standard way, we are able to upgrade Lemma~\ref{MainLemma} to the following superficially similar statement, in which we replace TCTF with the square of a path and cycle. We could generalise the following lemma to nearly-complete graphs easily (as in Lemma~\ref{MainLemma}), but we do not need it for the proof.

\begin{restatable}{lemma}{RegResult}\label{regularityresult} For every $\alpha>0$ there exists $\delta>0$ and $n_0\in\mathbb{N}$ such that for every $n>n_0$ the following holds. Let $N\geq (9-\delta)n$, and let $G$ be a $2$-edge-colouring of $K_N$. Then either $G$ contains both a monochromatic copy of $P_{3n+2}^2$ and of $C_{3n}^2$, or we can partition $V(G)$ into sets $X_1, X_2, Y_1, Y_2, Z$ and $R$ such that the following hold.
\begin{enumerate}[label=(\alph*)]
 \item\label{conditionA} $(2-\alpha)n\leq \vass{X_1}, \vass{X_2},\vass{Y_1},\vass{Y_2}\leq (2+\alpha)n$,
 \item\label{conditionB} $(1-\alpha)n\leq \vass{Z}\leq (1+\alpha)n$,
 \item\label{conditionC} $\vass{R}\leq \alpha n$,
 \item Vertices in the following pairs have at most $\alpha n$ red neighbours in the opposite part: $(X_1, Y_1), (X_2, Y_2), (Y_1, Y_2), (Y_1, Z)$ and $(Y_2, Z)$,
 \item Vertices in the following pairs have at most $\alpha n$ blue neighbours in the opposite part: $(X_1, X_2), (X_2, Y_1), (X_1, X_2), (X_1, Z)$ and $(X_2, Z)$,
 \item Vertices in $X_1$ and $X_2$ have at most $\alpha n$ red neighbours in their own part,
 \item\label{conditionG} Vertices in $Y_1$ and $Y_2$ have at most $\alpha n$ blue neighbours in their own part.
\end{enumerate}
\end{restatable}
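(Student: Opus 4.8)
The plan is to deduce this from Lemma~\ref{MainLemma} by the regularity method. We may assume $N\le(9+\alpha)n$: if $N>(9+\alpha)n$, then running the argument below using \emph{all} the clusters of a regular partition lands us either in the first alternative of Lemma~\ref{MainLemma} or in a structure whose monochromatic components already contain a monochromatic triangle-connected triangle factor on $\gtrsim N/3>3n+2$ clusters' worth, and in both cases the Blow-up Lemma produces the required monochromatic $P_{3n+2}^2$ and $C_{3n}^2$. So assume $N\le(9+\alpha)n$. First apply a two-colour Szemer\'edi Regularity Lemma to $G$, with regularity parameter $\epsilon'$ and density parameter $d$ tiny in terms of the $h,\lambda$ to be used in Lemma~\ref{MainLemma}, then clean (deleting clusters in too many irregular pairs) and take at least $10/\alpha$ parts: this yields an exceptional set of at most $dN$ vertices and $m$ clusters of common size $L\le\alpha n$, each in at most $\epsilon'm$ irregular pairs. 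Since every pair has total density $1$, one colour has density at least $\tfrac12$ on each regular pair; colouring each regular pair by such a colour gives a nearly complete $2$-edge-coloured reduced graph $\cal R$ of minimum degree at least $(1-\epsilon')m$. Finally apply Lemma~\ref{MainLemma} to the subgraph of $\cal R$ induced on a set of $(9-\epsilon)t$ clusters chosen so that $3tL$ is just above $3n$; this is possible because $N\ge(9-\delta)n$ with $\delta\ll\epsilon$ forces $(9-\epsilon)t\le m$, and any such induced subgraph has minimum degree at least $(9-\epsilon)t-\epsilon'm\ge(9-2\epsilon)t$ once $\epsilon'\ll\epsilon$.

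In the first alternative, Lemma~\ref{MainLemma} returns a monochromatic, say red, triangle-connected triangle factor $F$ covering at least $3(1+\epsilon)t$ clusters. Here I would invoke the by-now-standard transference step: after the cleaning, the clusters of $F$ are pairwise super-regular along the edges of $F$ and of the triangle component containing it, so the triangles of $F$ can be linked together through auxiliary triangles of that component into the cluster skeleton of a long squared path and of a long squared cycle, which the Blow-up Lemma then fills in, giving in $G$ a red $P^2$ and a red $C^2$ on all but an $o(1)$-fraction of the $\ge3(1+\epsilon)tL$ vertices in the clusters of $F$. Since $3(1+\epsilon)tL>3n+2$ for $n$ large, this produces the required monochromatic $P_{3n+2}^2$ and $C_{3n}^2$.

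In the second alternative, Lemma~\ref{MainLemma} returns a partition of the chosen clusters into classes playing the roles of $B_1,B_2,R_1,R_2,Z,T$. Let $X_i$, $Y_i$ and $Z$ be the unions of the $B_i$-, $R_i$- and $Z$-clusters, and let $R$ consist of every other vertex of $G$ --- the $T$-clusters, the unused and deleted clusters, the exceptional set, and, in each used cluster, the bounded fraction of vertices whose colour degrees deviate too much from the pattern below. If some class is so large that the associated monochromatic component of the structure --- for instance the red clique on the $R_1$-clusters together with a matching into distinct $B_2$-clusters, which spans about $\tfrac32|R_1|$ clusters --- contains a monochromatic triangle-connected triangle factor on more than $3n+2$ clusters' worth, I would blow it up as in the previous paragraph and output the resulting monochromatic square; otherwise all classes are small, and conditions (a)--(c) follow from Lemma~\ref{MainLemma}(a),(b) and $|T|\le ht$ together with $3tL\approx3n$ and $h\ll\alpha$. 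Conditions (d)--(g) then follow from the colour pattern of $\cal R$ in Lemma~\ref{MainLemma}(c)--(e), once one knows that the edges of $\cal R$ across the structured pairs are not merely majority- but almost entirely monochromatic. For this I would run a short version of the transference step: were many structured cluster pairs to have density at least a fixed $\eta$ in the ``wrong'' colour, then pairing each such pair with a common neighbour cluster in that colour and invoking the counting and blow-up lemmas would create enough extra monochromatic triangles --- triangle-connected to the large monochromatic components already present in the structure --- that transference would again yield a monochromatic $P_{3n+2}^2$ and $C_{3n}^2$, which we would output. Hence only a small fraction of clusters lie in many such pairs; moving those into $R$ (a further $<\alpha n$ vertices, since $\eta\ll\alpha$) makes every structured pair $\eta$-almost-monochromatic, and (d)--(g) follow by a short averaging argument.

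The main obstacle is the transference step in the first alternative: converting an abstract triangle-connected triangle factor in $\cal R$ into an explicit cluster skeleton of a squared path and a squared cycle that the Blow-up Lemma can complete, while wasting only an $o(1)$-fraction of the vertices so that the full $3n+2$ (respectively $3n$) vertices are genuinely covered. This is the delicate --- if by now fairly routine --- part of the argument; the rest (the parameter hierarchy, the disposal of the exceptional, irregular and non-conforming vertices into $R$, and the translation of the cluster-graph colour pattern into (d)--(g)) is bookkeeping.
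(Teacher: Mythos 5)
Your overall architecture (regularity, apply Lemma~\ref{MainLemma} to a reduced graph, transfer a TCTF back via the Blow-up/Embedding machinery, and translate the cluster partition into the vertex partition) matches the paper's. The genuine gap is in the step you yourself flag as needing work: upgrading the \emph{majority}-coloured reduced graph to the almost-monochromatic pair densities that conditions (d)--(g) require. Your proposed ``boost'' argument says that if many structured cluster pairs have density at least $\eta$ in the wrong colour, the resulting extra monochromatic triangles would push past $3n+2$. But you never confront the quantitative point that makes this delicate: in the second alternative the large red TCTF $(B_1,B_2,Z)$ covers only about $3|Z|L\ge 3(1-h)tL\approx 3(1-h)n$ vertices, i.e.\ it is already short of the target by $\Theta(hn)$, while each vertex-disjoint wrong-coloured cluster pair contributes only $O(L)=O(\eps N)$ extra vertices. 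So a bounded number --- indeed anything up to $\Theta(hm^2)$ --- of wrong-density pairs cannot be ruled out this way, and your conclusion that ``moving those into $R$ \dots makes every structured pair $\eta$-almost-monochromatic'' is not what the argument delivers: at best you can bound the \emph{number} of bad pairs by $O(hm^2)$ and then argue that after discarding the $O(\sqrt{h}m)$ clusters lying in more than $\sqrt{h}m$ bad pairs, each remaining vertex sees at most $\sqrt{h}mL+\eta N+\dots$ wrong-coloured neighbours, which forces the extra hierarchy $h\ll\alpha^2$ and a per-cluster counting argument you do not supply. Without that computation the deduction of (d)--(g) does not go through.

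For comparison, the paper sidesteps this entirely by defining the reduced graph with threshold $1-d$ (an edge is red only if its red density is at least $1-d$, blue only if its blue density is at least $1-d$, and \emph{purple} otherwise) and proving Lemma~\ref{MainLemmaWithPurple}: apply Lemma~\ref{MainLemma} once with all purple edges recoloured red and once with them recoloured blue, and intersect the two partitions. A short argument shows the two partitions essentially coincide, so the structured pairs are entirely non-purple and monochromatic at the cluster level with the strong $1-d$ threshold; conditions (d)--(g) then follow from a routine regularity/averaging step (at most $\eps|X_1'|$ vertices of a part send $\alpha n$ wrong-coloured edges to another part). If you want to keep your majority-colour route you must carry out the counting sketched above; otherwise the double-application-and-intersection trick is the cleaner repair.
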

We deduce this lemma from Lemma~\ref{MainLemma} in Section~\ref{sec:reg}.

To complete the proof of Theorem~\ref{thm:main}, we need to show that a complete graph which can be partitioned as in the above Lemma~\ref{regularityresult} and which has $9n-3$ vertices necessarily contains both a monochromatic $P_{3n+1}^2$ and $C_{3n}^2$; and $9n+1$ vertices suffices for $P_{3n+2}^2$. We do this in Section~\ref{sec:exact}.

Finally, to prove Theorem~\ref{thm:boundBW} it suffices to observe that if $G$ satisfies the conditions of Lemma~\ref{MainLemma} and can be partitioned as in that lemma, then it contains a monochromatic TCTF on nearly $3t$ vertices. Together with a standard application of the Regularity Method, which we sketch in Section~\ref{sec:reg}, this completes the proof of Theorem~\ref{thm:boundBW}.

\section{Preliminary lemmas}
In this section we prove some basic Ramsey-theoretic results which we will need to prove Lemma~\ref{MainLemma}, but for which we do \emph{not} assume the conditions of Lemma~\ref{MainLemma}.

\begin{lemma}\label{lemmasteplowerbound1}
There exist $\epsilon_0>0$ and $t\in \bb{R}$ such that the following holds for any $0<\epsilon<\epsilon_0$ and $t>t_0$. Let $G$ be a graph on at least $2(1+3\epsilon)t$ vertices with minimum degree at least $\vass{G}-\epsilon t$. Any 2-edge-colouring of the edges of $G$ contains a red matching on $2(1+\epsilon)t$ vertices or a blue connected matching on $\min\llb \vass{G}-(1+2\epsilon)t, 2\vass{G}-4(1+2\epsilon)t \rrb$ vertices.
\end{lemma}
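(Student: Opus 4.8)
\emph{Proof proposal.} Suppose $G$ admits no red matching covering $2(1+\epsilon)t$ vertices. Fix a maximum red matching $M$, put $U=V(G)\setminus V(M)$ and $\mu=|V(M)|$, so $\mu<2(1+\epsilon)t$ and $|U|=|G|-\mu>4\epsilon t$. By maximality of $M$ the set $U$ is independent in red, hence every edge of $G$ inside $U$ is blue; together with $\delta(G)\ge|G|-\epsilon t$ this makes $G[U]$ a nearly complete blue graph, with blue minimum degree at least $|U|-\epsilon t-1>|U|/2$. Consequently $G[U]$ is connected (so lies in one blue component $\mathcal C$ of $G$) and, by Dirac, has a blue matching on at least $|U|-1$ vertices. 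Next I record the structure of $M$: if some edge $xy\in M$ had both endpoints with at least two red neighbours in $U$, we could pick distinct $u,u'\in U$ with $ux,u'y$ red and augment along the red path $u\,x\,y\,u'$, contradicting maximality. So each edge of $M$ has a \emph{good} endpoint, one with at most one red neighbour in $U$; such a vertex has at least $|U|-\epsilon t-1$ blue neighbours in $U$, hence lies in $\mathcal C$. Let $X$ be a set of one good endpoint per edge of $M$: then $|X|=\mu/2$, $X\subseteq\mathcal C$, and every vertex of $X$ has at least $|U|-\epsilon t-1$ blue and at most one red neighbour in $U$.

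Now I build the required blue connected matching. Let $M^{*}$ be a maximum matching of the blue graph induced on $U\cup X$; all its edges lie in $\mathcal C$, so it is a blue connected matching and it suffices to bound $|M^{*}|$ from below. The set $S$ of vertices it leaves unmatched is blue-independent; since $G[U]$ is nearly complete, $|S\cap U|\le\epsilon t+1$. If $|S\cap X|\ge 2$, fix $x_1,x_2\in S\cap X$; for any edge of $M^{*}$ with both ends in $U$ and both ends blue-adjacent to both $x_i$ we could augment through $x_1,x_2$, so all but $O(\epsilon t)$ of the $U$–$U$ edges of $M^{*}$ are avoided by some $x_i$, forcing there to be only $O(\epsilon t)$ such edges; combined with $|S\cap U|\le\epsilon t+1$ this means at least $|U|-O(\epsilon t)$ vertices of $U$ are matched by $M^{*}$ to vertices of $X$, whence $|S\cap X|\le|X|-|U|+O(\epsilon t)$. (This last step is where the scarcity of red $X$–$U$ edges matters, via the equivalent defect-Hall/König estimate: a vertex of $U$ with no blue neighbour in $X$ would be red-joined to almost all of $X$, which only $O(\epsilon t)$ vertices of $U$ can be.) In every case $|S|\le\max(\epsilon t+2,\ |X|-|U|+O(\epsilon t))$, so
\[
|M^{*}| \;=\; |X|+|U|-|S| \;\ge\; \min\bigl(|X|+|U|-O(\epsilon t),\ 2|U|-O(\epsilon t)\bigr)\;=\;\min\bigl(|G|-\tfrac{\mu}{2},\ 2|G|-2\mu\bigr)-O(\epsilon t).
\]

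Finally, since $\mu<2(1+\epsilon)t$ we have $|G|-\tfrac{\mu}{2}>|G|-(1+\epsilon)t$ and $2|G|-2\mu>2|G|-4(1+\epsilon)t$, so the quantity above is at least $\min\bigl(|G|-(1+2\epsilon)t,\ 2|G|-4(1+2\epsilon)t\bigr)$ once the $O(\epsilon t)$ error is absorbed into the $\epsilon t$-sized gaps — this is where the hypotheses $\epsilon<\epsilon_0$ and $t>t_0$ are used, together with the slack between $2(1+3\epsilon)t$ in the hypothesis and $2(1+2\epsilon)t$ in the conclusion. The main obstacle is the second paragraph: proving that the blue matching has size essentially $\min(|X|+|U|,2|U|)$, i.e. that almost all of the smaller of $X$ and $U$ can be matched into the larger even when $|X|$ and $|U|$ are comparable (so that the deficiency stays $O(\epsilon t)$), and carrying the additive $\epsilon t$-bookkeeping tightly enough that it fits inside the available slack.
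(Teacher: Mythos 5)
Your overall strategy is the paper's: a maximum red matching $M$, the observation that $V(G)\setminus V(M)$ spans only blue edges and that each edge of $M$ has an endpoint with at most one red neighbour outside $M$, a blue matching built from these good endpoints together with the outside set, and connectivity obtained through the large, nearly complete, blue outside set. Where you diverge is in how the blue matching is produced and counted: the paper builds it greedily in two phases (first matching the good endpoints into $Y=V(G)\setminus V(M)$, and only if these are exhausted adding $Y$--$Y$ edges), so that in the branch where $Y$ is the bottleneck every matching edge has exactly one vertex in $Y$; you instead take a maximum blue matching $M^{*}$ on $U\cup X$ and bound its deficiency set $S$ via augmenting paths.

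The gap is exactly where you flagged it: the $O(\epsilon t)$ bookkeeping in the branch $|S\cap X|\ge 2$ does not fit inside the available slack. Your augmenting argument only shows that every $U$--$U$ edge of $M^{*}$ contains a non-blue-neighbour of $x_1$ or of $x_2$, hence that there are at most $2(\epsilon t+1)$ such edges; each one covers two vertices of $U$ while contributing two rather than four vertices to $M^{*}$, so together with the at most $\epsilon t+1$ unmatched vertices of $U$ you only obtain $|V(M^{*})|\ge 2|U|-6\epsilon t-O(1)$. Since $2|U|=2|G|-2|V(M)|>2|G|-4(1+\epsilon)t$, the slack down to the target $2|G|-4(1+2\epsilon)t$ is only $4\epsilon t$, so this falls short by roughly $2\epsilon t$, and the assertion that the error ``is absorbed into the $\epsilon t$-sized gaps'' is false for the constants your argument actually delivers. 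This is repairable within your framework in either of two ways: sharpen the augmenting-path analysis (a surviving $U$--$U$ edge must in fact have both endpoints in the non-blue-neighbourhood of $x_1$, or both in that of $x_2$, or an endpoint in their intersection, which caps the number of such edges at $\epsilon t+1$ and brings the total loss down to $4\epsilon t+O(1)$); or, more simply, lower-bound the maximum matching $M^{*}$ by exhibiting the paper's two-phase greedy matching, which uses $U$--$U$ edges only after all of $X$ is matched and therefore never wastes vertices of $U$ in the bottleneck case. As written, though, the decisive inequality is asserted rather than proved.
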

\begin{proof}
Let $M$ be the largest red matching in $G$ and let $Y=V(G)\setminus M$. We may assume that $M$ has at most $2(1+\epsilon)t$ vertices. Since $M$ is maximal, every edge in $M$ has one endpoint with at most one red neighbour in $Y$. Indeed, if $xy\in M$ and both $x$ and $y$ have at least two neighbours in $Y$ we can take $x'$ in $Y$ adjacent to $x$ and $y'$ distinct from $x'$ adjacent to $y$ in $Y$, and obtain a red matching which is larger than $M$ by substituting $xy$ with $x'x$ and $y'y$.

Let $S$ be the set of vertices in $M$ with at most one red neighbour in $Y$. We can now form a blue matching $P$ (that we are going to show is connected) by greedily matching vertices in $S$ with blue neighbours in $Y$. We claim that $P$ has at least $\min\llb \vass{S}, \vass{G}-\vass{M}-2\epsilon t \rrb$ edges. Indeed, since the process is greedy we stop only by finishing all the vertices of $S$ or when $S\setminus P$ is not empty, but no vertex in $S\setminus P$ has a blue neighbour in $Y\setminus P$, and this means that there are less than $2\epsilon t$ vertices not yet covered by $P$ in $Y$. 

If we stopped for the first reason (if $\vass{S}<\vass{G}-\vass{M}-2\epsilon t$) we can extend $P$ to a larger blue matching $P'$: the induced graph over $Y$ contains only blue edges by maximality of $M$ and there are some edges left in $Y\setminus P$. This extension of $P$ can continue at least until all but $\epsilon t$ vertices in $Y$ are covered: we stop only when all edges in $Y$ have one vertex covered by $P'$. Therefore we have
\begin{align*}
  \vass{V(P')}&\geq \overbrace{2\vass{S}}^{\text{in }P}+\overbrace{\vass{Y}-\vass{S}-\epsilon t}^{\text{in }Y}\\
  &\overbrace{\geq}^{2\vass{S}\geq \vass{M}} \vass{G}-\frac{\vass{M}}{2}-\epsilon t\\
  &\geq \vass{G}-(1+2\epsilon)t\,,
\end{align*}
as desired.

If on the other hand we stopped because no vertex in $S\setminus P$ has a blue neighbour in $Y\setminus P$ (but $S\setminus P$ is not empty). In particular, by definition of $S$ this means that every vertex in $S\setminus P$ has at most one neighbour in $Y\setminus P$. This can only happen if $\vass{Y \setminus P}<2\epsilon t$ and hence all but at most $2\epsilon t$ vertices of $Y$ are covered by $P$. This means that the size of $P$ is at least
\begin{align*}
    \vass{P}&\geq 2 (\vass{Y}-2\epsilon t)\\
    &\geq 2(\vass{G}-\vass{M}-2\epsilon t)\\
    &\geq 2(\vass{G}-2(1+\epsilon)t-2\epsilon t)\\
    &=2\vass{G}-4(1+2\epsilon)t\,,
\end{align*}
as desired.

In order to conclude, we must now argue that the matching $P$ (or $P'$) we obtained is blue connected. But this is the case, indeed, every edge of $P$ (or $P'$) has at least one vertex in $Y$. Indeed $\vass{Y}=\vass{G}-\vass{M}\geq 4\epsilon t$ and all edges in $Y$ are blue. By the minimum degree of $G$ each vertex of $Y$ is non-adjacent to at most $\epsilon t$ vertices of $Y$, so any pair of vertices of $Y$ has a common neighbour in $Y$, and therefore $Y$ is blue-connected.
\end{proof}

\begin{lemma}\label{lemmatwothirdsappe}
Let $G$ be a graph with minimum degree strictly greater than $\frac{2}{3}\vass{G}$. Then all the edges of $G$ are triangle connected. Moreover, there exists a TCTF on all but at most $2$ vertices of $G$.
\end{lemma}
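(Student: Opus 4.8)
The plan is to prove the two assertions in turn, the second following easily from the first together with a classical extremal result. Throughout write $n=\vass{G}$; note that the hypothesis $\delta(G)>\tfrac23 n$ forces $n\ge 4$ (otherwise a vertex would need degree more than two in a graph on at most three vertices), so in particular $G$ has at least one edge.

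For the first assertion the key point is a common-neighbour count. For any three vertices $a,b,c$ we have $\vass{N(a)\cap N(b)\cap N(c)}\ge \deg a+\deg b+\deg c-2n>0$, and for any two vertices $\vass{N(a)\cap N(b)}\ge\deg a+\deg b-n>\tfrac13 n$. First I would check that two edges sharing a vertex are triangle-connected: given edges $uv$ and $uw$ with $v\neq w$, choose $x\in N(u)\cap N(v)\cap N(w)$; then $uvx$ and $uwx$ are triangles, so $uv$ is triangle-connected to $ux$ and $ux$ to $uw$. For arbitrary edges $uv$ and $xy$, pick $w\in N(v)\cap N(x)$ and consider the chain of edges $uv,\,vw,\,wx,\,xy$: consecutive members share an endpoint, so by the previous case and transitivity $uv$ and $xy$ lie in the same triangle component. (Coincidences among $u,v,w,x,y$ only shorten the chain and cause no difficulty.) Hence all edges of $G$ form a single triangle component.

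For the second assertion, observe that since every edge of $G$ lies in this single triangle component, \emph{any} triangle factor of $G$ is automatically a TCTF; so it suffices to produce a triangle factor covering all but at most two vertices. Here I would invoke the Corr\'adi--Hajnal theorem (every graph on $3k$ vertices with minimum degree at least $2k$ contains $k$ vertex-disjoint triangles), distinguishing the residue of $n$ modulo $3$. If $n=3m$, then $\delta(G)>2m$ gives $\delta(G)\ge 2m$ and we obtain a perfect triangle factor. If $n=3m+1$, then $\delta(G)>2m+\tfrac23$ forces $\delta(G)\ge 2m+1$, so deleting any one vertex leaves a graph on $3m$ vertices with minimum degree at least $2m$, hence a triangle factor on $n-1$ vertices. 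If $n=3m+2$, then $\delta(G)>2m+\tfrac43$ forces $\delta(G)\ge 2m+2$, so deleting any two vertices leaves a graph on $3m$ vertices with minimum degree at least $2m$, hence a triangle factor on $n-2$ vertices. In every case this triangle factor is a TCTF on all but at most two vertices.

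There is no substantial obstacle: the content of the first part is the elementary common-neighbour inequality, and the second part is bookkeeping once Corr\'adi--Hajnal is applied. The only point needing a little care is that the strict bound $\delta(G)>\tfrac23 n$, together with integrality of degrees, still meets exactly the threshold $\tfrac23(n-r)$ required after deleting $r\in\{1,2\}$ vertices to make the order divisible by $3$.
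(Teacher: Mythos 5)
Your proof is correct and follows essentially the same route as the paper: common neighbours of any two or three vertices guaranteed by the degree condition give triangle-connectedness of all edges, and Corr\'adi--Hajnal supplies the triangle factor. You simply spell out the chain argument and the mod-$3$ bookkeeping that the paper leaves implicit.
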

\begin{proof}
We may notice that every three vertices of $G$ share a common neighbour by the minimum degree condition and the pigeonhole principle. This means that any couple of adjacent edges is triangle connected in a trivial way, and this property implies that connected components and triangle-connected components coincide in $G$ (because of the minimum degree condition we have that $G$ is connected and therefore every couple of edges is triangle connected).
The existence of the TCTF is given by a theorem of Corradi and Hajnal \cite{CorradiHajnal1963}.
\end{proof}

\begin{lemma}\label{lemmaconnmatchtctf}
There exist $\epsilon_0>0$ and $t\in\bb{R}$ such that the following holds for any $0<\epsilon<\epsilon_0$, any $t>0$. Let $G$ be a graph on at least $(5+100\epsilon)t$ vertices with minimum degree at least $\vass{G}-\epsilon t$. Any 2-edge-colouring of the edges of $G$ contains a red connected matching over $2(1+\epsilon)t$ vertices or a blue TCTF on $3(1+\epsilon)t$ vertices.
\end{lemma}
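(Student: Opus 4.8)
The plan is to prove the contrapositive shape: assuming $G$ has no red connected matching on $2(1+\epsilon)t$ vertices, I will produce a blue TCTF on $3(1+\epsilon)t$ vertices. First I take a maximum red matching $M$ and set $Y=V(G)\setminus V(M)$. Since $M$ is maximum, $Y$ spans no red edge, so by the minimum degree hypothesis $G[Y]$ restricted to its blue edges has minimum degree at least $|Y|-\epsilon t$. If $|Y|\geq 3(1+\epsilon)t+2$ this exceeds $\tfrac23|Y|$, so Lemma~\ref{lemmatwothirdsappe} gives a blue TCTF on at least $|Y|-2\geq 3(1+\epsilon)t$ vertices and we are done (this is essentially the content of Lemma~\ref{lemmasteplowerbound1} in the branch where the red matching is small). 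So I may assume $|Y|<3(1+\epsilon)t+2$, whence $\nu(G^{\mathrm{red}})=\tfrac12|V(M)|>(1+\epsilon)t$; and I may assume no red component has a red matching on $2(1+\epsilon)t$ vertices (otherwise that matching is the desired red connected matching). Hence the red edges of $M$ are spread over $m\geq 2$ red components $C_1,\dots,C_m$, each with $\nu(C_i^{\mathrm{red}})<(1+\epsilon)t$.

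Next I record the structure I will use. Between distinct components $C_i,C_j$ there are no red edges, so all but at most $\epsilon t$ edges per vertex there are blue. Inside each $C_i$, fix a maximum red matching $M_i$ and put $B_i=C_i\setminus V(M_i)$: this is red-independent (hence blue-complete up to non-edges), with $|B_i|=|C_i|-2\nu_i$, and a short augmenting-path argument shows that, apart from a subset of $V(M_i)$ whose size is controlled by $\nu_i$, each matched vertex sends no red edge into $B_i$ — so a large "clean" part $A_i^{g}\subseteq V(M_i)$ is blue to all of $B_i$ up to non-edges. The key point is that $|G|\geq(5+100\epsilon)t$ together with $|V(M_i)|<2(1+\epsilon)t$ forces $\sum_i|B_i|=|G|-\sum_i|V(M_i)|$ to be substantial: in particular $|B_1|+|B_2|>(1+96\epsilon)t$ when $m=2$, and more when $m\geq 3$. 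This $100\epsilon$ slack is what will absorb all the $O(\epsilon t)$ error terms below.

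Finally I build the blue TCTF from blue triangles of a few types: triangles lying entirely inside the red-independent set $Y=\bigcup_i B_i$; ``apex'' triangles consisting of one vertex of some $V(M_i)$ with two vertices of $B_j$ for $j\neq i$; ``rainbow'' triangles with one vertex in each of three distinct components (available once $m\geq 3$); and, when $m=2$ with both $|V(M_i)|$ near $2(1+\epsilon)t$, triangles joining one vertex of $A_1^{g}$, one of $A_2^{g}$ and one of $B_1\cup B_2$, all of whose edges are blue by the structure above. I split into the regimes ``one component dominates'', ``$m\geq 3$'', and ``$m=2$ with comparable components'', and in each I check, using the size bounds of the previous paragraph, that at least $(1+\epsilon)t$ pairwise-disjoint such triangles can be packed, and that the collection is triangle-connected: the latter by applying Lemma~\ref{lemmatwothirdsappe} to the relevant blue host graph (the blue graph on $Y$, or on $B_i\cup A_i^{g}$, or on a union of components linked through their blue cross-edges), each of which has minimum degree above two thirds of its order, so all its edges — in particular all edges of our triangles — lie in one blue triangle component. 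I expect the main obstacle to be exactly this triangle-connectivity verification in the $m=2$ regime: there is no single blue-dense host set, and one must instead exploit the forced near-completeness of the blue bipartite graph between $B_i$ and $A_i^{g}$ to glue the cross-triangles to the internal ones, while carefully tracking the $O(\epsilon t)$ exceptional vertices — which is precisely where the hypothesis $(5+100\epsilon)t$ rather than $5t$ is indispensable.
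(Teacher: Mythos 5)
Your skeleton is essentially the paper's: both proofs decompose $V(G)$ by red components, exploit that a maximum red matching leaves a red-independent (hence blue-dense) remainder and that all edges between distinct red components are blue, and use Lemma~\ref{lemmatwothirdsappe} (Corr\'adi--Hajnal plus the two-thirds triangle-connectivity observation) to pack and connect blue triangles. The preliminary reductions are fine: the deduction that $\nu(G^{\mathrm{red}})>(1+\epsilon)t$ when $|Y|$ is small, the split over $m$ matching-bearing components, and the observation that one endpoint of each matching edge is essentially blue to $B_i$ are all correct.

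The gap is that the two steps which constitute the actual content of the lemma are asserted rather than carried out, and you flag one of them yourself as an ``expected obstacle''. First, the packing count: you list four triangle types and claim that ``in each [regime] I check\dots that at least $(1+\epsilon)t$ pairwise-disjoint such triangles can be packed'', but this check is exactly where the case analysis lives, and it is delicate --- e.g.\ when $m=2$ with $\nu_1=\nu_2\approx(1+\epsilon)t$ one has $|B_1\cup B_2|$ only a little over $t$, every one of your triangle types consumes at least one vertex of $Y$, and the count closes only because $|A_i^g|\ge\nu_i$ is itself close to $(1+\epsilon)t$; in unbalanced configurations a different mix of types is needed. Second, and more seriously, the triangle-connectivity in the two-component regime: there is no single blue host set of minimum degree above two thirds, so Lemma~\ref{lemmatwothirdsappe} does not apply directly, and one must glue the cross-triangles through $B_1\cup B_2$ by hand. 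The paper's Case~3 resolves this with a concrete mechanism your proposal lacks: it extracts a blue \emph{connected} matching $P_i$ inside each component via Lemma~\ref{lemmasteplowerbound1}, extends each edge of $P_i$ by a vertex of the other component, shows each $P_i$ is triangle-connected because incident blue edges of $A_i$ share a common neighbour in $A_{3-i}$, and joins the two pieces by locating a blue $K_4$ spanning an edge of $P_1$ and an edge of $P_2$ (using that any set of more than half the vertices of $P_1$ contains a $P_1$-edge). Until you supply an argument of comparable precision for connecting your $A_1^{g}\times A_2^{g}\times(B_1\cup B_2)$ triangles to each other and to the internal $Y$-triangles, the proof is not complete.
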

\begin{proof} Without loss of generality, we may assume $G$ has $(5+100\epsilon)t$ vertices.
We separate cases.

    \underline{Case 1}: $G$ has a maximal red connected component $A$ that spans at least $(4+5\epsilon)t$ vertices.
    
    Let $M$ be the largest red matching in $A$. Since $A$ is a red connected component, we may assume $\vass{M}<2(1+\epsilon)t$. Since $M$ is a maximal red matching in $A$, we know that every edge in $A\setminus M$ is blue. 
    
    Because of our assumption on the size of $A$, we have that $\vass{A\setminus M}>(2+3\epsilon)t$. We construct a matching $P$ of size $2(1+\epsilon)t$ in $A\setminus M$ greedily, which is possible by the minimum degree of $G$. By Lemma \ref{lemmatwothirdsappe}, every pair of edges in $A\setminus M$ is blue triangle connected. In particular $P$ is blue triangle connected.
    
    We now greedily extend the edges of $P$ to blue triangles by taking vertices in $X=V(G)\setminus (P\cup M)$. We have no red edges from vertices of $X$ to vertices of $P$: if $x\in X$ is not in $A$, this is since $A$ is a red component, while if $x\in X\cap A$ then it is by maximality of $M$. We have $\vass{X}\ge (1+(k-4)\epsilon)t$, and by the minimum degree of $G$ any edge of $P$ makes a triangle with all but at most $2\epsilon t$ vertices of $X$, so the greedy extension succeeds.

\smallskip    
    
\underline{Case 2}: $G$ has a maximal red connected component $A$ that spans at least $3(1+2\epsilon)t$ but less than $(4+5\epsilon)t$ vertices. 

If $G$ has a red connected matching over $2(1+\epsilon)t$ vertices we are done, so we assume it does not. By Lemma \ref{lemmasteplowerbound1} applied to $A$, we obtain a blue connected matching $P$ in $A$ of size at least $2(1+\epsilon)t$. Now as in the previous case, we can greedily extend all the edges of $P$ to a blue triangle factor using vertices of $V(G)\setminus A$. Observe that every two blue adjacent edges in $A$ share a neighbour in $V(G)\setminus A$, therefore every blue connected component in $A$ is also blue triangle connected. In particular, $P$, and hence the blue triangle factor containing it, are triangle connected.
% Now we have:
%    \begin{itemize}
%        \item $(A, V(G)\setminus A)$ is blue by maximality of $A$,
%        \item Every two blue adjacent edges in $A$ share a neighbour in $V(G)\setminus A$, therefore every blue connected component in $A$ is also blue triangle connected. In particular, $P$ is triangle connected.
%    \end{itemize}
%    Also observe that we can greedily extend $P$ to a blue triangle factor using vertices in $V(G)\setminus A$ this triangle factor is going to have size at least $3(1+\epsilon)t$ and it is blue triangle connected because of what we saw above.

\smallskip    
    
\underline{Case 3}: $G$ has two maximal red connected components $A_1$ and $A_2$ covering at least $(5+12\epsilon)t$ vertices in total, and we are not in Cases 1 or 2.

Because we are not in Cases 1 or 2, $A_1$ and $A_2$ both span less than $3(1+2\epsilon)t$ vertices and hence at least $(2+6\epsilon)t$ vertices. In addition, neither component contains a red matching on $2(1+\epsilon)t$ vertices, because otherwise we would be done. Therefore, each $A_i$ contains a blue connected matching $P_i$ on precisely $\min\big(2\vass{A_i}-4(1+2\epsilon)t,2t\big)$ vertices by Lemma~\ref{lemmasteplowerbound1}. Indeed, for the possible values of $\vass{A_i}$, we have $2\vass{A_i}-4(1+2\epsilon)t<\vass{A_i}-(1+2\epsilon)t$. 
Observe that every edge between $A_1$ and $A_2$ is blue and therefore $P_1\cup P_2$ is a blue connected matching. We have $\vass{P_1},\vass{P_2}\ge 4\epsilon t$ and hence if $\vass{P_1}=2t$ we see that $P_1\cup P_2$ has at least $(1+2\epsilon)t$ edges. Similarly if $\vass{P_2}=2t$. If $\vass{P_1},\vass{P_2}<2 t$ then we have  at least $\vass{A_1}+\vass{A_2}-4(1+2\epsilon)t\geq (1+4\epsilon)t$ edges, in any case we have in $P_1\cup P_2$ at least $(1+2\epsilon)t$ edges. Let $Y_i=A_i\setminus P_i$.
We extend greedily the edges of $P_1$ to a set of disjoint blue triangles $T_1$ using vertices of $Y_2$, and in the same way we greedily extend the edges of $P_2$ to a set of disjoint blue triangles $T_2$ using vertices of $Y_1$. Note that $\vass{Y_i}=4(1+2\epsilon)t-\vass{A_i}> (1+2\epsilon)t$, and therefore we are able to extend the edges of $P_1\cup P_2$, so we obtain a blue triangle factor with at least $(1+\epsilon)t$ triangles.

It now suffices to show that the triangle factor $T_1\cup T_2$ is triangle connected. Because every two blue incident edges in $A_1$ share a neighbour in $A_2$ and vice versa, we have that both $T_1$ and $T_2$ are TCTFs. Without loss of generality we assume that $\vass{P_1}\leq \vass{P_2}$. We know that $\vass{P_1}=2\vass{A_1}-4(1+2\epsilon)t>4\epsilon$. Let $xy$ be an edge in $P_2$, because every edge between $A_1$ and $A_2$ is blue, and because of the minimum degree condition we have that $x$ and $y$ share at least $\vass{P_1}-2\epsilon t$ blue neighbours in $P_1$. Because $P_1$ has a blue matching, every set in $P_1$ of size strictly bigger than $\frac{\vass{P_1}}{2}$ has an edge from $P_1$. Therefore we have that there exists $zt$ in $P_1$ such that $G[\llb x, y, z, t \rrb]$ is a blue clique with $xy$ in $P_2$ and $zt$ in $P_1$. Because both $P_1$ and $P_2$ are triangle connected, we are done. 

\smallskip

\underline{Case 4}: $G$ is not in any of cases 1--3, i.e.\ there is no red component of size $3(1+2\epsilon)t$ or bigger, and no two red components cover $(5+12\epsilon)t$ or more vertices.

Let $A_1, A_2, \dots{}$ be the maximal red connected components, ordered by decreasing cardinality. We have $\vass{A_1}<3(1+2\epsilon)t$ and  $\vass{A_1}+\vass{A_2}<(5+12\epsilon)t$, and we can assume that $G$ does not have a red connected matching over $2(1+\epsilon)t$ vertices since otherwise we are done.

\begin{claim}
The set of blue edges of $G$ is triangle connected.
\end{claim}
\begin{claimproof}
%Because of the minimum degree condition, and because the edges between $A_i$ and $A_j$ are all blue whenever $i\neq j$, we see that $G$ is blue-connected.
%
%We claim $|A_1|+|A_2|+|A_3|\ge |G|-3\epsilon t$, and in particular $|A_3|\ge (k-15)\epsilon t$. Indeed, if $xy$ and $yz$ are any adjacent blue edges, then $x$, $y$ and $z$ lie in at most three red components which cover at most $|A_1|+|A_2|+|A_3|$ vertices: if $|A_1|+|A_2|+|A_3|<|G|-3\epsilon t$, then by the minimum degree condition on $G$ there is a vertex $z$ not in the components of any of $x$, $y$ or $z$ adjacent to all three. Thus any adjacent blue edges are blue triangle connected and since $G$ is blue-connected, it is also blue triangle connected.
%
Every blue edge in a component $A_i$ is in a blue triangle with some vertex in a different component $A_j$, so it suffices to prove that the edges between distinct components all lie in the same triangle-connected component. In particular, it is enough to show that for any $j,k\ge 2$ distinct, any  $a_1a_j$ an edge between $A_1$ and $A_j$, and any $b_jb_k$ an edge between $A_j$ and $A_k$, then $a_1a_j$ and $b_jb_k$ are triangle connected. This last equivalence is due to the fact that there are at least three red components (indeed, $\vass{V(G)}-\vass{A_1\cup A_2}>(k-12)\epsilon t$).

Given $a_1,a_j,b_j,b_k$ as above, let $c$ be a common blue neighbour of $a_1, a_j, b_j$ not in $A_1\cup A_j$. This exists by minimum degree condition and by considering that $a_1, a_j, b_j$ are all in $A_1\cup A_j$ and there are at least $(k-12)\epsilon t$ vertices not in $A_1\cup A_j$. Now let us take $d$ a common blue neighbour of $c, a_j, b_j, b_k$ in $A_1$: this exists since $c,a_j,b_j,b_k$ are not in $A_1$, and using the minimum degree condition. We can now conclude since $(a_1a_jc, a_jcd, cdb_j, db_jb_k)$ is a sequence of blue triangles that proves that $a_1a_j$ and $b_jb_k$ are triangle connected.
\end{claimproof}
Because we showed that every blue edge is triangle connected, it is sufficient to find $(1+\epsilon)t$ disjoint blue triangles. We are in one of the following cases.

\underline{Case A}: Both $A_1$ and $A_2$ are larger than $2(1+20\epsilon)t$.

By Lemma \ref{lemmasteplowerbound1} we can find blue matchings $M_i\subseteq A_i$ on $2\vass{A_i}-4(1+2\epsilon)t$ vertices for $i=1,2$. Indeed, because $A_i< 3(1+2\epsilon)t$ we have $2\vass{A_i}-4(1+2\epsilon)t\leq \vass{A_i}-(1+2\epsilon)t$. We can greedily extend the matching $M_1$ to a blue triangle factor using vertices in $A_2\setminus M_2$: because $\vass{A_2\setminus M_2}=(4+8\epsilon)t-\vass{A_2}>\vass{A_1}-(2+4\epsilon)t+2\epsilon t= \frac{\vass{M_1}}{2}+2\epsilon t$ we are able to extend every edge in $M_1$ to a blue triangle. Similarly we can extend all the matching $M_2$ to a blue triangle factor using vertices in $A_1\setminus M_1$. This two triangle factors are disjoint and therefore they form a unique triangle factor that we denote with $T$. We can observe that $\vass{T}=\frac{3}{2}\vass{M_1}+\vass{M_2}=3(\vass{A_1}+\vass{A_2})-12(1+2\epsilon)t$.

Let us now denote $U_1=A_1\setminus T$, $U_2=A_2\setminus T$ and $W=V(G)\setminus (A_1\cup A_2)$. We have
\begin{align*}
  \vass{U_1}&=\vass{A_1}-\vass{M_1}-\frac{\vass{M_2}}{2}\\
  &=\vass{A_1}-2\vass{A_1}+4(1+2\epsilon)t+2(1+2\epsilon)t-\vass{A_2}\\
  &=6(1+2\epsilon)t-(\vass{A_1}+\vass{A_2})\geq t\,.  
\end{align*}

Similarly we have $\vass{U_2}\geq t$. We can also notice that $\vass{W}= (5+k\epsilon)t-(\vass{A_1}+\vass{A_2})\geq (k-12)\epsilon t$. Finally, let us observe that $\vass{U_1},\vass{U_2}> \vass{W}+4\epsilon t$ by our assumption on $|G|$. Therefore we can find a blue triangle factor on $(U_1,U_2,W)$ covering $3\vass{W}$ vertices. Adding this triangle factor to $T$ we get a TCTF on 
\[3(5+k\epsilon)t-3(\vass{A_1}+\vass{A_2})+3(\vass{A_1}+\vass{A_2})-12(1+2\epsilon)t=(3+(3k-24)\epsilon)t\]
 vertices.

\smallskip
\underline{Case B}: $A_1$ is larger than $2(1+3\epsilon)t$ but all the other red components are smaller than $2(1+3\epsilon)t$.

Let $M_1$ be a blue matching in $A_1$ on $2\vass{A_1}-4(1+2\epsilon)t$ vertices. Let $U_1=A_1\setminus M_1$ and notice $\vass{U_1}\geq 4(1+2\epsilon)t-\vass{A_1}$. Because all the other red components are smaller than $2(1+3\epsilon)t$, we claim there exists $j$ such that $(1+3\epsilon)t< \vass{\bigcup_{i=2}^{j}A_i}\leq 2(1+3\epsilon )t$, and write $U_2=\bigcup_{i=2}^{j}A_i$. Indeed, if $|A_2|> (1+3\epsilon )t$ we can take $j=2$, while if not then we can increase $j$ sequentially until the lower bound is satisfied. Since in the latter situation we have $|A_j|\le|A_2|\le (1+3\epsilon)t$ the upper bound is not exceeded. Finally, let $W=V(G)\setminus (A_1\cup U_2)$ and note that $\vass{W}\geq (3+(k-6)\epsilon)t-\vass{A_1}$.
    
    Because of the size of $U_2$, we can extend edges of the blue matching $M_1$ to form a triangle factor $T$ in $M_1\cup U_2$ over $3\vass{A_1}-6(1+2\epsilon)t$ vertices. We have that $\vass{U_2\setminus T}\geq 3(1+2\epsilon)t-\vass{A_1}$. Because $\vass{W},\vass{U_1}>\vass{U_2\setminus T}+4\epsilon$, we can find a blue triangle factor on $(U_1,U_2\setminus T, W)$ covering at least $3\vass{U_2\setminus T}$ vertices. Therefore combining this triangle factor with the one previously obtained over $M_1\cup T$ we have a TCTF over at least $3\vass{A_1}-6(1+2\epsilon)t +3(3(1+2\epsilon)t - \vass{A_1})= 3(1+2\epsilon)t$ vertices. 

\smallskip
\underline{Case C}: Assume all connected components are smaller than $2(1+2\epsilon)t$. This means that we can partition $V(G)$ in three sets $U_1,U_2$ and $W$ such that $(1+3\epsilon)t< \vass{U_1},\vass{U_2}\leq 2(1+3\epsilon)t$ by choosing unions of components as in the previous case to get $U_1$ and $U_2$, and let $W$ be the union of the remaining components. Thus there are no red edges between any two of $U_1, U_2$ and $W$.
%Indeed, we can start by taking $U_1=\cup_{i=1}^j A_i$ for some proper $j$ and $U_2=\cup_{i=j+1}^k A_i$. If at some point we do not have enough vertices in the union of all the $A_i$ we can continue adding vertices without red neighbours.
Because $\vass{W}=5(1+k\epsilon)t-\vass{U_1}+\vass{U_2}$ we have that all three sets $U_1, U_2$ and $W$ have size at least $(1+3\epsilon)t$ and that the largest of the three has at least $(1+6\epsilon)t$ vertices. We can find a blue matching between the smallest two of $U_1,U_2,W$ greedily of size $(1+2\epsilon)t$, and extend this to a blue TCTF of size $3(1+2\epsilon)t$ vertices greedily, using the largest component.
\end{proof}

\begin{lemma}\label{tripartitetctf}
For $n\in\bb{N}$ sufficiently large, let $G$ be a tripartite graph over $3n$ vertices with partition sets of the same size. Assume that every vertex has at least $\frac{3n}{4}$ neighbours in each of the two partition sets of which it is not part of. There exists a TCTF that covers every vertex of $G$.

Also, every pair of edges in $G$ is triangle connected.
\end{lemma}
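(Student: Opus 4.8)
The plan is to handle the two assertions together, establishing the triangle-connectedness first since it is precisely what upgrades a triangle factor to a TCTF: once all edges of $G$ lie in a single triangle component, \emph{any} triangle factor covering $V(G)$ is automatically a TCTF. So the two tasks are: (i) show every pair of edges of $G$ is triangle connected, and (ii) produce a $K_3$-factor of $G$, i.e.\ $n$ vertex-disjoint triangles (necessarily one vertex in each of the three parts $V_1,V_2,V_3$).

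For (i), the starting point is that the degree hypothesis yields robust common neighbourhoods: any two vertices in distinct parts have at least $\tfrac34 n+\tfrac34 n-n=\tfrac n2$ common neighbours in the third part, and any two vertices in the same part have at least $\tfrac n2$ common neighbours in each of the other two parts. In particular every edge of $G$ lies in a triangle. I would then show that for a fixed vertex $v$ (say $v\in V_1$) all triangles through $v$ lie in one triangle component. For this, consider the bipartite graph $B_v$ with parts $N(v)\cap V_2$ and $N(v)\cap V_3$ whose edges are the edges of $G$ between those two sets; its edges correspond exactly to the triangles through $v$, and a path in $B_v$ lifts to a chain of triangles through $v$ in which consecutive triangles share an edge at $v$, so it suffices that $B_v$ be connected. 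But both parts of $B_v$ have size at least $\tfrac34 n$, a vertex of one part has at most $\tfrac n4$ non-neighbours in the corresponding part of $G$ and hence at least $\tfrac n2$ neighbours in the opposite part of $B_v$; thus any two vertices on the same side of $B_v$ have a common neighbour on the other side, and every vertex has a neighbour, forcing $B_v$ connected. Finally, if $u$ and $w$ are adjacent (hence in distinct parts), a common neighbour $z$ in the third part gives a triangle $uwz$ lying in the triangle component of $u$ and in that of $w$, so these agree; since $G$ is connected (again by the common-neighbour bound), all of these per-vertex components coincide, and because every edge lies in a triangle there is a single triangle component containing all edges.

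For (ii), I would use two applications of Hall's theorem. The bipartite graph between $V_1$ and $V_2$ is balanced on $n+n$ vertices with minimum degree $\tfrac34 n>\tfrac n2$, hence has a perfect matching $M$. Now form the bipartite graph $H$ between the $n$ edges of $M$ and the $n$ vertices of $V_3$, joining $u_1u_2\in M$ to $v\in V_3$ precisely when $v$ is a common neighbour of $u_1$ and $u_2$: each $M$-edge has degree at least $\tfrac n2$ in $H$ by the common-neighbour bound, and each $v\in V_3$ has at most $\tfrac n4$ non-neighbours in each of $V_1$ and $V_2$, hence is joined to at least $n-2\cdot\tfrac n4=\tfrac n2$ edges of $M$. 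A balanced bipartite graph on $n+n$ vertices with minimum degree at least $\tfrac n2$ satisfies Hall's condition, so $H$ has a perfect matching, which turns $M$ into $n$ vertex-disjoint triangles covering all $3n$ vertices; by (i) this collection is a TCTF. (Alternatively one could quote the tripartite form of the Corr\'adi--Hajnal theorem, as $\tfrac34 n>\tfrac23 n$, but the Hall argument above is self-contained.)

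The only mildly delicate part is the triangle-connectedness bookkeeping in (i) --- verifying that the neighbourhood graphs $B_v$ are connected and that paths in them genuinely lift to chains of edge-sharing triangles in $G$ --- while step (ii) is a routine double use of Hall's theorem. I do not anticipate any substantive obstacle.
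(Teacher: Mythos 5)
Your proposal is correct and follows essentially the same route as the paper: the triangle factor is obtained by exactly the same two applications of Hall's theorem (a perfect matching between two parts, then a matching in the auxiliary bipartite graph between matching edges and the third part, both via the $\tfrac{n}{2}$ common-neighbour bound), and the triangle-connectedness rests on the same common-neighbour-in-the-third-part observation, merely packaged through the link graphs $B_v$ rather than the paper's direct argument that two edges sharing an endpoint are connected via a common neighbour. No gaps.
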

\begin{proof}
Let $m=\frac{3n}{4}$ and $X, Y$ and $Z$ denote the sets which partition $G$.
We first use Hall's theorem to prove that there exists a perfect matching $M$ between $X$ and $Y$. Indeed, let $S$ be a subset of $X$. If $\vass{S}\leq m$, because every vertex in $S$ has at least $m$ neighbours in $Y$ we have that the neighbourhood of $S$ in $Y$ has size not smaller than the size of $S$ itself. If $\vass{S}>m$ observe that by inclusion-exclusion principle we have that every vertex in $Y$ has a neighbour in $S$.
We shall now define a bipartite support graph $H$ over the sets $M, Z$. We add an edge between $xy$ and $z$ if the vertices $xyz$ form a triangle in $G$. We can observe that the existence of a perfect matching in $H$ gives us a triangle factor that covers all vertices of $G$.
Let $xy$ be in $M$, we can notice that since both $x$ and $y$ have at least $m$ neighbours in $Z$ we have that at least $\frac{n}{2}$ of the vertices of $Z$ are neighbours of both $x$ and $y$. Therefore every edge of $M$ has minimum degree at least $\frac{n}{2}$ in $H$.
Also, every vertex in $Z$ has minimum degree at least $\frac{n}{2}$ in $H$, since in $G$ it has minimum degree at least $m$ in both $X$ and $Y$.
We can then repeat the above piece of proof and use Hall's theorem to prove that we can find a perfect matching in $H$ and therefore a perfect triangle factor in $G$.

Let us now show that every couple of edges in $G$ is triangle connected. Let us first observe that if $xy$ and $xy'$ are both edges with $x\in X$ and $y,y'\in Y$ then we have that $x,y,y'$ share a neighbour in $Z$ and therefore they are triangle connected. This implies that the set of edges between $X$ and $Y$ is in the same triangle-connected component. We can easily conclude noticing that every triangle has one edge in each of the components $(X,Y), (Y,Z)$ and $(Z,X)$ which are therefore all the same triangle-connected component.  
\end{proof}

\begin{corollary}\label{corotripartitetctf}
For $n\in\bb{N}$ sufficiently large let $k,r\in\bb{N}$ such that $6r+4k<n$, let also $G$ be a tripartite graph over $3n$ vertices with partition sets $X, Y$ and $Z$ of the same size $n$. Moreover, assume every vertex in $G$ is adjacent to all but at most $k$ of the vertices in each of the two partition sets it is not a part of. Let us fix a 2-edge-colouring of $G$ such that $(X,Y)$, $(Y,Z)$ and $(X,Z)$ are $r$-red. We can find a red TCTF formed by at least $n-2r$ red triangles.

Also, all but at most $3r^2$ red edges of $G$ are in the same red triangle-connected component.
\end{corollary}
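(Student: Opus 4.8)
The plan is to discard a bounded number of vertices and then apply Lemma~\ref{tripartitetctf} to the red subgraph on what remains. First I would isolate the ``good'' vertices: let $X'$ be the set of $x\in X$ that are $r$-red both to $Y$ and to $Z$, and define $Y'\subseteq Y$ and $Z'\subseteq Z$ analogously. Since $(X,Y)$, $(Y,Z)$ and $(X,Z)$ are each $r$-red, at most $r$ vertices of $X$ fail to be $r$-red to $Y$ and at most $r$ fail to be $r$-red to $Z$, so $|X\setminus X'|\le 2r$, and likewise $|Y\setminus Y'|,|Z\setminus Z'|\le 2r$. Setting $m:=\min\{|X'|,|Y'|,|Z'|\}\ge n-2r$ and passing to subsets $X''\subseteq X'$, $Y''\subseteq Y'$, $Z''\subseteq Z'$ of common size $m$, I obtain balanced tripartite sets, each of which differs from its original part in at most $2r$ vertices.

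Next I would work inside $X''\cup Y''\cup Z''$ with the red edges only. A vertex $v\in X''$ is $r$-red to $Y$ and to $Z$ and misses at most $k$ vertices of each, so it has at least $m-(k+r)$ red neighbours in each of $Y''$ and $Z''$; since $m\ge n-2r>4(k+r)$ by the hypothesis $6r+4k<n$, this is more than $\tfrac34 m$, and the same holds symmetrically for $Y''$ and $Z''$. Hence Lemma~\ref{tripartitetctf} applies to the red subgraph on $X''\cup Y''\cup Z''$: it produces a red TCTF covering all $3m\ge 3(n-2r)$ of its vertices, which is the required red TCTF on at least $n-2r$ triangles, and it shows that all red edges with both endpoints in $X''\cup Y''\cup Z''$ lie in one red triangle component, which I denote $C$.

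It remains to account for the red edges of $G$ outside $C$. I would first strengthen the conclusion to: every red edge with both endpoints in $X'\cup Y'\cup Z'$ lies in $C$. Indeed, for such an edge between $u\in X'$ and $v\in Y'$, the vertices $u$ and $v$ each have at least $m-r$ red neighbours in $Z''$ and so share one, $z''$; then $u$ and $z''$ each have at least $m-r$ red neighbours in $Y''$ and so share one, $y''$, and since $y''z''$ is a red edge inside $Y''\cup Z''$ it lies in $C$, whence the triangle $uy''z''$ puts $uz''$ in $C$ and then the triangle $uvz''$ puts $uv$ in $C$. Consequently any red edge outside $C$ is incident to one of the sets $X\setminus X'$, $Y\setminus Y'$, $Z\setminus Z'$, each of size at most $2r$.

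Finally I would bring the count down to $3r^2$. The crude bound from the previous step is $O(rn)$, so the real work is to show that almost every red edge incident to a locally bad vertex still lies in $C$, the genuine exceptions being confined, for each pair of parts such as $(X,Y)$, to red edges joining a vertex of $X$ that is not $r$-red to $Y$ to a vertex of $Y$ that is not $r$-red to $X$ — at most $r^2$ such edges per pair, and $3r^2$ in all. The tool is again triangle-bridging: for a red edge $uv$ with $u\in X$, $v\in Y$ and, say, $u$ $r$-red to $Y$, one seeks a clean vertex $z''\in Z''$ red to both $u$ and $v$, forming a red triangle on $uv$ and then bridging one of its edges into $C$ as above. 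I expect this to be the main obstacle: it needs a careful case analysis of how a locally bad vertex can fail to reach $C$ — especially a vertex that is $r$-red to one of the other two parts but has essentially all of its red edges going to that part and is blue to the third — and one must keep the counts of common red neighbours sharp enough that the exceptional red edges really do collapse onto pairs of locally bad vertices, landing on exactly $3r^2$ rather than a larger multiple of $r^2$.
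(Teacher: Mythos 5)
Your proof of the first assertion is correct and is exactly the paper's (one-sentence) argument: restrict each part to the vertices that are $r$-red to both other parts, losing at most $2r$ vertices per part, check the red degree condition $m-(k+r)>\tfrac34m$ from $6r+4k<n$, and apply Lemma~\ref{tripartitetctf} to the red subgraph. Your intermediate claim that \emph{every} red edge with both endpoints in $X'\cup Y'\cup Z'$ lies in the big component $C$ is also correct as you argue it (bridge through a common red neighbour in $Z''$ and then into an edge of $G^{\mathrm{Red}}[Y''\cup Z'']$), and this already goes beyond what the paper writes down: the paper's proof consists only of the application of Lemma~\ref{tripartitetctf} and never addresses the second assertion at all.

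The step you flag as the main obstacle is a genuine gap, and in fact it cannot be closed: the problematic vertex you describe is a counterexample to the $3r^2$ bound as literally stated. Take $k=0$, $r=1$, $G$ complete tripartite, all edges red except that one vertex $u\in X$ sends only blue edges to $Z$. Then $(X,Y)$, $(Y,Z)$ are $0$-red and $(X,Z)$ is $1$-red ($u$ being the one permitted exceptional vertex of $X$), so the hypotheses hold, yet each of the $n$ red edges from $u$ to $Y$ lies in no red triangle whatsoever (a red triangle on $uy$ would need a red edge from $u$ to $Z$), so each is its own triangle component. Hence no argument can confine the exceptional red edges to pairs of locally bad vertices, and you should not try to reach $3r^2$. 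What is provable, and what suffices for every use of this corollary in the paper (where $r=\lambda t$ and one only needs ``almost all'' red edges, i.e.\ all but $o(t^2)$, in one component), is the bound you already have from your intermediate claim: every red edge outside $C$ meets one of the at most $6r$ vertices of $(X\setminus X')\cup(Y\setminus Y')\cup(Z\setminus Z')$, so there are at most $12rn$ such edges. I would state and prove that version rather than the $3r^2$ one.
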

\begin{proof}
Let $X'\subseteq X, Y'\subseteq Y$ and $Z'\subseteq Z$ of size exactly $n'=n-2r$ such that every vertex in $X'\cup Y'\cup Z'$ has at most $r$ blue vertices in each of the other two components. We can apply Lemma \ref{tripartitetctf} to $G'=G^{Red}[X'\cup Y'\cup Z']$ considering that each vertex in $G'$ is adjacent to all but at most $r+k<\frac{3}{4}n'$ vertices in each of the two partitioning sets.  
\end{proof}

\begin{lemma}\label{lemmasuppforlemma4}
There exists $\epsilon_0\in\mathbb{R}$ such that for all $  0<\epsilon<\epsilon_0$ there exists $t_0$ such that for every $t>t_0$ we have the following. Let $G$ be a graph of minimum degree at least $\vass{G}-\epsilon t$ whose edges are 2-edge-coloured. If there exist in $G$ two disjoint sets $X$ and $Y$ of size respectively $(1+5\epsilon)t$ and $(5+200\epsilon)t$ such that $(X,Y)$ is $\epsilon t$-red, then $G$ contains a monochromatic TCTF on at least $3(1+\epsilon)t$ vertices.
\end{lemma}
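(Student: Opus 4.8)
\emph{Plan.} The idea is to pass to the sets of well-behaved vertices, run Lemma~\ref{lemmaconnmatchtctf} inside the large part $Y$, and then exploit that every vertex of the relevant subset of $Y$ sees almost all of $X$ in red — both to build a red triangle factor and to certify that it is triangle connected. Concretely, I would set $Y_0=\{y\in Y: y\text{ is }\epsilon t\text{-red to }X\}$ and $X_0=\{x\in X: x\text{ is }\epsilon t\text{-red to }Y\}$. Since $(X,Y)$ is $\epsilon t$-red we have $|Y_0|\ge |Y|-\epsilon t\ge(5+199\epsilon)t$ and $|X_0|\ge|X|-\epsilon t=(1+4\epsilon)t$, and because $G$ has minimum degree at least $|G|-\epsilon t$, every induced subgraph of $G$ on a set $W$ has minimum degree at least $|W|-\epsilon t-1$. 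In particular Lemma~\ref{lemmaconnmatchtctf} applies to $G[Y_0]$ (taking its parameter to be, say, $(1+\epsilon)t$, which $|Y_0|$ and the minimum degree comfortably accommodate once $t_0$ is large in terms of $\epsilon$). If it returns a blue TCTF on at least $3(1+\epsilon)t$ vertices we are finished, as this is a monochromatic TCTF of $G$. Otherwise it returns a red connected matching; let $M$ be a sub-matching of it with exactly $(1+\epsilon)t$ edges. All edges of $M$ lie in one connected component $A$ of the red subgraph of $G[Y_0]$, so $A$ is connected in red and $A\subseteq Y_0$.

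Next I would attach an apex from $X$ to each edge of $M$ greedily. When it is the turn of $u_iv_i\in M$, both $u_i$ and $v_i$ lie in $Y_0$ and hence each is joined by a red edge to all but at most $\epsilon t$ vertices of $X$; so at least $|X|-2\epsilon t=(1+3\epsilon)t$ vertices of $X$ are red to both endpoints, and since fewer than $(1+\epsilon)t$ apices have been committed so far, a fresh $x_i\in X$ with $x_iu_i$ and $x_iv_i$ red is available. This produces a red triangle factor $T=\{x_iu_iv_i:1\le i\le(1+\epsilon)t\}$ on exactly $3(1+\epsilon)t$ vertices.

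The remaining point, which is the heart of the argument, is to show $T$ is triangle connected in red. I claim that \emph{all} red edges inside $A$ lie in a single red triangle component of $G$; granting this, since each $u_iv_i$ is such an edge and the three edges of any triangle $x_iu_iv_i$ are pairwise red-triangle-connected, $T$ is a red TCTF and we are done. To prove the claim: the red subgraph of $G$ induced on $A$ is connected and has at least two edges, so by transitivity of red-triangle-connectedness it suffices to show that any two red edges $ab$, $bc$ of $A$ sharing the vertex $b$ are red-triangle-connected in $G$. If $ac$ is red, the triangle $abc$ witnesses this. If not, then since $a,b,c\in A\subseteq Y_0$ each has at least $|X|-\epsilon t$ red neighbours in $X$ and $|X|-3\epsilon t>0$, the three vertices have a common red neighbour $z\in X$; then $zab$ and $zbc$ are red triangles sharing the red edge $zb$, so $ab$ and $bc$ are red-triangle-connected through $z$. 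I expect this last manoeuvre — routing a red triangle connection of two red edges of $A$ out through $X$ whenever the closing edge inside $Y$ fails to be red — to be the main obstacle, and it is exactly what forces the hypothesis that $(X,Y)$ is $\epsilon t$-red with $|X|$ a little larger than $t$ (so that the greedy apex assignment and these common-neighbour choices all survive); the size $(5+200\epsilon)t$ of $Y$ is merely the slack needed for Lemma~\ref{lemmaconnmatchtctf} to still apply after passing to $Y_0$ and inflating its parameter.
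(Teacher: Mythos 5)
Your proposal is correct and follows essentially the same route as the paper: restrict to the subset $Y'\subseteq Y$ of vertices that are $\epsilon t$-red to $X$, apply Lemma~\ref{lemmaconnmatchtctf} there to get either a blue TCTF (done) or a red connected matching, extend the matching to a red triangle factor using apices in $X$, and certify triangle-connectedness by observing that any three vertices of $Y'$ share a common red neighbour in $X$. The only cosmetic difference is that you attach the apices by an explicit greedy count while the paper invokes Lemma~\ref{tripartitetctf}; your connectivity argument is the paper's argument made slightly more explicit.
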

\begin{proof}
Let $Y'$ be the set of vertices in $Y$ that have at least $|X|-\epsilon t$ red neighbours in $X$. We have that $\vass{Y'}\geq (5+100\epsilon)t$ and also $G[Y']$ has minimum degree at least $|Y'|-\epsilon t$. By Lemma \ref{lemmaconnmatchtctf} applied to $Y'$, we find either a blue TCTF of size $3(1+\epsilon)t$ or a red connected matching on $2(1+\epsilon)t$ vertices. In the first case we are done, so we can assume we have a red connected matching on $2(1+\epsilon)t$ vertices, let us denote it by $M$. By Lemma \ref{tripartitetctf} we can extend $M$ to a triangle factor $T$ of size at least $3(1+\epsilon)t$. We claim that this triangle factor is triangle connected. Indeed, every adjacent couple of red edges in $Y'$ is triangle connected since any three vertices in $Y'$ share a red neighbour in $X$. Since being triangle connected is a transitive property and because $M$ is red connected, we can conclude that $T$ is triangle connected.
\end{proof}

\section{General setting}\label{sec:genset}

To prove Lemma~\ref{MainLemma}, we will use a decomposition of $V(G)$ into red and blue cliques, and some associated notation. In this section, we describe the decomposition, define the notation, and prove that the decomposition exists under the assumptions of Lemma~\ref{MainLemma}.

\begin{setting}\label{mainsetforG}
Given $\epsilon,t>0$, let $m=\frac{1}{4}\vass{\log{\epsilon}}$ and let $k\in\mathbb{N}$ be arbitrary.

Given a graph $G$ with $(9-\epsilon)t$ vertices and minimum degree at least $(9-2\epsilon)t$, suppose that $E(G)$ is 2-edge-coloured and that there is no monochromatic TCTF with at least $3(1+\epsilon)t$ vertices.

We fix a partition of $V(G)$ into a set $\Vbin$ of size at most $\epsilon^{1/2}t+\frac{40t}{\sqrt{m}}$ and a collection of at most $\frac{9t}{m}$ monochromatic cliques each of size between $2$ and $m$ such that the following holds.

For each vertex $u$ which is in a blue clique $C$ of the partition, we assume that at most $\tfrac{20t}{\sqrt{m}}$ blue edges go from $u$ to vertices in blue cliques of the partition which are not blue triangle connected to $C$. We assume a similar statement replacing red with blue. Moreover, the number of cliques of size less than $(1-\frac{1}{k})m$ is at most $\frac{400k}{|\log\epsilon|^{3/2}}t$.

We write $B_1$ for a blue triangle-connected component of blue cliques of the partition covering the largest number of vertices, $B_2$ for the next largest, and so on. We break ties arbitrarily, and define similarly $R_1$ for the largest red triangle-connected component of red cliques of the partition and so on. We write $B_{\ge3}:=B_3\cup B_4\cup\dots$, and $R_{\ge3}:=R_3\cup R_4\cup\dots$.
\end{setting}

It is important to note that while we will care about which vertices contain the triangles of a TCTF, we will not care which vertices are used for the triangle connections between these triangles: when we ask whether two (say red) edges are red triangle-connected, we will always mean red triangle-connected in the entire graph $G$. Thus `there is a red TCTF in $X$ of size $3s$' means that there is a set of $s$ vertex-disjoint red triangles contained in the set $X$, which are all in the same red triangle component of $G$. In particular, the set $B_1$ is a collection of blue cliques which are blue triangle-connected in $G$, the connections might well use vertices outside $B_1$.

In the following sections, we will often state lemmas referring to a `decomposition as in Setting~\ref{mainsetforG}'. When we do this, we intend to fix a specific decomposition which will remain unchanged in the proof, and statements we make refer only to this decomposition. Thus `there is no red TCTF of size $3s$ contained in the red cliques' should be understood as meaning that the union of the red cliques \emph{of the fixed partition} do not contain such a TCTF. It might be that there is a different partition which does contain such a TCTF.

\medskip

The idea of our proof of Lemma~\ref{MainLemma} is now roughly as follows. We suppose that $G$ contains no large monochromatic TCTF. Our initial aim is then to show that each of $B_1,B_2,R_1,R_2$ has roughly $2t$ vertices, while $B_3\cup R_3$ has roughly $t$ vertices, these give us the five large sets of the partition of Lemma~\ref{MainLemma}. We will see that once the size bounds are obtained, it is not too hard to show that the edge colours are as claimed. Our proof for the claimed size bounds will go over several steps of finding increasingly strong upper and lower bounds on these sizes.

\medskip

We obtain Setting~\ref{mainsetforG} by iterative application of Ramsey's theorem followed by removing a few vertices to $\Vbin$. The following Lemma \ref{lem:mainsetexists} states that this is always possible, provided $\epsilon$ is small enough and $t$ large enough.

\begin{claim}\label{lemma3path}
For $n$ sufficiently large, let $G$ be a graph over $2n$ vertices, and let $A$, $B$ be disjoint cliques of size $n$ in $G$. If there are more than $2(n-1)$ edges between $A$ and $B$, the graph is triangle connected.
\end{claim}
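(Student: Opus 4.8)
The plan is to reduce the statement, via the fact that cliques are internally triangle connected, to one double‑counting fact about the bipartite graph $H$ of cross edges. Here $H$ has parts $A$ and $B$ and its edges are the edges of $G$ with one endpoint in each part, so $e(H)>2(n-1)$. Since $A$ is a clique on $n$ vertices and $n$ is large, Lemma~\ref{lemmatwothirdsappe} shows that all edges of $G[A]$ lie in a single triangle component of $G$, and the same holds for $G[B]$; so it is enough to find one edge $ab$ of $H$ (with $a\in A$, $b\in B$) such that $a$ has a neighbour $b'\in B\setminus\{b\}$ and $b$ has a neighbour $a'\in A\setminus\{a\}$. Indeed, the clique edges $bb'$ and $aa'$ then complete triangles $\{a,b,b'\}$ and $\{a,b,a'\}$, so $ab$ is triangle connected both to $bb'\in E(G[B])$ and to $aa'\in E(G[A])$, whence $E(G[A])$, $E(G[B])$ and $ab$ all lie in one triangle component.

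To produce such an $ab$, call a vertex of $H$ \emph{light} if it has at most one neighbour in $H$, and suppose for contradiction that every edge of $H$ meets the set $L$ of light vertices. Since each vertex of $L$ has $H$-degree at most $1$, every edge of $H$ contributes at least $1$ to $\sum_{v\in L}\deg_H(v)$, so $e(H)\le\sum_{v\in L}\deg_H(v)\le\vass{L}$. If both $A$ and $B$ contain a vertex outside $L$ then $\vass{L}\le(n-1)+(n-1)=2(n-1)$, contradicting $e(H)>2(n-1)$; otherwise some part, say $A$, lies entirely in $L$, and then $e(H)=\sum_{a\in A}\deg_H(a)\le n<2(n-1)$ for $n$ large, again a contradiction. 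Hence some edge of $H$ joins two vertices of $H$-degree at least $2$, which is the edge needed above.

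I expect the double counting to be the only substantive point: the reduction is immediate from the clique structure, whereas there one genuinely uses the constant $2(n-1)$ and must separately handle the degenerate case where one side of $H$ carries no vertex of degree at least $2$. It is worth noting that this argument pins down exactly that $A$ and $B$ lie in a common triangle component, which is the relevant meaning of ``triangle connected'' here; cross edges lying in no triangle are not controlled by it.
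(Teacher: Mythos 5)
Your proof is correct and takes essentially the same route as the paper's: both reduce the claim to the bipartite graph $H$ of cross edges and use the same double count against the degree-at-most-one vertices to show that more than $2(n-1)$ edges forces an edge of $H$ whose endpoints both have $H$-degree at least $2$ (equivalently, the paper's path of length three), which triangle-connects the two cliques. Your write-up is merely more explicit about the reduction step and the final case analysis, and your closing caveat about cross edges lying in no triangle matches the paper's intended (clique-to-clique) reading of ``triangle connected''.
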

\begin{proof}
Equivalently, we can show that if $H$ is subgraph of $K_{n,n}$ without a path of length three, then $H$ has at most $2(n-1)$ edges. Assume $H$ is a subgraph of $K_{n,n}$ without paths of length three. In particular this means that every edge has one endpoint with degree exactly one. Therefore the number of edges in $H$ is at most equal to the number of vertices in $H$ with degree one. If we have less than $2n-2$ vertices of degree one we are done. If we have $2n$ vertices with degree exactly one we know that $H$ is a perfect matching. It cannot be the case that $2n-1$ vertices have degree exactly one. Therefore we covered all cases and we can conclude that the number of edges in $H$ is at most $2(n-1)$.
\end{proof}

\begin{lemma}\label{lem:mainsetexists}
There exists $\epsilon_0\in\mathbb{R}$ such that for all $  0<\epsilon<\epsilon_0$ there exists $t_0$ such that for every $t>t_0$ and $k\in\mathbb{N}$ the following holds. Given a graph $G$ with at least $(9-\epsilon)t$ vertices, with minimum degree at least $(9-2\epsilon)t$, whose edges are 2-edge-coloured, there exist sets $R_1,\dots$ and $B_1,\dots$ of monochromatic red and blue cliques respectively satisfying the properties of Setting \ref{mainsetforG}.
\end{lemma}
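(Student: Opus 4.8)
The plan is to build the partition by an iterated Ramsey extraction of large monochromatic cliques, and then to clean it up by moving a few ``bad'' vertices into $\Vbin$. Throughout we may assume $\epsilon$ is small and $t$ is large, so in particular $m=\tfrac14|\log\epsilon|$ is as large as we wish.

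\emph{Extraction.} Let $S$ denote the set of not-yet-assigned vertices, initially $V(G)$. The near-completeness of $G$ (minimum degree $\ge(9-2\epsilon)t=|G|-\epsilon t$) means the complement of $G[S]$ has maximum degree at most $\epsilon t$, hence is $(\epsilon t+1)$-colourable, so $G[S]$ contains a clique on at least $|S|/(\epsilon t+1)$ vertices. Thus, as long as $|S|\ge R(m,m)\cdot(\epsilon t+1)$, we can find a clique of size at least $R(m,m)$ in $G[S]$, apply Ramsey's theorem inside it, keep exactly $m$ vertices of the resulting monochromatic clique, add this $m$-clique to the partition and delete its vertices from $S$. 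Since $R(m,m)<4^m$ and $m=\tfrac14|\log\epsilon|$, the stopping threshold $R(m,m)(\epsilon t+1)$ is a power of $1/\epsilon$ strictly smaller than $\epsilon^{-1/2}$ times $\epsilon t$, hence at most $\epsilon^{1/2}t$ once $\epsilon$ is small; so when the process halts we put the remaining at most $\epsilon^{1/2}t$ vertices of $S$ into $\Vbin$. At this stage every clique has size exactly $m$, and as the cliques cover at most $(9-\epsilon)t<9t$ vertices there are at most $9t/m$ of them (in particular no clique is small).

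\emph{Cleaning.} For any two blue cliques $C\ne C'$ of the partition that are \emph{not} blue triangle-connected, Claim~\ref{lemma3path} applied to the blue graph on $C\cup C'$ (using that $m$ is large) shows there are at most $2(m-1)$ blue edges between them; summing over the at most $\binom{9t/m}{2}$ such pairs bounds the total number of such ``bad'' blue edges by $81t^2/m$, and symmetrically for red. Hence at most $8.1t/\sqrt m$ vertices lying in blue cliques send more than $20t/\sqrt m$ blue edges to blue cliques not triangle-connected to their own, and likewise for red; move all of these (at most $16.2t/\sqrt m$) vertices into $\Vbin$, together with the few leftover vertices of any clique that has thereby shrunk to size~$1$. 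The key observation making this step monotone is that deleting vertices from cliques does not change which surviving cliques are monochromatically triangle-connected: every edge of a monochromatic clique lies in a single monochromatic triangle component of $G$, so a sub-clique of size $\ge2$ has the same triangle component as the original clique, and this relation only depends on $G$, not on the partition. Therefore the bad-edge count of each surviving vertex can only decrease, so the required degree bound of Setting~\ref{mainsetforG} now holds. Finally, one checks the size bounds: $\Vbin$ has at most $\epsilon^{1/2}t+16.2t/\sqrt m\le\epsilon^{1/2}t+40t/\sqrt m$ vertices; the number of cliques never increased, so it is still at most $9t/m$; each surviving clique has size between $2$ and $m$; and a clique of final size below $(1-\tfrac1k)m$ must have lost more than $m/k$ vertices to $\Vbin$, so there are at most $\tfrac{16.2kt}{m^{3/2}}\le\tfrac{400k}{|\log\epsilon|^{3/2}}t$ of them.

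\emph{Main obstacle.} The delicate points are the two places where the triangle-connectivity structure meets the combinatorics: verifying that monochromatic triangle-connectedness of cliques is preserved when vertices are discarded (so that the cleaning step cannot create new bad edges), and bookkeeping the constants — the choice $m=\tfrac14|\log\epsilon|$ is exactly what forces $4^m$ to be a small enough power of $1/\epsilon$ for the extraction leftover to fit inside the $\epsilon^{1/2}t$ part of the $\Vbin$-budget, while the $\sqrt m$ and $m^{3/2}$ denominators in the remaining bounds are produced by the edge-counting in the cleaning step.
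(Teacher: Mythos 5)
Your proposal is correct and follows essentially the same route as the paper: greedy Ramsey extraction of monochromatic $K_m$'s (the paper certifies the large clique in the leftover via Tur\'an's theorem rather than your complement-colouring argument, but this is immaterial), followed by the same Claim~\ref{lemma3path} edge-count to move high-bad-degree vertices into $\Vbin$ and the same bookkeeping for small cliques. Your explicit observation that monochromatic triangle-connectivity of cliques is measured in all of $G$ and hence is preserved under deletion, so the cleaning step is monotone, is a point the paper leaves implicit.
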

\begin{proof}
Let us start by proving that we can find disjoint monochromatic copies of $K_m$ covering all but at most $\epsilon^{\frac{1}{2}}t$ vertices of $G$.

First, notice that we do not want all cliques to be of the same colour, we just want monochromatic cliques (some might be red, some might be blue).
Let us start by selecting greedily as many monochromatic copies of $K_m$ as possible in $G$, this means that we start by selecting an arbitrary monochromatic $K_m$, then we remove its vertices and we repeat the process over the remaining vertices of $G$.

Let us assume by contradiction that when this process stops more than $\epsilon^{\frac{1}{2}}t$ vertices of $G$ remain. Let $W$ be a set of size $\epsilon^{\frac{1}{2}}t$ not containing any monochromatic clique. Because of the minimum degree condition over $G$, we have that each vertex of $G[W]$ has degree at least $(\epsilon^{\frac{1}{2}}-\epsilon)t$ and therefore $G[W]$ contains at least $\epsilon^{\frac{1}{2}}(\epsilon^{\frac{1}{2}}-\epsilon)t^2=\left(1-\frac{1}{\epsilon^{-\frac{1}{2}}}\right)(\epsilon^{\frac{1}{2}}t)^2$ edges. 
By Turan's theorem, we have that $G[W]$ contains a (not necessarily monochromatic) clique $K$ of size $\epsilon^{-\frac{1}{2}}$. By a Ramsey's upper bound on diagonal Ramsey numbers we have that $R(m,m)\leq 4^m$, this value is smaller than $\epsilon^{-\frac{1}{2}}$ for $\epsilon$ small enough. Indeed, for $\epsilon<1$ we have $\epsilon=e^{-4m}$ and hence we can rewrite the inequality as $R(m,m)\leq 4^m\leq e^{2m}=\epsilon^{-\frac{1}{2}}$ which holds for $m$ large enough. Therefore we can find a monochromatic clique $K'$ in $W$.  
This contradicts the stopping of our greedy algorithm.

We can now focus on the number of vertices in blue cliques that witness more than $\frac{20t}{\sqrt{m}}$ blue edges that have endpoints in distinct triangle-connected components of blue $K_m$.
\begin{itemize}
    \item There are at most $\frac{9t}{m}$ disjoint copies of $K_m$ in $G$. This, combined with Claim \ref{lemma3path} gives us that at most $\frac{(10t)^2}{m}$ blue edges have endpoints in distinct triangle-connected components of blue $K_m$.
    \item At most $\frac{(20t)^2}{m}$ vertices in blue cliques of $G$ witness a blue edge with its two extremities in two distinct triangle-connected components of blue $K_m$. Therefore at most $\frac{20t}{\sqrt{m}}$ vertices in blue cliques witness more than $\frac{20t}{\sqrt{m}}$ such edges.
    \item We can do the same for red and obtain again at most $\frac{20t}{\sqrt{m}}$ vertices in red cliques that witness more than $\frac{20t}{\sqrt{m}}$ edges with their two extremities in two distinct triangle-connected components of red cliques.
\end{itemize}

 Finally, for some given positive integer $k$, we want to count how many monochromatic cliques in $V(G)\setminus \Vbin$ can have less than $(1-\frac{1}{k})m$ vertices. Which is, we want to state at most how many cliques of $G$ can have more than $\frac{m}{k}$ vertices in $\Vbin$. It is not difficult to see that this number is less than $\frac{40t}{\sqrt{m}}\cdot \frac{k}{m}\leq\frac{400 k }{\vass{\log{\epsilon}}^{\frac{3}{2}}}t$. %N.B: 300 shuld suffice!
Therefore at most $\frac{100 k }{\vass{\log{\epsilon}}^{\frac{1}{2}}}t$ vertices are in cliques of size at most $(1-\frac{1}{k})m$.
\end{proof}

%%%%%%%%%%%%%%%%%%%%%%%%%%%%%%%%%%%%%%%%%%%%%%%%%%%%%%%%%%%%%%%%%%%%%%%%%%%%%%%

\section{First upper bounds on the component size}
In this section, we prove that $|B_i|,|R_i|$ cannot be much bigger than $\tfrac73t$ (Lemma~\ref{upperboundsingle}) and that we cannot have both $B_1$ and $B_2$ (or $R_1$ and $R_2$) much bigger than $2t$ (Lemma~\ref{upperbounddouble}).

\begin{lemma}\label{upperboundsingle}
There exists $h_0>0$ such that for every $0<h<h_0$ there exists $\epsilon_0>0$ such that for all $0<\epsilon<\epsilon_0$ there exists $t_0$ such that for every $t>t_0$ we have the following. Let $G$ be a 2-edge coloured graph with $(9-\epsilon)t$ vertices and minimum degree at least $(9-2\epsilon)t$. Fix any collection of red and blue cliques as in Setting \ref{mainsetforG} with parameters $\epsilon$ and $t$. If $G$ has a set of blue triangle-connected cliques covering more than $(\frac{7}{3}+h)t$ vertices, then $G$ contains a monochromatic TCTF with $(1+\epsilon)t$ triangles. The same holds replacing blue with red.
\end{lemma}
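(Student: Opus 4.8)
The plan is to suppose $G$ has no monochromatic TCTF on $3(1+\epsilon)t$ vertices and derive a contradiction. Write $\mathcal B$ for the given blue-triangle-connected collection of cliques, $B:=\bigcup_{C\in\mathcal B}C$ (so $|B|>(\tfrac73+h)t$) and $\bar B:=V(G)\setminus B$, and recall that every induced subgraph $G[U]$ has minimum degree at least $|U|-\epsilon t$, so the preliminary lemmas apply to all large enough subsets of $V(G)$. If $|B|\ge 3(1+\epsilon)t+18t/m$ we are already done: each clique $C\in\mathcal B$ contains $\lfloor|C|/3\rfloor$ vertex-disjoint triangles and $|\mathcal B|\le 9t/m$, so the cliques of $\mathcal B$ contain at least $(1+\epsilon)t$ vertex-disjoint blue triangles, and since all edges of a clique on at least three vertices are triangle connected and all cliques of $\mathcal B$ lie in one blue triangle component, these triangles form a blue TCTF. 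So we may assume $|B|<3(1+\epsilon)t+18t/m$, whence $|\bar B|>(6-h)t$ once $\epsilon$ is small.

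Now extract from the cliques of $\mathcal B$ vertex-disjoint blue triangles $\mathcal T_0$ and blue edges $\mathcal M_0$ (disjoint from each other, all in the one blue triangle component) with $|\mathcal T_0|+|\mathcal M_0|=(1+\epsilon)t$ and $|\mathcal M_0|$ as small as possible; taking $\lfloor|C|/3\rfloor$ triangles in each clique and converting a clique triangle (three vertices) to a clique edge (two vertices) only while we still have fewer than $(1+\epsilon)t$ pieces, one obtains $|\mathcal M_0|\le\max\{0,\,3(1+\epsilon)t-|B|\}+18t/m<(\tfrac23-\tfrac h2)t$. If each edge of $\mathcal M_0$ could be completed to a blue triangle by adding one distinct vertex of $\bar B$, then $\mathcal T_0$ together with these completed triangles would be a blue TCTF on $3(1+\epsilon)t$ vertices, a contradiction. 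Thus the bipartite graph $\Gamma$ with parts $\mathcal M_0$ and $\bar B$, in which $uv\in\mathcal M_0$ is joined to $w$ precisely when $uvw$ is a blue triangle, has no matching saturating $\mathcal M_0$; by Hall's theorem there is a nonempty $\mathcal M_1\subseteq\mathcal M_0$ with $|N_\Gamma(\mathcal M_1)|<|\mathcal M_1|$. Put $U:=V(\mathcal M_1)$ and $W:=\bar B\setminus N_\Gamma(\mathcal M_1)$, so $|U|=2|\mathcal M_1|<(\tfrac43-h)t$ and, using $|W|\ge|\bar B|-|\mathcal M_1|$ together with $|\bar B|=(9-\epsilon)t-|B|$ and the bound on $|\mathcal M_0|$, the term $|B|$ cancels to yield $|W|>(6-h)t>(5+100\epsilon)t$. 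Since $uvw$ is not blue for every $uv\in\mathcal M_1$ and $w\in W$, discarding the at most $\epsilon t$ non-neighbours of each $w$ shows that every $w\in W$ has at least $|\mathcal M_1|-\epsilon t=\tfrac{|U|}2-\epsilon t$ red neighbours in $U$; so, on average, a vertex of $U\subseteq B$ has roughly $\tfrac12|W|$ red neighbours in $\bar B$ (and when $\mathcal M_1$ is very small the same conclusion comes directly from a single obstructing edge).

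To finish, apply Lemma~\ref{lemmaconnmatchtctf} to $G[W]$: either $W$ contains a blue TCTF on $3(1+\epsilon)t$ vertices, and we are done, or $W$ contains a red connected matching $M_R$ on $2(1+\epsilon)t$ vertices. In the latter case the task --- and this is the main obstacle --- is to build a red TCTF on $3(1+\epsilon)t$ vertices out of $M_R$ together with the red edges running between $W$ and $B$ (equivalently, the large red neighbourhood inside $\bar B$ around a vertex of $B$, and the at-least-half-red pair $(W,U)$). A plain degree count is just barely insufficient to complete the edges of $M_R$ to red triangles using vertices of $U$, so one has to exploit the density of these red edges more carefully: pass first to a red-dense part of $W$ on which a red connected matching is automatically red triangle connected (as in the proof of Lemma~\ref{lemmasuppforlemma4}), and then either locate an $\epsilon t$-red pair of sizes $(1+5\epsilon)t$ and $(5+200\epsilon)t$, which finishes the argument by Lemma~\ref{lemmasuppforlemma4}, or extract a balanced tripartite red structure and apply Corollary~\ref{corotripartitetctf}. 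This interaction between the blue cliques making up $B$ and the red edges leaving $B$ is the crux of the lemma, and I would expect it to require a further short case analysis; the statement with the two colours exchanged is symmetric.
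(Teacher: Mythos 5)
Your opening reductions are sound and parallel the paper's: the case $|B|\ge 3(1+\epsilon)t+O(t/m)$ is handled by triangles inside the cliques, and the cancellation giving $|W|>(5+100\epsilon)t$ is correct. But the proof has a genuine gap exactly where you flag ``the main obstacle'', and the structure you have set up cannot close it. The Hall-type obstruction you extract is too weak: when $w\in W$ fails to complete $uv\in\mathcal M_1$ to a blue triangle, you only learn that \emph{one} of $uw$, $vw$ is red, so each $w\in W$ has at least $\tfrac12|U|-\epsilon t$ red neighbours in $U$ --- a half-red pair (and $\mathcal M_1$ might consist of a single edge, in which case you get essentially nothing). This is quantitatively useless, not ``just barely insufficient'': for an edge $xy$ of the red matching $M_R\subseteq W$, inclusion--exclusion gives $2\bigl(\tfrac12|U|-\epsilon t\bigr)-|U|=-2\epsilon t$ common red neighbours in $U$, so the degree count guarantees no red triangles at all; and the triangle-connectivity argument of Lemma~\ref{lemmasuppforlemma4} (any three vertices of the red-dense side share a red neighbour on the other side) fails for the same reason. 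Lemma~\ref{lemmasuppforlemma4} genuinely requires an $\epsilon t$-red pair, i.e.\ almost \emph{all} edges red, and neither of your proposed fallbacks produces one.

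The missing idea is to exploit the clique structure of $B$, rather than fixed edges, when recording the failure of the blue extension. The paper greedily extends one edge per \emph{clique} to a blue triangle using a vertex outside $A$; when this stalls, any outside vertex with \emph{two} blue neighbours in the same still-available clique would yield a further blue triangle, since those two neighbours are automatically joined by a blue edge. Hence every vertex of $Y$ has at most one blue neighbour per clique of the leftover set $X$, and since there are only $O(t/|\log\epsilon|)$ cliques, the pair $(X,Y)$ is $\tfrac{20}{\sqrt{|\log\epsilon|}}t$-red with $|X|\ge(1+\cdot)t$ and $|Y|\ge(5+\cdot)t$ --- precisely the hypothesis of Lemma~\ref{lemmasuppforlemma4}. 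Your bipartite graph $\Gamma$ only asks whether $w$ completes the one designated edge $uv$, discarding this leverage. Without it, the ``further short case analysis'' you defer is not short: it is the entire content of the lemma.
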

\begin{proof}
Let $A$ be a triangle-connected set of blue cliques that covers more than $(\frac{7}{3}+h)t$ vertices. If $|A|\ge 3(1+\frac{50}{\vass{\log{\epsilon}}})t$ then we greedily construct a blue TCTF within $A$ that leaves out at most two vertices from each clique and obtain a blue TCTF covering at least $3(1+\epsilon)t$ vertices as desired, so we may now assume $|A|< 3(1+\frac{50}{\vass{\log{\epsilon}}})t$.

Because of this bound on the size of $A$, and the condition of Setting~\ref{mainsetforG} there are at most $\frac{40000}{\vass{\log{\epsilon}}^{\frac{3}{2}}} t$ cliques with less than $\frac{99}{100}m$ vertices, we have that there are at most
\[\frac{3(1+10\epsilon)t}{\frac{99}{100}m}+\frac{40000}{\vass{\log{\epsilon}}^{\frac{3}{2}}} t\leq \frac{16 t}{\vass{\log{\epsilon}}}\]
blue cliques in $A$ and at least $\vass{V(G)}-3(1+\frac{50}{\vass{\log{\epsilon}}})t$ vertices in $V(G)\setminus A$. %In here, the biggest term should be the one with \frac{k}{\sqrt{\vass{\log{\epsilon}}}} therefore we can probably consider just that one.
In succession for each blue clique in $A$, we greedily construct a blue triangle factor $T$ using one edge in the selected clique and one vertex outside of $A$. There are two possible cases.

\underline{Case A}: The greedy construction provides us with a set $T$ of $\frac{2}{3}(1+\epsilon)t$ triangles. 

We can extend $T$ to a triangle factor $T'$ by adding triangles from within the cliques in $A$. When we stop, at most two vertices for each cliques are being unused and hence we obtained a blue TCTF covering at least 

$$3\cdot \frac{2}{3}(1+\epsilon)t + \left(\big(\tfrac{7}{3}+h\big) - 2\cdot \frac{2}{3}(1+\epsilon) - 2\cdot \frac{16}{\vass{\log{\epsilon}}}\right)t$$
vertices. Note that this means that $T'$ covers at least $3(1+\epsilon) t$ vertices.

\underline{Case B}: The greedy construction stops before we get  $\frac{2}{3}(1+\epsilon)t$ triangles. 

Let $Y=V(G)\setminus (A\cup T)$. We have that 
\begin{align*}
\vass{Y}&\geq (9-\epsilon)t-3(1+\frac{50}{\vass{\log{\epsilon}}})t-\frac{2}{3}(1+\epsilon)t\\
&\geq (5+h)t\ge\Big(5+\frac{20000}{\sqrt{\vass{\log\epsilon}}}\Big)t\,.    
\end{align*}
Let us denote by $X$ the set of all the vertices in $A\setminus T$ which are in cliques that have at least three vertices in $A\setminus T$. At most $\frac{4}{3}(1+\epsilon)t+2\cdot \frac{16}{\vass{\log{\epsilon}}}t$ vertices are in $A$ but not in $X$. Therefore we have that
\[\vass{X}\geq \left(1+\frac{100}{\sqrt{\vass{\log\epsilon}}}\right)t\,.\]
%n+\frac{12}{m}$.}
%
Because we stopped the greedy procedure, we cannot extend $T$ using an edge in a clique of $X$ and a vertex in $Y$, therefore each vertex in $Y$ has at most one blue neighbour in each clique of $X$.
This means that there are at most $\frac{16 t}{\vass{\log{\epsilon}}}\cdot \vass{Y}< \frac{16 t}{\vass{\log{\epsilon}}}\cdot\left( 9-\frac{7}{3} \right)t\leq \frac{20^2t^2}{\vass{\log{\epsilon}}}$ blue edges between $X$ and $Y$. Hence we have that $(X,Y)$ is $\frac{20}{\sqrt{\vass{\log{\epsilon}}}}t$-red. We can now apply Lemma \ref{lemmasuppforlemma4} with input $\frac{20}{\sqrt{\vass{\log\epsilon}}}$. We conclude that $G$ contains a monochromatic TCTF on at least
\[3\Big(1+\frac{20}{\sqrt{\vass{\log\epsilon}}}\Big)t>3(1+\epsilon)t\]
vertices.
\end{proof}

%\begin{claim}
%Let $\lambda>0$ be arbitrary if we choose $\epsilon$ small enough in setting \ref{mainsetforG} we have that if $A$ and $B$ are two distinct triangle-connected component of $G$ on blue $K_m$ then $(A,B)$ is $\lambda t$-red.
%\end{claim}

\begin{lemma}\label{upperbounddouble}
There exists $h_0>0$ such that for every $0<h<h_0$ there exists $\epsilon_0>0$ such that for all $0<\epsilon<\epsilon_0$ there exists $t_0$ such that for every $t>t_0$ we have the following. Let $G$ be a 2-edge coloured graph with $(9-\epsilon)t$ vertices and minimum degree at least $(9-2\epsilon)t$. Fix any collection of red and blue cliques as in Setting \ref{mainsetforG} with parameters $\epsilon$ and $t$. If $G$ contains two disjoint sets of blue triangle-connected cliques, with each set of cliques covering more than $(2+h)t$ vertices, then $G$ contains a monochromatic TCTF with $(1+\epsilon)t$ triangles. The same holds replacing blue with red.
\end{lemma}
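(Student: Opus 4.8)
The plan is to reduce to the case of two \emph{distinct maximal} blue triangle components and then, in every situation, either build a blue TCTF by greedily extending a blue matching (as in Case~B of Lemma~\ref{upperboundsingle}), or arrange the hypotheses of Lemma~\ref{lemmasuppforlemma4} or Corollary~\ref{corotripartitetctf}.

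\emph{Reduction.} Call the two given unions of blue triangle-connected cliques $D_1,D_2$; each lies in one of the maximal blue triangle components $B_1,B_2,\dots$ of Setting~\ref{mainsetforG}. If they lie in the same one, that component covers more than $2(2+h)t>(\tfrac73+h)t$ vertices and Lemma~\ref{upperboundsingle} finishes, so we may take $D_1\subseteq B_1$, $D_2\subseteq B_2$ with $B_1\neq B_2$ and, enlarging $D_i$, assume $D_i=B_i$. By Lemma~\ref{upperboundsingle} we may also assume $(2+h)t<\vass{B_1},\vass{B_2}\le(\tfrac73+h)t$. By Setting~\ref{mainsetforG} every vertex of $B_i$ has at most $\tfrac{20t}{\sqrt m}$ blue neighbours in blue cliques outside $B_i$, and since $\vass{\Vbin}$ is negligible this makes the pairs $(B_1,V(G)\setminus B_1\setminus R)$ and $(B_2,V(G)\setminus B_2\setminus R)$ both $\eps't$-red, where $R$ is the union of the red cliques of the partition and $\eps':=c/\sqrt{\vass{\log\epsilon}}$ for a suitable absolute constant $c$. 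Note $\eps'\to0$ as $\epsilon\to0$ while $\eps'\gg\epsilon$, so $G$ has minimum degree at least $\vass{G}-\eps't$ and we may apply the earlier lemmas with $\eps'$ in the role of their $\epsilon$.

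\emph{Benign cases.} If $\vass R$ is small, say $\vass R\le\tfrac32 t$, then $\vass{V(G)\setminus B_1\setminus R}=(9-\epsilon)t-\vass{B_1}-\vass R>(5+200\eps')t$, and since $\vass{B_1}>(1+5\eps')t$, Lemma~\ref{lemmasuppforlemma4} with $X=B_1$ and $Y=V(G)\setminus B_1\setminus R$ produces a monochromatic TCTF on more than $3(1+\eps')t>3(1+\epsilon)t$ vertices. So assume $\vass R>\tfrac32 t$. Independently, run the greedy process of Case~B of Lemma~\ref{upperboundsingle} inside $B_1$: take a blue matching using edges of the blue cliques of $B_1$ (all lying in one blue triangle component) and extend its edges one at a time to blue triangles using vertices of $V(G)\setminus B_1$. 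If this reaches $(1+\epsilon)t$ triangles we are done. Otherwise it stalls, producing a partial blue TCTF $T_1$ and a set $X_1\subseteq B_1$ of leftover ``fat'' clique vertices with $(X_1,Y_1)$ $\eps't$-red, where $Y_1:=V(G)\setminus B_1\setminus T_1$ has more than $5t$ vertices; if $\vass{X_1}\ge(1+5\eps')t$ then Lemma~\ref{lemmasuppforlemma4} with $X=X_1$, $Y=Y_1$ finishes, and if instead $T_1$ together with blue triangles taken inside $X_1$ already covers $3(1+\epsilon)t$ vertices we are also done. Running the same analysis with $B_2$ in place of $B_1$, the only possibility left open is that both greedy runs stall after placing only a middling number of triangles (and $\vass R>\tfrac32 t$).

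\emph{Hard case and obstacle.} What remains is the genuinely hard situation: $\vass R$ is large and neither $B_1$ nor $B_2$ spans $3t$ vertices, so no blue TCTF of size $(1+\epsilon)t$ can be built inside either, and one must produce a \emph{red} TCTF. I would do this by comparing a largest red triangle component $R_1$ of the partition (or, if it is too small, a union of a few red components) with $B_1$ and $B_2$. Since the red cliques outside $R_1$ are $\eps't$-blue to $R_1$, and $B_1,B_2$ are $\eps't$-red to everything outside $R$ and to one another, the only interactions to analyse are $(B_1,R_1)$ and $(B_2,R_1)$. If both are $\eps't$-red then, after discarding the few bad vertices, equally large subsets of $B_1$, $B_2$ and $R_1$ form a tripartite graph with all three pairs $\eps't$-red, and Corollary~\ref{corotripartitetctf} gives a red TCTF on more than $3(1+\epsilon)t$ vertices. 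If instead one of $(B_1,R_1),(B_2,R_1)$ is far from red, then many vertices of $R_1$ have more than $\tfrac{20t}{\sqrt m}$ blue neighbours inside, say, $B_1$; by pigeonhole each such vertex has two blue neighbours in a common blue clique of $B_1$ and so lies in the blue triangle component of $B_1$, and a careful extension argument then yields a blue TCTF with $(1+\epsilon)t$ triangles from $B_1$'s cliques together with these extra vertices. The main obstacle is making this last dichotomy quantitatively airtight: one has to control simultaneously the several $o(t)$ error terms (from $\Vbin$, from the $\tfrac{20t}{\sqrt m}$-bounds, and from the slack $h$ in the size bounds), track exactly how many vertices each greedy step consumes, and deal with the possibility that no single red component but several of them together must serve as the ``mostly red'' third part.
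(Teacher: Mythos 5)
Your reduction and your ``benign'' cases are sound and essentially match the first half of the paper's argument (bounding $\vass{B_1},\vass{B_2}$ between $(2+h)t$ and $(\tfrac73+h)t$, noting via Claim~\ref{lemma3path} and Setting~\ref{mainsetforG} that the relevant pairs are mostly red, and running the stalling greedy extension). But the case you yourself flag as the hard one is where the proof actually lives, and the dichotomy you propose there does not close. If $(B_1,R_j)$ fails to be $\eps't$-red, all you get is slightly more than $\eps't$ vertices of $R_j$ with more than $\eps't$ blue neighbours in $B_1$. Each such vertex does, by pigeonhole, land in the blue triangle component of $B_1$ --- but these vertices live in \emph{red} cliques, so they do not enlarge the blue clique structure, and $\eps't$ external vertices is nowhere near the $(1+\epsilon)t$ needed to extend $(1+\epsilon)t$ disjoint edges of $B_1$'s cliques to blue triangles (which is exactly what the stalled greedy run already failed to do). The other horn is also not clean: if only some red components are mostly red to both $B_1$ and $B_2$, their union may have far fewer than $(1+\epsilon)t$ vertices, so Corollary~\ref{corotripartitetctf} yields too few triangles. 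So neither horn produces the required TCTF in general.

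The paper closes this case with a different idea you may want to study. Run the greedy extension from \emph{both} $B_1$ and $B_2$ disjointly, obtaining partial TCTFs $T_A,T_B$, leftover fat sets $A'\subseteq B_1$, $B'\subseteq B_2$ of size at least $(h-2\epsilon)t$, and a residual set $Z$ of size at least $3(1-h)t$ with $(A',Z)$ and $(B',Z)$ both almost entirely red. The key observation is that \emph{every} pair of red edges in $Z$ is red triangle connected: their common red neighbourhoods $N_A\subseteq A'$ and $N_B\subseteq B'$ are large, and since $(B_1,B_2)$ is mostly red there is a red edge between $N_A$ and $N_B$ completing the connection. Consequently the red cliques inside $Z$ all lie in one red triangle component, so by Lemma~\ref{upperboundsingle} (applied in red) they cover at most $(\tfrac73+h)t$ vertices, forcing at least $(\tfrac23-4h)t$ vertices of $Z$ to lie in blue cliques. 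Those vertices then form, together with $B_1$ and $B_2$, an almost-complete red tripartite structure (Corollary~\ref{corotripartitetctf}), whose TCTF is connected to the red component of $Z$ and can be topped up with triangles inside the red cliques of $Z$, giving a red TCTF on roughly $2\vass{C\cap Z}+\vass{Z}\ge(\tfrac{13}{3}-O(h))t$ vertices. In short: instead of classifying the colour of $(B_i,R_j)$ for individual red components, the paper exploits that the residue of two simultaneous stalled greedy runs is huge, red-connected through $A'$ and $B'$, and therefore cannot absorb all the red cliques.
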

\begin{proof}
Let $A$ and $B$ be disjoint sets of triangle-connected blue cliques, each covering at least $(2+h)t$ vertices. We may suppose $h_0\le\tfrac1{30}$. Let $C$ denote the collection of all the remaining vertices in blue cliques, if any exist. By Lemma \ref{upperboundsingle}, either we have the desired monochromatic TCTF or both $A$ and $B$ are smaller than $\big(\tfrac73+h\big)t$. Therefore by Setting~\ref{mainsetforG} with $k=100$ they both contain at most the following number of blue cliques:
\[\frac{\frac{71}{30}t}{\frac{99}{100}\cdot\frac{1}{4}\vass{\log{\epsilon}}}+\frac{40000t}{\vass{\log{\epsilon}}^{\frac{3}{2}}}\leq \frac{10t}{\vass{\log{\epsilon}}}\,.\]
Moreover, by Claim \ref{lemma3path} there are less than $2m$ blue edges between any blue clique in $A$ and any clique in $B$. Therefore, between $A$ and $B$ there are less than $2m\cdot \frac{10t}{\vass{\log{\epsilon}}} \cdot \frac{10t}{\vass{\log{\epsilon}}}\leq \frac{50}{\vass{\log{\epsilon}}}t^2$ blue edges. Hence, $(A,B)$ is $\frac{8}{\sqrt{\vass{\log{\epsilon}}}}t$-red. Let us set $\lambda=\frac{8}{\sqrt{\vass{\log{\epsilon}}}}$.

Let us greedily build a blue triangle factor $T_A$ by extending blue edges in blue cliques of $A$ to blue triangles using vertices outside of $A$. Let $Y_A$ be the set of vertices in $V(G)\setminus A$ used in this way
and $A'$ the set of remaining vertices in $A$
. We can independently do the same construction with $B$ and obtain a triangle factor $T_B$ and some similar sets $Y_B$ and $B'$. Finally, let us denote $Z=V(G)\setminus(A\cup Y_A\cup B\cup Y_B)$. 

Because we can extend $T_A$ to a blue TCTF that covers all but at most two vertices for each clique of $A$ {(and similarly for $B$)}, we have that $\vass{A\cup Y_A}, \vass{B\cup Y_B}\leq (3+3\epsilon+\frac{8}{m})t$. This implies $$\vass{Z}\geq \vass{V}-(\vass{A\cup Y_A}+\vass{B\cup Y_B})\geq (9-h)t-2(3+h)t=3(1-h)t\,.$$ 

We also have that $\vass{Y_A}, \vass{Y_B}\leq (1+\epsilon)t$, which implies that $\vass{A'}, \vass{B'} \geq (h-2\epsilon)t$.

Each vertex of $Z$ has at most one blue neighbour per clique in each of $A'$ and $B'$, since we cannot further extend $T_A$ or $T_B$. Since $\vass{Z}\leq 5t$ and there are at most $\frac{10 t}{\vass{\log{\epsilon}}}$ cliques in each of $A'$ and $B'$ we have that both $(A',Z)$ and $(B',Z)$ have at most $\frac{50}{\vass{\log{\epsilon}}}t^2$ blue edges and hence they are both $\lambda t$-red.

\begin{claim}
We claim that all red edges in $Z$ are triangle connected. Moreover, if $|C\cap Z|\ge \tfrac13t$ then we can find a TCTF in $(A, B, C\cap Z)$ on $\vass{C\cap Z}-ht$ triangles that is triangle connected to the red triangle-connected component of $Z$.
\end{claim}
\begin{claimproof}
Let $xy$ and $uv$ be two red edges in $Z$, let $N_A$ be the set of vertices in $A'$ red adjacent to all vertices $x,y,u$ and $v$ and let $N_B$ be defined similarly. To prove that $xy$ and $uv$ are triangle connected it suffices to show that there exists a red edge between $N_A$ and $N_B$. Because of the lower bound on the size of $A'$ and $B'$, because of the minimum degree condition and because every vertex in $Z$ is adjacent in red to all but at most $\frac{10t}{\vass{\log{\epsilon}}}$ of its neighbours in $A'$ and $B'$, we have that $\vass{N_A},\vass{N_B}\geq (h-2\epsilon)t-4\cdot \epsilon t-4\cdot \frac{10 t}{\vass{\log{\epsilon}}}\geq \frac{3h}{4}t$. Since $(A,B)$ is $\lambda t$-red, there is a red edge between $N_A$ and $N_B$. Therefore all the red edges in $Z$ are in the same triangle-connected component.

Let us now create a red TCTF $\Delta$ in $(A, B, C\cap Z)$ as follows. We first find a largest TCTF $\Delta'$ in $(A', B', C\cap Z)$. By Corollary \ref{corotripartitetctf}, we have that $\Delta'$ has at least $\frac{h}{2}t$ vertices, since we have a lower bound on both $\vass{A'}$ and $\vass{B'}$.

We can now use Corollary \ref{corotripartitetctf} to find a red TCTF in $(A\setminus \Delta', B\setminus \Delta', (C\cap Z)\setminus \Delta')$ that covers almost all $(C\cap Z)\setminus \Delta'$. Let us call $\Delta$ the union of the two triangle factors. By Lemma \ref{tripartitetctf} that $\Delta$ is triangle connected. 

It now suffices to show that $\Delta'$ is triangle connected to the red triangle-connected component of $Z$. Let $xy$ be a red edge in $Z$, let $N_A$ be the set of vertices in $A'\cap \Delta'$ red adjacent to both $x$ and $y$, and let $N_B$ be defined similarly in $B'\cap\Delta'$. To prove that $xy$ and $\Delta'$ are triangle connected it suffices to show that there exists an edge of $\Delta'$ between $N_A$ and $N_B$. Because every vertex in $Z$ is adjacent in red to all but at most $\frac{10t}{\vass{\log{\epsilon}}}$ of its neighbours in $A'$ and $B'$, we have that $\vass{N_A},\vass{N_B}\geq \frac{99h}{100}t$. Since $\Delta'$ is a matching in $(A', B')$ of large size, some of its edges are between $N_A$ and $N_B$.
\end{claimproof}

$Z\setminus C$ can be extended to a set of triangle-connected red cliques of $G$, possibly adding vertices from $Y_A$ and $Y_B$. Therefore, we have $\vass{Z\setminus C}\leq \left(\frac{7}{3}+h\right)t$ and this in particular implies that $\vass{C\cap Z}\geq (\frac{2}{3}-4h)t$. We form a red TCTF as follows. We start by using our last claim to construct a TCTF $T_C$ over at least $\vass{C\cap Z}-h t\geq (\frac{2}{3}-5h)t$ triangles between $A, B$ and $C\cap Z$ that is also triangle connected to the red triangle-connected component of $Z$. We then extend this TCTF by taking triangles in cliques of $Z\setminus C$. This is enough to conclude.
\end{proof}

%%%%%%%%%%%%%%%%%%%%%%%%%%%%%%%%%%%%%%%%%%%%%%%%%%%%%%%%%%%%%%%%%%%%%%%%%%%

\section{Colours and connection, and the sharp upper bound}
In this section we begin by proving two lemmas which show that certain patterns of edges between triangle components imply triangle connections, which we need in both this section and the next. We then establish several inequalities about sizes of the components (Lemma~\ref{biguniquelemma}), most of which imply that various components cannot be too small. In particular, we establish the useful inequality $|B_2|\ge|B_{\ge3}|$, and similarly for red. Building on this, we finally prove the sharp upper bound we want: none of the components can contain much more than $2t$ vertices (Lemma~\ref{lemmalastupperbound}). These are the two statements we need to complete the proof of Lemma~\ref{MainLemma} in the next section.

\subsection{Colours and connection}
\begin{claim}\label{coroappendix1}
For any $h>0$ there exists $\epsilon>0$ such that if we use that $\epsilon$ for Setting \ref{mainsetforG} we have the following. Let $A, B$ be two disjoint sets of vertices in blue cliques such that there are no triangle-connected components with some vertices in $A$ and some vertices in $B$. Then the pair $(A,B)$ is $ht$-red. The same works for red.
\end{claim}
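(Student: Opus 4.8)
The plan is to suppose toward a contradiction that $(A,B)$ is not $ht$-red, i.e.\ there are many blue edges between $A$ and $B$, and derive that $G$ must contain a large monochromatic TCTF, contradicting the hypothesis of Setting~\ref{mainsetforG}. The point is that $A$ and $B$ sit inside collections of blue cliques, and each blue clique has bounded size (at most $m=\tfrac14|\log\epsilon|$); so the number of \emph{pairs} of cliques, one in $A$ and one in $B$, between which there are more than $2m$ blue edges must be large if $(A,B)$ fails to be $ht$-red. Concretely, I would first note that since $A$ and $B$ are unions of blue cliques of the fixed partition and there are at most $\tfrac{9t}{m}$ such cliques in total, the cliques meeting $A$ and the cliques meeting $B$ are each bounded in number.

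**The key step: a blue edge between the two collections that lies in a blue triangle spanning both.** If $(A,B)$ is not $ht$-red, there are at least (roughly) $\tfrac12 h t \cdot (\text{something linear in }t)$ blue edges between $A$ and $B$, hence by Claim~\ref{lemma3path} (really its contrapositive, bounding the number of blue cliques) there is a blue clique $C_A$ in $A$ and a blue clique $C_B$ in $B$ with more than $2(m-1)$ blue edges between them — so by Claim~\ref{lemma3path}, $C_A\cup C_B$ is blue triangle connected, i.e.\ $C_A$ and $C_B$ are blue triangle-connected (each has at least $2$ vertices). But $C_A\subseteq A$ and $C_B\subseteq B$, and a blue triangle connection between them would be witnessed by a sequence of blue triangles; the triangle component containing these edges therefore contains vertices of $A$ and vertices of $B$, directly contradicting the hypothesis that no blue triangle component meets both $A$ and $B$. (Here I am using the convention stressed after Setting~\ref{mainsetforG} that triangle connection is always measured in the whole graph $G$, so $C_A$ and $C_B$ being blue triangle-connected as cliques literally means an edge of $C_A$ is blue triangle-connected in $G$ to an edge of $C_B$; since those edges lie inside $A$ and $B$ respectively, the common blue triangle component has vertices in both.)

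**Making the counting precise.** The one thing requiring care is the quantitative bookkeeping: I need that ``$(A,B)$ not $ht$-red'' forces enough blue edges that the pigeonhole over clique-pairs fires. If $(A,B)$ is not $ht$-red then by definition more than $ht$ vertices of $A$ fail to be $ht$-blue to $B$ (or symmetrically in $B$); say it is $A$. Each such vertex of $A$ has more than $ht$... wait — more carefully, each such vertex sends a red edge to more than $ht$ vertices of $B$, which is the wrong direction; I instead want: ``$(A,B)$ not $ht$-red'' is the negation, so it is \emph{not} the case that all but $ht$ vertices of $A$ are $ht$-blue to $B$, hence more than $ht$ vertices of $A$ each have more than $ht$ red-non-... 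Let me restate: a vertex $a$ is $ht$-blue to $B$ iff $a$ has at most $ht$ non-blue (i.e.\ red, or absent — but $G$ is nearly complete) neighbours in $B$. So the failure gives $>ht$ vertices $a\in A$ each with $>ht$ \emph{red} neighbours in $B$; that is again about red edges, the complement of what I want. So actually the correct reading is: to get \emph{blue} edges I should apply the argument to show $(A,B)$ IS $ht$-red by contradiction — assume it is not, so many vertices have many red neighbours... no. I will simply observe: since $G[A\cup B]$ is nearly complete (minimum degree $(9-2\epsilon)t$, so each vertex misses at most $\epsilon t$ others), ``few blue edges between $A$ and $B$'' is essentially equivalent to ``$(A,B)$ is $O(\sqrt\epsilon\, t)$-red'' once one also controls the small exceptional set; conversely if $(A,B)$ is not $ht$-red then there are $\gtrsim h^2 t^2 / m$ blue edges between $A$ and $B$... actually even a single vertex of $A$ with $> ht$ blue neighbours in $B$ — no. The clean statement: if $(A,B)$ is \emph{not} $ht$-red, then there are $> ht$ vertices of $A$ with $> ht$ \emph{blue} neighbours in $B$, OR symmetrically — because being $ht$-red fails means not-all-but-$ht$ of $A$ are $ht$-blue-to-$B$... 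I realise the paper's notation has $r$-red defined via blue edges being rare; rereading: ``$v$ is $r$-blue to $A$ if $va$ is blue for all but $\le r$ vertices $a\in A$'', and ``$(A,B)$ is $r$-red'' is defined ``similarly'', i.e.\ replacing blue by red: all but $\le r$ vertices of $A$ are $r$-red to $B$ (have red edges to all but $\le r$ of $B$) and vice versa. So \emph{not} $ht$-red means $> ht$ vertices of $A$ each have $> ht$ blue neighbours in $B$, or the symmetric statement in $B$. That is exactly what I want. Then $> ht \cdot ht = h^2t^2$ blue edges go between $A$ and $B$, but the number of cliques in $A$ times the number in $B$ times $2m$ is at most $\big(\tfrac{9t}{m}\big)^2 \cdot 2m = \tfrac{162 t^2}{m}$, and since $m = \tfrac14|\log\epsilon| \to \infty$ as $\epsilon\to 0$, choosing $\epsilon$ small enough that $\tfrac{162}{m} < h^2$ forces some clique-pair $(C_A,C_B)$ with $> 2m \ge 2(m-1)$ blue edges, which by Claim~\ref{lemma3path} is blue triangle connected, giving the contradiction. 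The main (mild) obstacle is just matching this counting to the precise definition of $r$-red and to the clique-count bounds of Setting~\ref{mainsetforG}; the structural heart — ``many blue edges $\Rightarrow$ a fat blue clique-pair $\Rightarrow$ a blue triangle component spanning $A$ and $B$'' — is immediate from Claim~\ref{lemma3path}.
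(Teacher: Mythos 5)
Your proposal is correct and is essentially the paper's argument run in contrapositive form: both rest on the clique-count bound from Setting~\ref{mainsetforG} together with Claim~\ref{lemma3path}, the paper bounding the total number of blue edges between $A$ and $B$ directly by $O(t^2/\vass{\log\epsilon})$ and reading off $r$-redness, while you assume failure of $ht$-redness, extract $\gtrsim h^2t^2$ blue edges, and pigeonhole onto a single triangle-connected clique pair. The quantitative bookkeeping you worried about matches the paper's (including the harmless use of $2m$ rather than $2(n-1)$ for cliques of unequal sizes), so no changes are needed.
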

\begin{proof}
By Remark \ref{lem:mainsetexists}, in $G$ there are at most $\frac{9t}{\frac{99}{100}m}+\frac{\const{4}t}{\vass{\log{\epsilon}}^{\frac{3}{2}}}\leq \frac{40t}{\vass{\log{\epsilon}}}$ cliques. Therefore, by Claim \ref{lemma3path} we can have at most $2m\cdot\frac{20t}{\vass{\log{\epsilon}}}\cdot \frac{20t}{\vass{\log{\epsilon}}}\leq \frac{200 t^2}{\vass{\log{\epsilon}}}$ blue edges between $A$ and $B$. In particular this means that the pair $(A,B)$ is $\sqrt{\frac{200}{\vass{\log{\epsilon}}}}t$-red. For $\epsilon$ small enough we have the result we wanted.
\end{proof}

\begin{lemma}\label{lemma17}
There exists $h_0>0$ such that for every $0<h<h_0$ there exists $\epsilon_0>0$ such that for all $0<\epsilon<\epsilon_0$ there exists $t_0$ such that for every $t>t_0$ we have the following. Let $G$ be a 2-edge coloured graph with $(9-\epsilon)t$ vertices and minimum degree at least $(9-2\epsilon)t$. Fix  any collection of red and blue cliques as in Setting \ref{mainsetforG} with parameters $\epsilon$ and $t$. Let $Y_1, Y_2, Y_3$ be subsets of size at least $10ht$ of vertices in distinct red triangle-connected clique components, and let $X$ be a set of size at least $ht$ of vertices in blue cliques which all have more than $2ht$ blue neighbours in two of the $Y_i$s. Then at least one of the blue edges in a clique of $X$ is triangle connected to the large blue TCTF in $(Y_1, Y_2, Y_3)$. Everything still works if we invert red and blue. %The problem here is: what happens if they are too sparse? Can they be too sparse?
\end{lemma}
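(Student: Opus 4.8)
The statement I want to prove (Lemma~\ref{lemma17}) says: given subsets $Y_1, Y_2, Y_3$ of three distinct red triangle components, each of size $\geq 10ht$, and a set $X$ of $\geq ht$ vertices lying in blue cliques, each of which has $>2ht$ blue neighbours in two of the $Y_i$, then some blue edge inside a clique of $X$ is blue triangle-connected to the big blue TCTF living in $(Y_1,Y_2,Y_3)$. The first remark to record is that $(Y_1,Y_2,Y_3)$ genuinely does carry a large blue TCTF: since $Y_1,Y_2,Y_3$ sit in distinct \emph{red} components, every edge between two of them is blue, so $(Y_1,Y_2,Y_3)$ is a complete tripartite blue graph with parts of size $\geq 10ht$, and Lemma~\ref{tripartitetctf} (or rather its reasoning) tells us all blue edges of $(Y_1,Y_2,Y_3)$ lie in a single blue triangle component — this is the ``large blue TCTF'' the statement refers to.

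**Main argument.** The plan is a pigeonhole-plus-triangle-chasing argument. Take any vertex $u\in X$ lying in a blue clique $C$. By hypothesis $u$ has $>2ht$ blue neighbours in two of the $Y_i$ — say, after relabelling, in $Y_1$ and $Y_2$. Pick a second vertex $u'\in C$ (cliques in Setting~\ref{mainsetforG} have size $\geq 2$, so this exists) and note $uu'$ is a blue edge inside a clique of $X$. I would then try to show $uu'$ is blue triangle-connected to $(Y_1,Y_2,Y_3)$. The obstruction is that I do not control the blue neighbourhood of $u'$ at all; I only control $u$. So instead of working with the single edge $uu'$, I should first show that $u$ itself is ``blue-attached'' to the big blue TCTF via a blue triangle, i.e. find $y_1\in Y_1$, $y_2\in Y_2$ both blue-adjacent to $u$: since $u$ has $>2ht$ blue neighbours in each of $Y_1$ and $Y_2$, and these sets are disjoint, such $y_1,y_2$ exist and $y_1y_2$ is blue (distinct red components), so $uy_1y_2$ is a blue triangle. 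Now $y_1y_2$ lies in the big blue TCTF of $(Y_1,Y_2,Y_3)$, hence the edges $uy_1$ and $uy_2$ are blue triangle-connected to it. It remains to connect the clique edge $uu'$ to $uy_1$ (say). For this I want a common blue neighbour $w$ of $u$, $u'$ and $y_1$: then $uu'w$ and $uy_1w$ — wait, that needs $u'w$ and $uw$ and $uy_1$ and $y_1w$ all blue; more carefully, $uu'w$ a blue triangle and $uwy_1$ a blue triangle share the edge $uw$, chaining $uu'$ to $uy_1$. The vertex $w$ exists by the minimum degree condition: $u,u',y_1$ have at most $3\cdot 2\epsilon t$ common non-neighbours, and since blue cliques are large while each vertex witnesses at most $20t/\sqrt m$ ``cross-component'' blue edges, almost all of their common neighbourhood gives blue edges to $u'$ — actually I need $w$ with $uw, u'w, wy_1$ all blue, which I get because $u$ is in a blue clique $C$ (take $w$ near $C$) and use Setting~\ref{mainsetforG}'s bound on $u'$'s blue edges leaving its own triangle component. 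The cleanest route: take $w$ to be any third vertex of the clique $C$ if $|C|\geq 3$; then $uw, u'w$ are blue automatically, and I only need $wy_1$ blue, which I arrange by instead choosing $y_1$ among the $>2ht$ blue neighbours of $u$ in $Y_1$ to \emph{also} be a blue neighbour of $w$ — but that over-constrains things if $|C|=2$.

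**The real obstacle and how to handle it.** The genuinely delicate case is $|C|=2$, i.e. $C=\{u,u'\}$, where I have no control over $u'$. Here I think the right move is: $u$ has $>2ht$ blue neighbours in, say, $Y_1$; call this set $N_1$, and it has $|N_1|>2ht$, and $N_1$ lies in a single red component so $N_1$ together with any $y_2\in Y_2$ blue-adjacent to $u$ shows $u$ blue-triangle-attached to the TCTF as above. Now I want $uu'$ connected in. Use the minimum degree condition on $u'$: $u'$ has $\geq(9-2\epsilon)t - |V\setminus(N_1)|$ blue-or-red neighbours inside $N_1$... this is not immediately blue. Better: since $u$ is blue-adjacent to all of $N_1$ and $|N_1|>2ht$, and every edge inside $N_1$ is blue (wait — $N_1\subseteq Y_1$ which is in a red component, so edges inside $Y_1$ need not be blue). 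Hmm. So I must instead use two of the $Y_i$: $u$ has $>2ht$ blue neighbours $N_1\subseteq Y_1$ and $N_2\subseteq Y_2$; all $N_1$–$N_2$ edges are blue. Then for $u'$: by minimum degree $u'$ has at most $2\epsilon t$ non-neighbours total, so $u'$ has a neighbour, in fact $>2ht-2\epsilon t$ neighbours, in each of $N_1,N_2$; among those, using Claim~\ref{coroappendix1}-type counting or the Setting bound, almost all edges from $u'$ into a blue-clique-free... no. The cleanest fix, which I expect is the intended one: pick $a\in N_1$, $b\in N_2$ with $u'a$ and $u'b$ blue — possible since $u'$ has $\geq 2ht-2\epsilon t$ neighbours in each and only $O(t/\sqrt m)$ of the edges from $u'$ leaving $u'$'s own blue component are blue... \emph{if} $u'$ is in a blue component; if $u'a$ is red for almost all $a\in N_1$, then since $|N_1|$ is large this says $a$ and $u'$ are in the same red clique component for many $a$, forcing $Y_1$'s component and $u'$'s component to coincide — but $a\in N_1\subseteq Y_1$, giving $u'$ into that red component, which is fine, just pick the other index. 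So at least one of $N_1,N_2$, say $N_1$, has a vertex $a$ with $u'a$ blue; then $uu'a$ is a blue triangle (as $ua$ blue too), and $ua$ connects to $uy_2$ via $a,y_2\in Y_1\cup Y_2$... I then chain $uu'\sim ua \sim uN_2$-edge $\sim$ TCTF. The write-up will need to carefully split on which of $u'a$ is blue/red using that $|N_1|,|N_2| > 2ht \gg 2\epsilon t + 20t/\sqrt m$, and invoke the ``distinct red components'' fact to get the blue triangle $uab$ with $a\in Y_1, b\in Y_2$. I expect the bookkeeping of these case splits — rather than any single hard idea — to be the main effort; each individual triangle-connection step is routine given the large sizes relative to $\epsilon t$ and $t/\sqrt m$, and Claim~\ref{lemma3path}/Claim~\ref{coroappendix1} supply the counting that controls stray blue edges between unconnected components.
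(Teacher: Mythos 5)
Your argument diverges from the paper's at exactly the point you flag as delicate, and the difficulty there is not resolved. Two problems. A smaller one first: vertices in distinct red triangle-connected clique components are \emph{not} joined entirely by blue edges — Claim~\ref{coroappendix1} only makes each pair $(Y_i,Y_j)$ \emph{almost} all blue, and Claim~\ref{lemma3path} permits up to $2m$ red edges between any two cliques in distinct components. So $(Y_1,Y_2,Y_3)$ is not a complete tripartite blue graph, and your repeated use of ``$y_1y_2$ is blue because they lie in distinct red components'' needs the pruning step the paper does (discarding the sets $R_i$ of vertices of $Y_i$ with many red edges to another $Y_j$). The serious gap is the clique-mate $u'$: it need not belong to $X$, and Setting~\ref{mainsetforG} only restricts blue edges from $u'$ to \emph{other blue cliques}, saying nothing about the colours of edges from $u'$ to the red-clique vertices of $Y_1\cup Y_2$. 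Your proposed dichotomy for $|C|=2$ — if $u'a$ is red for almost all $a\in N_1$ then $u'$ is ``forced into $Y_1$'s red component, so pick the other index'' — fails: a vertex in a blue clique can send almost all of its edges to \emph{both} $N_1$ and $N_2$ in red with no contradiction. Even if $u'$ thereby sits in red triangles with a clique of $Y_1$ and with a clique of $Y_2$, those triangles share only the vertex $u'$, not an edge, so they do not chain the two red components together; this is precisely why Lemma~\ref{lemma19} works with a red \emph{edge} $x_1x_2$ rather than a single vertex. So in the $|C|=2$ case you may find no blue neighbour of $u'$ in $N_1\cup N_2$, and the chain from $uu'$ to the TCTF collapses.

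The repair is to abandon the local viewpoint entirely: the conclusion is only existential over clique edges of $X$, and the paper exploits this with a double count. By pigeonhole a subset $S\subseteq X$ with $|S|\ge ht/3$ has its $>2ht$ blue neighbours in the same pair, say $Y_1,Y_2$; counting blue edges between $S$ and $Y_1\setminus R_1$ yields a single vertex $y_1\in Y_1\setminus R_1$ with at least $\tfrac{1}{15}h^2t$ blue neighbours in $S$; since the partition of Setting~\ref{mainsetforG} has at most $9t/m=O\bigl(t/|\log\epsilon|\bigr)\ll h^2t$ cliques, two of these neighbours $x_1,x_2$ lie in the \emph{same} blue clique, so $x_1x_2y_1$ is a blue triangle with a controlled apex — the step your local argument cannot supply. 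A common blue neighbour $y_2\in Y_2\setminus R_2$ of $x_1$ and $y_1$ then connects $x_1x_2$ through $x_1y_1$ to $y_1y_2$ and hence to the TCTF of Lemma~\ref{tripartitetctf}. Your write-up would need to be restructured along these lines.
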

\begin{proof}
First, note that for $\epsilon$ small enough and by Claim \ref{coroappendix1} we have that each pair in $Y_1, Y_2, Y_3$ is $\frac{h^3}{2}t$-blue. Let $R_i$ be the set of vertices in $Y_i$ with more than $h^3t$-red edges in one of the other $Y_j$.
Without loss of generality let us assume that the set $S$ of vertices in $X$ with more than $2ht$ blue neighbours in both $Y_1$ and $Y_2$ has size at least $\frac{ht}{3}$. Then each vertex in $S$ has at least $(2h-h^3)t$ blue neighbours in both $Y_1\setminus R_1$ and $Y_2\setminus R_2$. Then we have a vertex $y_1$ in $Y_1\setminus R_1$ which is incident in blue to at least $(2h-h^3)t\cdot\frac{ht}{3}\cdot\frac{1}{9t}\geq\frac{1}{15}h^2t$ vertices in $S$. So for $t$ large enough $y_1$ is incident in blue to at least two vertices of $S$ that lie in the same clique, let us call two such vertices $x_1$ and $x_2$. Since $y_1$ has at least $\vass{Y_2}-(\epsilon+h+h^3)t\geq \vass{Y_2}-(2h+h^3)t$ blue neighbours in $Y_2\setminus R_2$, we have that $y_1$ and $x_1$ have a common blue neighbour $y_2$. This implies that $x_1x_2$ is blue-triangle connected to $y_1y_2$ and this by minimum degree condition means that $x_1x_2$ is triangle connected to the large blue TCTF over $(Y_1, Y_2, Y_3)$ given by Lemma \ref{tripartitetctf}.
\end{proof}

\begin{lemma}\label{lemma19}
There exists $h_0>0$ such that for every $0<h<h_0$ there exists $\epsilon_0>0$ such that for all $0<\epsilon<\epsilon_0$ there exists $t_0$ such that for every $t>t_0$ we have the following. Let $G$ be a 2-edge coloured graph with $(9-\epsilon)t$ vertices and minimum degree at least $(9-2\epsilon)t$. Fix  any collection of red and blue cliques as in Setting \ref{mainsetforG} with parameters $\epsilon$ and $t$, and let $Y_1, Y_2$ be subsets of size at least $10ht$ of vertices in distinct red triangle-connected components, and let $X_1, X_2$ be subsets of size at least $10ht$ of vertices in distinct blue triangle-connected clique components.
Finally, assume that $X_1$ is $ht$-red to each of $Y_1$ and $Y_2$. Then at most $2ht$ vertices in $X_2$ have more than $2ht$ red neighbours in both $Y_1$ and $Y_2$. Everything still works if we invert red and blue.
\end{lemma}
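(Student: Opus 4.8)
The plan is to argue by contradiction: suppose that more than $2ht$ vertices of $X_2$ have more than $2ht$ red neighbours in both $Y_1$ and $Y_2$. I will use this abundance of red edges from $X_2$ to $Y_1\cup Y_2$, together with the hypothesis that $X_1$ is $ht$-\emph{red} (so \emph{blue-sparse}) to each $Y_i$, to produce a large monochromatic TCTF, contradicting Setting~\ref{mainsetforG}. The key structural fact I want to exploit is Lemma~\ref{lemma17}: first I would apply Claim~\ref{coroappendix1} to note that since $X_1,Y_1$ lie in distinct (blue resp.\ red) triangle components and $X_1$ is $ht$-red to $Y_1$, the pair $(X_1,Y_1)$ carries essentially no blue structure, and likewise $(X_1,Y_2)$; on the other hand the relevant pairs among $Y_1,Y_2$ and $X_2$ need to be analysed for whether they are blue-dense or red-dense.

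The main steps, in order, would be: (1) Pass to cleaned-up subsets $Y_i'\subseteq Y_i$ of size at least $(|Y_i|-\text{small})$ consisting of vertices that are $h^2t$-red (or $h^2t$-blue, whichever the relevant density dictates) to the other sets in play, discarding the $O(ht)$ exceptional vertices guaranteed by Claim~\ref{coroappendix1} and the minimum-degree condition. (2) Let $S\subseteq X_2$ be the set of $\ge 2ht$ vertices with $>2ht$ red neighbours in each of $Y_1,Y_2$; by averaging (as in the proof of Lemma~\ref{lemma17}), find a vertex $y_1\in Y_1'$ that is red-adjacent to at least two vertices $x_1,x_2$ of $S$ lying in a common blue clique $C$ of the partition. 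Wait — $x_1x_2$ is a \emph{blue} edge (same clique) but $x_1y_1,x_2y_1$ are red, so this only gives mixed triangles; instead I should use the red edges to build red triangles: since $y_1$ has many red neighbours in $Y_2'$ as well, and $x_1$ has many red neighbours in $Y_2'$, pick a common red neighbour $y_2\in Y_2'$ of $x_1$ and $y_1$, giving a red triangle $x_1y_1y_2$; repeating greedily over disjoint such triangles using the three sets $S$, $Y_1'$, $Y_2'$ produces a red triangle factor on roughly $3\min(|S|,|Y_1'|,|Y_2'|)/(\text{const})$ vertices. (3) Show this red triangle factor is triangle-connected: since any two vertices of $Y_1'$ share a red neighbour in $Y_2'$ (minimum degree plus both $Y_i$ being red-connected-ish via the cleaned sets) the $Y_1'$-rooted edges of consecutive triangles are red-triangle-connected, so the whole factor lies in one red triangle component. (4) If this TCTF already has $\ge 3(1+\epsilon)t$ vertices we contradict Setting~\ref{mainsetforG}; if not, we at least have a large red connected matching $M$ spanning $\ge 2(1+\epsilon)t$ vertices inside a set where the relevant pair is $\epsilon't$-red, and we invoke Lemma~\ref{lemmasuppforlemma4} or Lemma~\ref{tripartitetctf} to boost $M$ to a red TCTF on $\ge 3(1+\epsilon)t$ vertices — again a contradiction.

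The place where I expect the real work to concentrate is step~(2)–(3): I need to be careful that the triangles I build from $S$, $Y_1'$, $Y_2'$ are simultaneously (a) vertex-disjoint, (b) all red, and (c) in a single red triangle component, and to do this while the three sets have rather different sizes ($|S|$ could be as small as $2ht$ while $|Y_i|$ is at least $10ht$ but possibly much larger). The cleanest route is probably: greedily build disjoint red triangles with one vertex in each of $S,Y_1',Y_2'$ until $S$ is exhausted, getting $\ge |S|/1 \ge 2ht$ red triangles; connectivity then follows because consecutive triangles can be joined through a common red neighbour in $Y_1'$ (any two vertices of $Y_1'$ have $\ge |Y_2'|-2h^2t$ common red neighbours in $Y_2'$, by the cleaning in step~(1) and minimum degree), so the edges of the factor incident to $Y_1'$ all lie in one red triangle component. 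Finally, I should double-check that the hypothesis ``$X_1$ is $ht$-red to each $Y_i$'' is what forces the contradiction to be genuine — i.e.\ it rules out the escape route where the large red TCTF we found was actually already accounted for and consistent with no monochromatic TCTF; concretely, $X_1$ being red to $Y_1$ and $Y_2$ places $X_1$ together with the $Y_i$'s in a way that, combined with a red triangle component spanning $S\cup Y_1'\cup Y_2'$, lets us absorb extra vertices (from $X_1$ or elsewhere) to push past the $3(1+\epsilon)t$ threshold, using Lemma~\ref{lemmasuppforlemma4} with $X$ playing the role of the small side and $Y_1'\cup Y_2'\cup(\text{more})$ the large side.
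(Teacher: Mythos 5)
Your overall strategy does not match what the lemma actually needs, and the route you choose cannot be completed. You aim to contradict Setting~\ref{mainsetforG} by producing a monochromatic TCTF on at least $3(1+\epsilon)t$ vertices, but all the sets you build triangles from ($S$, $Y_1'$, $Y_2'$) have size only $O(ht)$, where $h$ is a small constant; the best you can hope for is a red TCTF on $O(ht)$ vertices, which is nowhere near $3(1+\epsilon)t$. Your step~(4) tries to close this gap by invoking Lemma~\ref{lemmasuppforlemma4}, but that lemma needs two sets of sizes roughly $t$ and $5t$ forming an $\epsilon t$-red pair, and nothing in the hypotheses of Lemma~\ref{lemma19} supplies such sets. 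The hypothesis that you never genuinely use -- and which carries the entire proof -- is that $Y_1$ and $Y_2$ lie in \emph{distinct red triangle-connected clique components}: the contradiction is not a large TCTF but a red triangle connection between those two components.

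The paper's argument is much shorter and runs as follows. Since $X_1$ and $X_2$ are in distinct \emph{blue} clique components, Claim~\ref{coroappendix1} (applied to the pair $(X_1,X_2)$, both blue -- not to $(X_1,Y_1)$ as you suggest, since that claim needs both sets to be in cliques of the same colour) gives that $(X_1,X_2)$ is $ht$-red. Assuming $|S|\ge 2ht$, one picks $x_1\in X_1$ with at most $ht$ blue neighbours in each of $X_2$, $Y_1$, $Y_2$; then $x_1$ has a red neighbour $x_2\in S$. Now $x_1$ and $x_2$ have at least $\tfrac{h}{4}t$ common red neighbours in each $Y_i$, and since there are only $O(t/\vass{\log\epsilon})$ cliques, pigeonhole gives \emph{two common red neighbours inside a single red clique} of $Y_1$, and likewise of $Y_2$. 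The resulting clique edges are each red triangle-connected to the red edge $x_1x_2$, so a clique of $Y_1$'s component is red triangle-connected to a clique of $Y_2$'s component -- contradiction. Note the pigeonhole into a single clique is essential: a red triangle merely touching $Y_1$ and $Y_2$ does not connect their clique components, so your step~(3) would not yield the contradiction even if carried out. Your proposal, as written, has a genuine gap and would need to be replaced by an argument of this form.
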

\begin{proof}
First, note that for $\epsilon$ small enough and by Claim \ref{coroappendix1} we have that $(X_1,X_2)$ is $ht$-red.
Let $S$ be the set of vertices in $X_2$ which have more than $2ht$ red neighbours in both $Y_1$ and $Y_2$. Assume by contradiction $\vass{S}\geq 2ht$. Note that there is a vertex $x_1$ in $X_1$ which has at most $ht$ blue neighbours in each of $X_2$, $Y_1$ and $Y_2$, so $x_1$ is red-adjacent to some vertex $x_2\in S$. Now $x_1$ and $x_2$ have at least $\frac{h}{4}t$ common red neighbours in each $Y_i$ and therefore they have at least two common red neighbours from the same clique in each of the $Y_i$. But this is absurd because it would mean that a clique in $Y_1$ is triangle connected to a clique in $Y_2$.
\end{proof}

\subsection{Some lower bounds}
\begin{claim}\label{nuovoclaimsuireali}
Let $k$ be a positive integer and let $b_1\geq  \dots{}\geq  b_k>0$ be positive reals such that $\sum_{i> 1} b_i > b_1$. Then we can partition $\llb 1, \dots{}, k\rrb$ into two sets $A$, $B$ such that if $\alpha:=\sum_{i\in A} b_i$ and $\beta:=\sum_{i\in B} b_i$ we have $2\alpha\geq \beta\geq \alpha$.
\end{claim}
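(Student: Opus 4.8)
The plan is to use a greedy longest-processing-time assignment. Since $b_1\ge b_2\ge\dots\ge b_k$ are already in nonincreasing order, I would process them in that order, placing each $b_i$ into whichever of two sets currently has the smaller sum (ties broken arbitrarily). Write $m_j\le M_j$ for the two partial sums after the first $j$ elements have been placed, and set $d_j=M_j-m_j$. First I would record the elementary one-step behaviour. If $m_j+b_{j+1}\le M_j$ (call this a \emph{type-1} step), then $M_{j+1}=M_j$, $m_{j+1}=m_j+b_{j+1}$ and $d_{j+1}=d_j-b_{j+1}\in[0,d_j)$; otherwise (a \emph{type-2} step) $m_{j+1}=M_j$, $M_{j+1}=m_j+b_{j+1}$ and $d_{j+1}=(m_j+b_{j+1})-M_j\in(0,b_{j+1}]$. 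Thus $d$ strictly decreases on every type-1 step, and a type-2 step at position $j+1$ resets it to at most $b_{j+1}$; also $m$ is nondecreasing on type-1 steps, and $M$ is unchanged on type-1 steps.

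Next I would let $\ell$ be the largest index at which a type-2 step occurs; this set is nonempty because step $1$ is type 2 (both sums start at $0$). The key claim, and the only place the hypothesis $\sum_{i>1}b_i>b_1$ is used, is that $\ell\ge 3$. Indeed, a type-2 step at position $2$ would require $b_2>b_1$, which is impossible; and if every step among $2,\dots,k$ were type 1, then $M$ would never change after step $1$, so $M_k=b_1$, while $m_k=\sum_{i>1}b_i$, and the type-1 condition applied at the last step gives $m_k\le M_k$, i.e.\ $\sum_{i>1}b_i\le b_1$, contradicting the hypothesis. Given $\ell\ge 3$: all steps after $\ell$ are type 1, so $d_k\le d_\ell\le b_\ell$; and from the type-2 step at $\ell$ we get $m_\ell=M_{\ell-1}\ge\frac12\sum_{i<\ell}b_i\ge\frac{\ell-1}{2}b_{\ell-1}\ge b_\ell$, using $\ell-1\ge 2$. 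Since $m$ is nondecreasing thereafter, $m_k\ge m_\ell\ge b_\ell$.

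Finally, taking $\alpha=m_k$, $\beta=M_k$, and letting $A,B\subseteq\{1,\dots,k\}$ be the index sets the greedy process produced (the one achieving sum $\alpha$ and the one achieving sum $\beta$), we have $\alpha\le\beta$ by construction and $\beta-\alpha=d_k\le b_\ell\le\alpha$, hence $\beta\le 2\alpha$, which is exactly the assertion. The main obstacle is really the identification of $\ell$: a naive attempt to take $A$ to be a prefix $\{1,\dots,p\}$ of the sorted list can fail (for instance $b=(0.3,0.3,0.3,0.1)$ has prefix sums jumping past the target interval), so one genuinely has to track which assignment last ``rebalanced'' the two sets and invoke the hypothesis at precisely that index; everything after that is routine bookkeeping.
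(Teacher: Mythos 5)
Your argument is correct, and it is a genuinely different greedy from the one in the paper. The paper seeds the partition with a hand-crafted base case on the first few indices (splitting according to whether $b_1+b_3\leq 2(b_2+b_4)$, or else taking a suitable prefix $\{2,\dots,\ell\}$ against $\{1\}$) so that the invariant $2\alpha\geq\beta\geq\alpha$ holds from the start, and then maintains that invariant at every subsequent step by adding $b_i$ to $B$ exactly when $2\alpha\geq\beta+b_i$. You instead run the standard longest-processing-time balancing rule (always add to the currently smaller pile), make no attempt to preserve the target inequality along the way, and verify it only at the end by locating the last ``rebalancing'' (type-2) step $\ell$: the final gap is at most $b_\ell$, while the smaller sum is at least $M_{\ell-1}\geq\tfrac{\ell-1}{2}b_{\ell-1}\geq b_\ell$ once you know $\ell\geq 3$, which is where the hypothesis $\sum_{i>1}b_i>b_1$ enters (it rules out $\ell=1$, and monotonicity rules out $\ell=2$). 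The bookkeeping in your one-step analysis ($d_{j+1}=d_j-b_{j+1}$ on type-1 steps, $d_{j+1}=b_{j+1}-d_j\leq b_{j+1}$ on type-2 steps) is accurate, and the degenerate cases $k\leq 2$ are vacuous since the hypothesis then contradicts $b_1\geq b_2$. What your route buys is the elimination of the paper's somewhat delicate initial case split, at the cost of a slightly longer terminal analysis; both are elementary and of comparable length.
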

\begin{proof}
We can construct such a partition greedily in two steps. 

If $b_1+b_3\leq 2(b_2+b_4)$ we set $1, 3\in B$ and $2, 4\in A$. Otherwise we set $b_1\in B$ and $2, 3,\dots,\ell\in A$ with an $\ell$ such that $b_1>\sum_{i=2}^\ell b_i> \frac{b_1}{2}$ (such an $\ell$ exists because of the hypotheses and because $b_1>b_2+b_3$).

We now proceed by induction. Assume we already partitioned $1, \dots{}, i-1$ such that the requests of the lemma are satisfied and let $\alpha$ and $\beta$ be as in the statement of the lemma. If $2\alpha\geq \beta+b_i$ we can add $i\in B$. Otherwise, we have $\beta> 2\alpha-b_i \geq \alpha + b_i$, where the last inequality is given by the fact that the $b_i$ are ordered in decreasing order and $\vass{A}\geq 2$. In this second case we can add $i$ to the set $A$.
\end{proof}

\begin{lemma}\label{biguniquelemma}%was  lemmalowerbound1
There exists $h_0>0$ such that for every $0<h<h_0$ there exists $\epsilon_0>0$ such that for all $0<\epsilon<\epsilon_0$ there exists $t_0$ such that for every $t>t_0$ we have the following. Let $G$ be a 2-edge coloured graph with $(9-\epsilon)t$ vertices and minimum degree at least $(9-2\epsilon)t$. Fix  any collection of red and blue cliques as in Setting \ref{mainsetforG} with parameters $\epsilon$ and $t$, and define $B_1,B_2,\dots$ and $R_1,R_2,\dots$ as in Setting~\ref{mainsetforG}.
\begin{enumerate}[label=(\roman*)]
 \item\label{sublemma1} If $\vass{B_1}\leq \frac{7}{6}t$ then $\vass{\bigcup_{i}B_i}\leq \left(\frac{7}{2}+h\right)t$. 
 \item\label{sublemma2} If $\vass{B_1}\geq \frac{7}{6}t$ and $\vass{B_2}\leq \frac{7}{6}t$ then $\big|\bigcup_{i\neq 1}B_i\big|\leq \left(\frac{7}{3}+h\right)t$. 
 \item\label{sublemma3} If $\vass{B_1}, \vass{B_2}\geq \frac{7}{6}t$ then $\vass{\cup_{i}B_i}\leq (\frac{16}{3}+h)t$. 
 \item\label{sublemma4} We have $\frac{43}{12}t\leq \vass{\cup_i B_i}, \vass{\cup_i R_i}\leq (\frac{16}{3}+h)t$. We also have $\vass{B_1}> \frac{7}{6}t$.
 \item\label{sublemma5} If $\vass{B_2}<\vass{\cup_{i\geq 3} B_i}$ then we can find a red TCTF in $\cup_i B_i$ of size at least $\frac{3}{2}\vass{\cup_{i\geq 2}B_i}-ht$.
 \item\label{sublemma6} If $\vass{B_2}\leq \frac{8}{7}t$ then $\vass{\cup_{i}B_i}<(\frac{9}{2}-h)t$. Hence at most one of $B_2$ or $R_2$ can be smaller than $\frac{8}{7}t$.
 \item\label{sublemma7} We have $\vass{B_2}\geq \vass{\cup_{i\geq3} B_i}$ and $\vass{R_2}\geq \vass{\cup_{i\geq 3}R_i}$. 
\end{enumerate}
The corresponding results also hold for red and $R_1, R_2, R_3, \dots{}$.
\end{lemma}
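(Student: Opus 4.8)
The plan is to work throughout under the standing hypotheses of Setting~\ref{mainsetforG} — in particular that $G$ contains no monochromatic TCTF on $3(1+\epsilon)t$ vertices — with a fixed clique decomposition, and to prove each of \ref{sublemma1}--\ref{sublemma7} by contradiction: were the stated inequality to fail, one produces the forbidden TCTF. Two ingredients recur. First, by Claim~\ref{coroappendix1} together with Setting~\ref{mainsetforG}, any union of blue cliques lying in pairwise distinct blue triangle-components behaves like a set in which almost all crossing edges are red; so if such cliques can be split into three roughly balanced blocks each of order $\gtrsim(1+\epsilon)t$, then Corollary~\ref{corotripartitetctf} (using the minimum-degree bound of $G$ to control the "missing" parameter $k$) yields a red TCTF on $3(1+\epsilon)t$ vertices — and symmetrically with the colours swapped. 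Second, vertex counting: since $|\Vbin|=o(t)$, we always have $\big|\bigcup_iB_i\big|+\big|\bigcup_iR_i\big|\ge(9-\epsilon)t-o(t)$, linking the two colours, and Lemma~\ref{upperboundsingle}, Lemma~\ref{upperbounddouble} already supply $|B_i|,|R_i|\le(\tfrac73+h)t$ and the fact that no two blue (nor two red) components both exceed $(2+h)t$. The balancing into blocks is driven by Claim~\ref{nuovoclaimsuireali}, which partitions a list of component sizes into two parts whose sums lie within a factor $2$; a third block is taken either to be the largest component itself (which \ref{sublemma4} shows has more than $\tfrac76t$ vertices) or to consist of leftover vertices of the larger of the two parts, always drawn from a different component so that crossing edges stay red.

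For \ref{sublemma1}--\ref{sublemma3} I would case on which of $|B_1|,|B_2|$ exceed $\tfrac76t$. If neither does then every blue component is $\le\tfrac76t$, and if moreover $\big|\bigcup_iB_i\big|>(\tfrac72+h)t$ then, after first noting that $\big|\bigcup_iB_i\big|\ge(\tfrac{16}3+3\epsilon)t$ already allows a balanced $3$-partition of the blue components and hence a red TCTF, we may assume $(\tfrac72+h)t<\big|\bigcup_iB_i\big|<(\tfrac{16}3+3\epsilon)t$; Claim~\ref{nuovoclaimsuireali} splits the blue components into blocks $A,B$ with $|A|\ge\tfrac13\big|\bigcup_iB_i\big|>(1+\epsilon)t$ and $|B|\ge\tfrac12\big|\bigcup_iB_i\big|$, the pair $(A,B)$ is $o(t)$-red, and $\big|V(G)\setminus\bigcup_iB_i\big|>\tfrac{11}3t-o(t)$; now either enough of those outside vertices are $ht$-red to both $A$ and $B$ (giving the third block, hence a red TCTF via Corollary~\ref{corotripartitetctf}), or almost all of them are $ht$-blue to $A$ or to $B$, in which case a pigeonhole step followed by the argument behind Lemma~\ref{lemma17} builds a blue TCTF hanging off $A\cup B$ — this dichotomy is the heart of \ref{sublemma1}. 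If $|B_1|\ge\tfrac76t>|B_2|$ then all other components are $\le\tfrac76t$, and $\big|\bigcup_{i\ne1}B_i\big|>(\tfrac73+h)t$ forces $|B_2|<\tfrac12\big|\bigcup_{i\ne1}B_i\big|$, so Claim~\ref{nuovoclaimsuireali} splits $\{B_2,B_3,\dots\}$ into two blocks while $B_1$ serves as the third, giving \ref{sublemma2}; \ref{sublemma3} is the same argument with $B_1,B_2$ as two of the blocks and $\bigcup_{i\ge3}B_i$ (split, or used whole) as the third, the constant $\tfrac{16}3$ being precisely the threshold beyond which the three blocks can all be made $\ge(1+\epsilon)t$.

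Given \ref{sublemma1}--\ref{sublemma3} for both colours, \ref{sublemma4} is bookkeeping: a three-case check gives $\big|\bigcup_iB_i\big|\le(\tfrac{16}3+h)t$ and likewise for red, whence $\big|\bigcup_iB_i\big|\ge(9-\epsilon)t-o(t)-\big|\bigcup_iR_i\big|\ge\tfrac{43}{12}t$ and symmetrically, and if $|B_1|\le\tfrac76t$ then \ref{sublemma1} forces $\big|\bigcup_iB_i\big|\le(\tfrac72+h)t$, hence $\big|\bigcup_iR_i\big|>(\tfrac{16}3+h)t$, contradicting the red upper bound, so $|B_1|>\tfrac76t$. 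For \ref{sublemma5}, $|B_2|<\big|\bigcup_{i\ge3}B_i\big|$ means $|B_2|<\tfrac12\sigma$ with $\sigma:=\big|\bigcup_{i\ge2}B_i\big|$, so $\{B_2,B_3,\dots\}$ splits into near-equal halves $U_2,U_3$ of size $\approx\tfrac\sigma2$; if $|B_1|\ge\tfrac\sigma2$ one matches $(U_2,U_3)$ in red and completes each edge to a triangle using a clique of $B_1$, obtaining (via Lemma~\ref{tripartitetctf}) a red TCTF inside $\bigcup_iB_i$ on $\tfrac32\sigma-ht$ vertices, while if $|B_1|<\tfrac\sigma2$ then $|B_1|<\tfrac13\big|\bigcup_iB_i\big|$, all blue components are small, and one instead completes the $(U_2,U_3)$-matching using leftover vertices of $U_2\cup U_3$ from other components to reach the same count. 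Then \ref{sublemma6}: if $\big|\bigcup_iB_i\big|\ge(\tfrac92-h)t$ while $|B_2|\le\tfrac87t$, then \ref{sublemma5} (via the no-TCTF assumption) forces $|B_2|\ge\big|\bigcup_{i\ge3}B_i\big|$, so $\big|\bigcup_{i\ge2}B_i\big|\le\tfrac{16}7t$, so $|B_1|\ge(\tfrac92-h-\tfrac{16}7)t>2t$ and $|B_2|\ge\tfrac12\big|\bigcup_{i\ge2}B_i\big|>(1+\epsilon)t$, whence $B_1$, $B_2$ and $\bigcup_{i\ge3}B_i$ are three blocks each of order $\ge(1+\epsilon)t$ and give a red TCTF; and if $|B_2|,|R_2|\le\tfrac87t$ simultaneously then $\big|\bigcup_iB_i\big|,\big|\bigcup_iR_i\big|<(\tfrac92-h)t$ contradicts the vertex count. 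Finally \ref{sublemma7}: if $|B_2|<\big|\bigcup_{i\ge3}B_i\big|$, then \ref{sublemma5} forces $\big|\bigcup_{i\ge2}B_i\big|<(2+h)t$, hence $\big|\bigcup_iB_i\big|<(\tfrac{13}3+2h)t$ and, by \ref{sublemma4}, $\big|\bigcup_iR_i\big|>(\tfrac{14}3-o(1))t$; the red analogue of \ref{sublemma5} cannot then also apply (it would give $\big|\bigcup_iR_i\big|<(\tfrac{13}3+2h)t$), so $|R_2|\ge\big|\bigcup_{i\ge3}R_i\big|$, and together with \ref{sublemma6} this pins $|R_1|,|R_2|,\big|\bigcup_{i\ge3}R_i\big|$ tightly enough that the red components split into three blocks each of order $\ge(1+\epsilon)t$, producing a blue TCTF — the desired contradiction.

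The main obstacle throughout is the situation where the blocks cannot be balanced because a single component is too large to fit, which is what makes the proof so case-heavy; parts \ref{sublemma5} and \ref{sublemma7} are the most delicate, since there one must push the red (resp. blue) TCTF up to essentially $3(1+\epsilon)t$ vertices using only the structure of one colour class, and the numerology is tight. The bulk of the work is in making the "leftover-recycling" step precise — tracking which component supplies the third vertex of each triangle, so that crossing edges stay red and all the triangles remain in a single triangle-component.
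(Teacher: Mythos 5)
There are genuine gaps, and the most important one is a missing idea. For parts \ref{sublemma1}, \ref{sublemma2} and \ref{sublemma5} the paper does not use balanced blocks plus Corollary~\ref{corotripartitetctf} at all: it observes that inside $\bigcup_i B_i$ (after pruning small cliques and atypical vertices) every vertex lies in a blue component of size at most $\tfrac76t$ (resp.\ at most $\tfrac12\big|\bigcup_{i\ge2}B_i\big|$ in \ref{sublemma5}), so its \emph{red} degree inside the union exceeds $\tfrac23$ of the union's size as soon as the union exceeds $\tfrac72t$ (resp.\ $\tfrac32\big|\bigcup_{i\ge2}B_i\big|$), and then Lemma~\ref{lemmatwothirdsappe} (Corr\'adi--Hajnal) gives a near-spanning red TCTF. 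Your block approach cannot replace this: if $\bigcup_iB_i$ consists of, say, four components each of size $0.9t$ (total $3.6t>\tfrac72t$), no partition into three blocks each of size $\ge(1+\epsilon)t$ exists, and your fallback dichotomy on outside vertices is not a real dichotomy (a vertex can have $\Theta(t)$ neighbours of each colour in a block) and in any case cannot supply a third block red to \emph{all} of $\bigcup_iB_i$. The same issue kills your \ref{sublemma5}: Claim~\ref{nuovoclaimsuireali} only guarantees a split with ratio between $1{:}1$ and $1{:}2$, so with three components of equal size $\sigma/3$ your ``near-equal halves'' are $\sigma/3$ and $2\sigma/3$ and the tripartite TCTF covers only $\sigma$, not $\tfrac32\sigma$, vertices. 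The point the tripartite framing misses is that edges between distinct blue components \emph{inside the same block} are also red, which is exactly what the minimum-degree argument exploits.

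Part \ref{sublemma7} is also not salvageable by your counting shortcut. From $\big|\bigcup_iR_i\big|>\tfrac{14}{3}t$, $|R_1|\le(\tfrac73+h)t$, $|R_2|\le(2+h)t$ and $|R_2|\ge\big|\bigcup_{i\ge3}R_i\big|$ you only get $\big|\bigcup_{i\ge3}R_i\big|\ge(\tfrac13-O(h))t$, and since $R_1$ and $R_2$ are single components that cannot be split across blocks, the red components need not admit three blocks each of size $\ge(1+\epsilon)t$; so no blue TCTF follows. The paper's proof of \ref{sublemma7} is a substantially longer structural argument: it partitions the blue components into three collections via Claim~\ref{nuovoclaimsuireali}, shows via Lemmas~\ref{lemma17} and~\ref{lemma19} that the red components cannot all attach in blue to the large red TCTF spanning the blue side, forces $\vass{R_{\ge3}}\ge(1-20h)t$, and only then derives the contradiction. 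Your treatments of \ref{sublemma3}, \ref{sublemma4} and \ref{sublemma6} are essentially the paper's (modulo their dependence on the earlier parts), but \ref{sublemma1}, \ref{sublemma2}, \ref{sublemma5} and \ref{sublemma7} each need a different argument from the one you propose.
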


\begin{proof}
We are going to prove these results in order, and we are sometimes going to use previous points already proved.

\underline{Proof or ~\ref{biguniquelemma}\ref{sublemma1}}: 
Suppose for a contradiction that $\vass{B_1}\leq\tfrac76t$ and $\vass{\bigcup_{i}B_i}> \left(\frac{7}{2}+h\right)t$. Observe that by Setting~\ref{mainsetforG} with $k=100$, all but at most $\frac{40000}{\vass{\log{\epsilon}}^{\frac{1}{2}}}t$ vertices of $G$ are in cliques fixed in Setting~\ref{mainsetforG} with at least $\frac{99}{100}m$ vertices. We let for each $i$ the set $B'_i$ consist of all vertices in blue cliques of $B_i$ with at least $\frac{99}{100}m$ vertices.

We want to study how many edges have endpoints in two distinct $B'_i$. For any fixed $i$, the maximum number of blue edges that have one endpoint in $B'_i$ and the other in some $B'_j$ with $j\neq i$, is less than
\[2m\cdot\frac{\vass{B'_i}}{\frac{99}{100}m}\cdot\frac{\vass{\bigcup_{j\neq i}B'_j}}{\frac{99}{100}m}\leq 3\frac{\vass{B'_i}\cdot \vass{\bigcup_{j\neq i}B'_j}}{m}\leq \frac{27}{m}t\vass{B'_i}\,.\]
Let us now observe that the number of vertices in $B'_i$ that have more than $\frac{h}{100}t$ blue neighbours outside of $B'_i$ is at most $\frac{27}{m}t\vass{B'_i}\cdot\frac{100}{ht}\leq \frac{10^4}{mh}\vass{B'_i}$.

Let us remove from each $B'_i$ all the vertices with more than $\frac{h}{100}t$ blue neighbours in $\bigcup_{j\neq i}B'_j$, let us call the result $B_i''$. By the last observation, we have that
\begin{align*}
 \Big|\bigcup_i B_i''\Big|&\geq \big(1-\tfrac{10^4}{mh}\big)\Big|\bigcup_i B'_i\Big|\\
 &\geq \big(1-\tfrac{10^4}{mh}\big)\Big(\Big|\bigcup_i B_i\Big|-\frac{40000}{\vass{\log{\epsilon}}^{\frac{1}{2}}}t\Big)\\
 &\geq \big(1-\tfrac{10^4}{mh}\big)\cdot \left(\tfrac{7}{2}+\tfrac{3h}{4}\right)t\\
 &\geq \big(\tfrac{7}{2}+\tfrac{3h}{4}-\tfrac{4\cdot 10^4}{mh}-\tfrac{10^4}{m}\big)t\\
 &\geq \big(\tfrac{7}{2}+\tfrac{h}{2}\big)t\,.
\end{align*}
 In $G^{Red}\big[\bigcup_i B''_i\big]$ every vertex has red degree at least $\vass{\bigcup_i B'_i}-(\frac{7}{6}+\epsilon+\frac{h}{100})t$ which is more than $\frac{2}{3}\vass{\bigcup_i B''_i}$. So by Lemma \ref{lemmatwothirdsappe}, $G^{Red}[\cup_i B_i'']$ contains a red TCTF of size $\frac{7}{2}t$.

\vspace{0.2cm}
\underline{Proof or ~\ref{biguniquelemma}\ref{sublemma2}}: 
Let $B^\ast_1$ be a set of the fixed blue cliques in $B_1$ covering between $\tfrac76t-m$ and $\tfrac76t$ vertices. We may assume $|B_2|\le |B^\ast_1|$, by swapping these two sets of cliques if necessary.

Repeating what we did in Lemma~\ref{biguniquelemma}\ref{sublemma1} to the sets $B^\ast_1,B_2,B_3,\dots$, we obtain $\big|B^\ast_1\cup\bigcup_{i\ge 2}B_i\big|\le\big(\tfrac72+h\big)t$. Since $|B^\ast_1|\le\tfrac76t$, we have $\big|\bigcup_{i\ge 2}B_i\big|\le\big(\tfrac73+h\big)t$ as desired.

\vspace{0.2cm}
\underline{Proof or ~\ref{biguniquelemma}\ref{sublemma3}}:
By Corollary \ref{corotripartitetctf} we have that $\vass{\cup_{i\geq 3} B_i}\leq (1+\frac{h}{3})t$ because otherwise we can find a red TCTF over more than $3(1+\epsilon)t$ vertices. By Lemmas \ref{upperboundsingle} and \ref{upperbounddouble} we have that $\vass{B_1}\leq (\frac{7+h}{3})t$ and $\vass{B_2}\leq (\frac{6+h}{3})t$.
Summing these bounds completes the proof.

\vspace{0.2cm}
\underline{Proof or ~\ref{biguniquelemma}\ref{sublemma4}}:
By Lemmas ~\ref{biguniquelemma}\ref{sublemma1}, \ref{sublemma2}, ~\ref{sublemma3} we have that for any possible size of $B_1$ and $R_1$ we always have $\vass{\cup_i B_i}, \vass{\cup_i R_i}\leq (\frac{16}{3}+h)t$. Because $\vass{\cup_i B_i}+\vass{\cup_i R_i}\geq (9-h)t$ we therefore must have $\frac{43}{12}t\leq \vass{\cup_i B_i}, \vass{\cup_i R_i}$. By Lemma ~\ref{biguniquelemma}\ref{sublemma1} this implies that $\vass{B_1},\vass{R_1}> \frac{7}{6}t$.

\vspace{0.2cm}
\underline{Proof or ~\ref{biguniquelemma}\ref{sublemma5}}:
%If $\vass{\cup_{i\geq 3} B_i}-\vass{B_2}>\frac{h}{2}t$, 
%%% This was the second case in the write-up. But it seems to me that this approach works always and we do not need two distinct cases.
Let us take a set of vertices $B_1'\subseteq B_1$ such that $\vass{B_1'}=\frac{1}{2}\vass{\cup_{i\geq 2} B_i}-\frac{1}{100}ht$ (we know that $B_1$ is large enough, indeed we know $\vass{B_1}\geq \frac{7}{6}t$ and it cannot be the case that $\vass{B_2}\geq \frac{7}{6}t$ because otherwise we would find a large red TCTF over $(B_1, B_2, \cup_{i\geq 3}B_i)$). By Claim \ref{coroappendix1} all but at most $\frac{1}{100}ht$ vertices of $B_1'$ have red degree in $G[B_1'\cup\bigcup_{i\geq 2}B_i]$ at least $\vass{\cup_{i\geq 2}B_i}-\frac{1}{100}ht$. Let $B_1''$ be a subset of size $\frac{1}{2}\vass{\cup_{i\geq 2} B_i}-\frac{2}{100}ht$ such that every vertex in $B_1''$ has red degree in $G[B_1''\cup\bigcup_{i\geq 2}B_i]$ at least $\vass{\cup_{i\geq 2}B_i}-\frac{1}{100}ht\geq \frac{2}{3}\vass{B_1''\cup\bigcup_{i\geq 2}B_i}$. Because every vertex in $\cup_{i\geq 2}B_i$ is in a triangle-connected component of size significantly smaller than $\frac{2}{3}\vass{B_1''\cup\bigcup_{i\geq 2}B_i}$ we can conclude by Lemma \ref{lemmatwothirdsappe} that we can find a red TCTF over all but at most two vertices of $B_1''\cup\bigcup_{i\geq 2}B_i$. Which is, we can find a red TCTF over at least $\frac{3}{2}\vass{\cup_{i\geq 2}B_i}-ht$ vertices.

\vspace{0.2cm}
\underline{Proof or ~\ref{biguniquelemma}\ref{sublemma6}}:
Fix some $h>0$ arbitrarily small, depending on which we can choose our $\epsilon$. By Lemma ~\ref{biguniquelemma}\ref{sublemma1} we can assume $\vass{B_1}\geq \frac{7}{6}t$. Remember also that we have by Lemma \ref{upperboundsingle}, $(\frac{7}{3}+h)t\geq \vass{B_1}$.
Assume by contradiction $\vass{B_2}\leq \frac{8}{7}t$ and $\vass{\cup_{i}B_i}\geq (\frac{9}{2}-h)t$. Then we would have $\vass{\cup_{i\geq 3}B_i}\geq (\frac{9}{2}-h)t-(\frac{7}{3}+h)t-\frac{8}{7}t=(\frac{43}{42}-2h)t$. By Corollary \ref{corotripartitetctf} and  Claim \ref{coroappendix1} it cannot be the case that $\vass{B_2}\geq (\frac{43}{42}-2h)t$ because otherwise we would find a large red TCTF over $(B_1, B_2, \cup_{i\geq 3}B_i)$. Therefore we must have $\vass{B_2}<\vass{B_3}$, and therefore by Lemma ~\ref{biguniquelemma}\ref{sublemma5} we must have that $\frac{3}{2}\vass{\cup_{i\geq 2}B_i}-ht< (3+h)t$ which is to say that $\vass{\cup_{i\geq 2}B_i}<\frac{25}{12}t$. We can conclude that $\vass{\cup_{i}B_i}<(\frac{7}{3}+h)t+\frac{25}{12}t<(\frac{9}{2}-h)t$.

\vspace{0.2cm}
\underline{Proof or ~\ref{biguniquelemma}\ref{sublemma7}}:
First, let us note that we cannot have both $\vass{B_2}< \vass{\cup_{i\geq3} B_i}$ and $\vass{R_2}< \vass{\cup_{i\geq3} R_i}$. Indeed, by ~\ref{biguniquelemma}\ref{sublemma6} at least one between $B_2$ and $R_2$ has cardinality at least $\tfrac{8}{7}t$. Let us say without loss of generality that $\vass{R_2}\geq \tfrac{8}{7}t$, then it cannot be $\vass{\cup_{i\geq 3}R_i}>\vass{R_2}$ because of Corollary \ref{corotripartitetctf}.

Let us now assume by contradiction that $\vass{\cup_{i\geq 3}B_i}>\vass{B_2}$. By Lemmas \ref{upperboundsingle} and \ref{upperbounddouble} we have that $\vass{R_1}\leq (\tfrac{7}{3}+h)t$ and $\vass{R_2}\leq (2+h)t$. Moreover, by Corollary \ref{corotripartitetctf} we have $\vass{R_3}\leq (1+h)t$. Therefore we have $\vass{B_2\bigcup\cup_{i\geq 3}B_i}\geq (\tfrac{4}{3}-5h)t$. 

By Claim \ref{nuovoclaimsuireali}, since both $B_3$ and $B_4$ are non-trivial (by our contradiction hypothesis), we can partition the sets $B_i$ into collections $B_1'$, $B_2'$ and $B_3'$ such that $B_1'=B_1$ and $\vass{B_2'}\geq \vass{B_3'}$ and also $\vass{B_2'}\leq \tfrac{2}{3}\vass{\cup_{i\geq 2}B_i}$. In particular this means $2\vass{B_3'}\geq \vass{B_2'}\geq \vass{B_3'}$ and $\vass{B_2'}\geq (\tfrac23-5h)t$ and $\vass{B_3'}\geq (\tfrac{4}{9}-5h)t$.

Notice that by Lemma~\ref{biguniquelemma}\ref{sublemma5} we have $\vass{B_2\bigcup\cup_{i\geq 3}B_i}\leq (2-2h)t$.
We claim that no blue clique in $B_1'$ is triangle connected to the blue TCTF in $(R_1, R_2, R_3)$. Indeed we have that this would create a blue TCTF of size at least $3\vass{R_3}+\vass{B_1}$ and we have $\vass{R_3}\geq 9t-\vass{B_1}-\vass{B_2\bigcup\cup_{i\geq 3}B_i}-\vass{R_1}-\vass{R_2}\geq(\tfrac13-5h)t$ and $\vass{R_3}+\vass{B_1}\geq(\tfrac{8}{3}-4h)t$. Which implies that $3\vass{R_3}+\vass{B_1}>(3+h)t$.

In particular, by Lemma \ref{lemma17} this implies that all but at most $ht$ vertices in $B'_1$ have less than $2ht$ blue neighbours in two of the $R_1, R_2$ or $R_{\geq 3}$. This means that there is a set $T\subset B_1'$ of size at least $\frac13\big(\vass{B_1'}-ht\big)$ such that $(T, R_i), (T, R_j)$ are $ht$-red and $i,j\in \llb 1,2,\geq 3\rrb$. Let us assume that $(T, R_2)$ is $ht$-red (if not, then we have $(T, R_1)$ is $ht$-red and this is strictly better in the following computations). We claim that $(R_2, B_2')$ and $(R_2, B_3')$ are $ht$-blue. Indeed by Lemma~\ref{biguniquelemma}\ref{sublemma5} and by the lower bound $\vass{B_2\bigcup\cup_{i\geq 3}B_i}\geq (\tfrac{4}{3}-5h)t$ we got earlier, we have a red TCTF in $B'_1\cup  B'_2\cup B'_3$ of size at least $(2-10h)t$, since $\vass{R_2}\geq \tfrac87t$ we must have that each clique in $R_2$ is not triangle connected to the large TCTF between the $B_i$ components. By Lemma \ref{lemma17} and since $(T, R_2)$ is red, we get that $(R_2, B_2')$ and $(R_2, B_3')$ are $ht$-blue.

We now claim that there is a $B_i$ in $B_2'$ such that $(B_i, R_{\geq3})$ is $ht$-red, in particular, this means that each red clique in $R_{\geq 3}$ is in the same triangle connected component of $(B'_1, B'_2, B'_3)$. There exists such a $B_i$ because $B_2'$ is formed by at least two distinct blue triangle-connected components, which cannot therefore be triangle connected among themselves. But we also know that $(B_2', R_2)$ is $ht$-blue, so if we had that more than one blue component in $B_2'$ has blue neighbours in $R_3$ we would get that these blue components are triangle connected.

Now we claim that we must have $\vass{R_{\geq 3}}\geq (1-20h)t$. As observed above, there is a red TCTF in $B'_1\cup B'_2\cup B'_3$ of size at least $\tfrac32\vass{B_2'\cup B_3'}$, and its triangles are triangle-connected in red to $R_{\ge3}$, we have a red TCTF of size $\tfrac32\vass{B_2'\cup B_3'}+\vass{R_3}-ht$, moreover, we have $\vass{R_3\cup B_3'\cup B_2'}\geq 9-\vass{B_1'\cup R_1\cup R_2}\geq (\tfrac{7}{3}-10h)t$ which gives us a red TCTF over more than $(3+h)t$ vertices, unless $\vass{R_3}\geq (1-20h)t$.

In particular, we can say that we can find a blue TCTF in $(R_1, R_2, R_{\geq 3})$ of size at least $3(1-20h)t$. Since we have already that $(R_2, B'_2)$ and $(R_2, B_3')$ are $ht$-blue, and since we cannot extend the blue TCTF in $(R_1, R_2, R_{\geq 3})$ at all, this means that $(R_1, B_2')$ and $(R_1, B_3')$ must be $ht$-red, but this is absurd since it would create a red TCTF in $(B_1', B_2', B_3')\cup R_1$ of size at least $\tfrac32\vass{B_3'\cup B_2'}+\vass{R_1}-ht>3(1+h)t$.
\end{proof}

%%%%%%%%%%%%%%%%%%%%%%%%%%%%%%%%%%%%%%%%%%%%%%%%%%%%%%%%%%%%%%%%%%%%%%%%%%%

\subsection{The sharp upper bound}
\begin{lemma}\label{lemmalastupperbound}
There exists $h_0>0$ such that for every $0<h<h_0$ there exists $\epsilon_0>0$ such that for all $0<\epsilon<\epsilon_0$ there exists $t_0$ such that for every $t>t_0$ we have the following. Let $G$ be a 2-edge coloured graph with $(9-\epsilon)t$ vertices and minimum degree at least $(9-2\epsilon)t$. Fix a  collection of red and blue cliques as in Setting \ref{mainsetforG} with parameters $\epsilon$ and $t$, and define $B_1,B_2,\dots$ as in Setting~\ref{mainsetforG}. We have that $\vass{B_1}, \vass{R_1}\leq (2+h)t$.
\end{lemma}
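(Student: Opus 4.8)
The plan is to argue by contradiction: assume $|B_1|>(2+h)t$ (the bound on $|R_1|$ then follows by the red--blue symmetry of all our hypotheses). The strategy is to show that a blue clique component this large must be "blue-linked" to a large red clique component, and that any such linkage yields a monochromatic TCTF on more than $3(1+\epsilon)t$ vertices, contradicting Setting~\ref{mainsetforG}.

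First I would assemble the size information. By Lemma~\ref{upperboundsingle} we have $|B_1|\le(\tfrac73+h)t$ and by Lemma~\ref{upperbounddouble} we have $|B_2|\le(2+h)t$. By Claim~\ref{coroappendix1}, distinct red (resp.\ blue) clique components are pairwise $ht$-blue (resp.\ $ht$-red); hence three such components, or three balanced unions of them, each of size at least $(1+2h)t$ would give a monochromatic TCTF of size $3(1+h)t$ by Corollary~\ref{corotripartitetctf}, so $|\bigcup_{i\ge3}R_i|\le(1+2h)t$ and $|\bigcup_{i\ge3}B_i|\le(1+2h)t$. Combined with Lemma~\ref{biguniquelemma} and the vertex count $|\bigcup_iB_i|+|\bigcup_iR_i|\ge(9-\epsilon)t-|\Vbin|$, this yields $|\bigcup_iR_i|\ge(\tfrac{11}{3}-O(h))t$, whence $|R_1|+|R_2|\ge(\tfrac83-O(h))t$ and $|R_1|\ge(\tfrac43-O(h))t$; by Lemma~\ref{biguniquelemma}\ref{sublemma6} we may moreover assume $|R_2|\ge\tfrac87t$, so both $R_1$ and $R_2$ have size more than $\max(10ht,(1+3\epsilon)t)$.

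The heart of the argument is the following linking mechanism: \emph{if $R_j$ is a red clique component with $|R_j|\ge(1+3\epsilon)t$ and the pair $(B_1,R_j)$ is $ht$-blue, then $G$ contains a blue TCTF of size $3(1+\epsilon)t$}. To see this, build a blue triangle factor inside $B_1\cup R_j$ using two kinds of triangles: those lying entirely in the blue cliques of $B_1$, and triangles $bb'r$ with $b,b'$ in a common blue clique of $B_1$ and $r\in R_j$ blue to both. Since all the edges $bb'$ lie inside the (pairwise triangle-connected) blue cliques of $B_1$, the whole factor is a single blue TCTF. Inserting greedily as many triangles $bb'r$ as possible --- each available $r$ being blue to all but at most $ht$ vertices of $B_1$, so the greedy step fails only once about $2ht$ vertices of $B_1$ remain free --- and then filling the rest of $B_1$ with internal triangles, one gets at least $\min\!\bigl(\tfrac12|B_1|-\tfrac13ht,\ \tfrac13(|B_1|+|R_j|-ht)\bigr)$ triangles, which is more than $(1+\epsilon)t$ once $|B_1|>(2+h)t$ and $|R_j|\ge(1+3\epsilon)t$. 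An analogous statement via Lemma~\ref{lemma17} shows $B_1$ cannot have $\ge ht$ vertices that are blue-heavy to two of three distinct red components of size at least $(1-h)t$: then $B_1$'s blue component would contain the blue TCTF on those three components, which could be enlarged by triangles inside $B_1$ past $3(1+\epsilon)t$.

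Applying this to $R_1$ and $R_2$ (and $R_3$ when it is large), we learn that $B_1$ is \emph{not} close to blue towards any large red component, so a positive fraction of $B_1$ must be red-heavy towards each of $R_1$ and $R_2$. The remaining step --- which I expect to be the hardest --- is to convert this into the contradiction on the red side by the mirror construction: a vertex of $B_1$ red-heavy towards $R_1$ is red-adjacent to at least two vertices of many red cliques of $R_1$, so its red edges into $R_1$ lie in $R_1$'s red triangle component; $R_1$'s component thereby absorbs the red-heavy part of $B_1$, and the same mixed triangle-factor construction performed in red gives a red TCTF, which is already too large when $|R_1|$ is close to $2t$. When $|R_1|$ and $|R_2|$ are both near $\tfrac43t$ the single-component constructions fall just short of $(1+\epsilon)t$, and one must use $B_1$, $R_1$, $R_2$ and $B_2$ together: Lemma~\ref{lemma19}, with $X_1=B_1$ red-heavy to two distinct red components, forbids $B_2$ from being red-heavy to the same pair, so a large part of $B_2$ is blue towards $R_1$ or $R_2$; the mixed blue construction on that part, or else a triangle count using the sharp bound $|\bigcup_iR_i|\ge(\tfrac{11}{3}-O(h))t$ together with $|B_2|\ge|\bigcup_{i\ge3}B_i|$, pushes one of the monochromatic TCTFs past $3(1+\epsilon)t$. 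The main obstacle is precisely this borderline regime, in which every component sits close to its extremal size and only the combined interaction of all of them forces the contradiction.
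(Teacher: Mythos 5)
Your overall strategy matches the paper's: assume $\vass{B_1}>(2+h)t$, use the earlier size bounds (Lemmas~\ref{upperboundsingle}, \ref{upperbounddouble}, \ref{biguniquelemma}) to force $R_1,R_2$ (and possibly $R_{\ge3}$, $B_{\ge3}$) to be large, show via greedy mixed triangle factors and Lemmas~\ref{lemma17} and~\ref{lemma19} that $B_1$ cannot be blue-linked to the large red components, and then derive a contradiction from the resulting red-heaviness. Your "linking mechanism" (mixing internal clique triangles with triangles $bb'r$) is essentially the paper's greedy construction of $T_B$ and its extensions, and the count you give for it is correct.

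However, there is a genuine gap, and you have located it yourself: the conversion of "$B_1$ is red-heavy towards $R_1$ and $R_2$" into a contradiction is not carried out, and this is where almost all of the work in the actual proof lives. The paper resolves it by a four-way case split (on whether $\vass{R_{\ge3}}\le ht$, whether $\vass{B_{\ge3}}\le ht$, whether $\vass{R_1}\le(2+h)t$, and on $\vass{R_2}$ versus $\tfrac87t$), each case requiring a separate chain of deductions about which whole components can or cannot be triangle-connected to the tripartite TCTFs on $(B_1,B_2,B_{\ge3})$ and $(R_1,R_2,R_{\ge3})$, before a final TCTF count closes the contradiction. Moreover, the one quantitative claim you do make in this step is wrong as stated: if $k$ vertices of $B_1$ are absorbed into $R_1$'s red component, the mixed red factor has about $\tfrac13\vass{R_1}+\tfrac13 k$ triangles, and with $\vass{R_1}\approx 2t$ and $k\le\tfrac13\vass{B_1}\approx\tfrac23 t$ (which is all that Lemma~\ref{lemma17} yields, since it only guarantees that a third of $B_1$ is red to a \emph{specific pair} of red components) this gives at most about $\tfrac89 t$ triangles, well short of $(1+\epsilon)t$. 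So the single-component red construction is never "already too large"; one is always in the borderline regime you flag, and the combined interaction argument you gesture at (via Lemma~\ref{lemma19} and $B_2$) is precisely the part that needs to be, and in the paper is, written out case by case. As it stands the proposal is a correct plan with the decisive step missing.
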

\begin{proof}
Let us denote with $B_{\geq 3}$ the set $\cup_{i\geq 3}B_i$ and similarly for red. By Lemmas \ref{upperboundsingle} and \ref{upperbounddouble} we can assume $\vass{B_1},\vass{R_1}\leq (\frac{7}{3}+h)t$ and $\vass{B_2},\vass{R_2}\leq (2+h)t$. Let us assume by contradiction that $\vass{B_1}\geq (2+h)t$. We construct greedily a blue TCTF $T_B$ as follows. Select an edge in a blue clique of $B_1$, and extend it (if possible) to a blue triangle using a vertex outside of $B_1$ not used yet in the process. We can repeat greedily until there are no blue edges in cliques of $B_1$ that can be extended outside of $T_B$. Let us denote with $Y_B$ the set of vertices $T_B\setminus B_1$ used to extend the edges in $B_1$, and let us denote with $B_1'$ the set $B_1\setminus T_B$ of remaining vertices.

Because $T_B$ is triangle connected, we have that the size of $T_B$ is smaller than $3(1+\epsilon)t$ and therefore in particular $\vass{B_1'}= \vass{B_1}-\vass{B_1\cap T_B}>\frac{h}{2}t$. %In the write up: "Since edges of the blue cliques in B_1 do not extend to blue triangles with vertices outside of B_1?Y_B it must be the case that each vertex in B_1 has at most 9t/m neighbours in V \ (B_1 ? Y_B)". This is very false, but because it is used to say that (B_1', V \ (B_1 ? Y_B) ) is \lambda t-red we should be fine.
Let $h':=\min\llb\frac{\vass{B'_1}}{200t}, h\rrb\geq h^{\tfrac32}$.

Because we stopped the greedy construction of $T_B$ only when we could not extend $T_B$ anymore, we have that every vertex in $V\setminus (B_1\cup Y_B)$ has at most as many blue neighbours in $B_1'$ as the number of cliques with at least two vertices that are in $B_1'$. This means that the number of blue edges in $(B_1',  V\setminus (B_1\cup Y_B))$ is at most $7t\cdot (\frac{7}{3}\frac{1}{\frac{99}{100}m}+\frac{k}{\vass{\log{\epsilon}}^{\frac{3}{2}}})t\leq (\frac{5}{\sqrt{m}}t)^2$. Therefore we have that the pair $(B_1',  V\setminus (B_1\cup Y_B))$ is $\lambda t$-red for $\lambda=\frac{5}{\sqrt{m}}$.

We now separate four cases.

\underline{Case A}: We already have $\vass{B_1}\geq (2+h)t$, and $\vass{B_1},\vass{R_1}\leq (\frac{7}{3}+h)t$, and $\vass{B_2},\vass{R_2}\leq (2+h)t$. Assume now $\vass{R_{\geq 3}}\leq ht$.

It follows that $\vass{\cup_{i} R_i}\leq (\frac{13}{3}+3h)t$, and from this it follows that $\vass{\cup B_i}\geq (\frac{14}{3}-4h)t$ and therefore $\vass{B_2\cup B_{\geq 3}}\geq (\frac{7}{3}-5h)t$. Since $\vass{B_2}\geq\vass{B_{\geq 3}}$ by ~\ref{biguniquelemma}\ref{sublemma7},  we have that $\vass{B_2}>(1+h)t$ and therefore by \ref{tripartitetctf}
 we have $\vass{B_{\geq 3}}<(1+h)t$. By what stated above, we also get:
 
 % It cannot be the case that $\vass{B_2}< \frac{7}{6}t$ because otherwise we would have a red TCTF over $\min\llb \frac{3}{2}\vass{B_2\cup B_{\geq 3}}-ht, 3\vass{B_{\geq 3}} \rrb\geq 3(1+\epsilon)t$ vertices by ~\ref{biguniquelemma}\ref{sublemma5} and ~\ref{tripartitetctf}. But this is absurd.
%     By considering the inequality 
%     $$\vass{B_{\geq 3}}\geq (9-h)t-\vass{B_1}-\vass{B_2}-\vass{R_1}-\vass{R_2}-\vass{R_{\geq 3}}$$
%     and the upper bounds we found in Lemmas \ref{upperboundsingle}, \ref{upperbounddouble} and we assumed in this point, 
we get the following:
    \begin{align*}\vass{B_{\geq 3}}\geq 
        \begin{cases}
            (\frac{8}{3}-5h)t-\vass{R_1}\\
            (\frac{7}{3}-5h)t-\vass{R_2}\geq (\frac{1}{3}-6h)t\\
            (\frac{14}{3}-4h)t-\vass{B_1}-\vass{R_2}
        \end{cases}
    \end{align*}
    Since $\vass{B_{\geq 3}}<(1+h)t$, we must have $\vass{R_2}>(\frac{4}{3}-6h)t$.
    Let us call $C_B$ the red triangle-connected component in $(B_1, B_2, B_{\geq 3})$ that by Corollary \ref{tripartitetctf} contains almost all red edges of $(B_1, B_{\geq 3})$ and $(B_2, B_{\geq 3})$. 
    \begin{claim}
    No red edge in a clique of $R_1$ or $R_2$ is red triangle connected to $C_B$.
    \end{claim}
    \begin{claimproof}
    If $R_1$ were red triangle connected to $C_B$ we could extend a large red TCTF of size $3\vass{B_{\geq 3}}-ht$ (which is given us by Corollary \ref{tripartitetctf}) using vertices of $R_1$ and obtain a TCTF over 
   \begin{align*}  
   3\vass{B_{\geq 3}}+\vass{R_1}-2ht &\geq 3((\frac{8}{3}-5h)t-\vass{R_1})+\vass{R_1}-2ht\\
   &=(8-17h)t-2\vass{R_1}>3(1+\epsilon)t
   \end{align*}
    vertices. It is also absurd that $R_2$ is red triangle connected to $C_B$. Indeed we would have:

\underline{\textit{Case 1}}: If $\vass{R_2}\leq \frac{17}{9}t$ then we have a red TCTF over $3\vass{B_{\geq 3}}+\vass{R_2}-ht\geq (7-16h)t-2\vass{R_2}>3(1+\epsilon)t$ vertices.

\underline{\textit{Case 2}}:  If $\vass{R_2}\geq \frac{17}{9}t$, we can greedily construct a TCTF $T$ as follows. We select edges in red cliques of $R_2\setminus Y_B$ and we extend them to disjoint triangles using vertices of $B_1'$. Because $(R_2\setminus Y_B,B_1')$ is $\lambda t$-red we have that we can continue this process until we almost finish red edges in red cliques of $R_2\setminus Y_B$ (we can have at most $ht$ vertices remaining in $R_2\setminus Y_B$) or vertices in $B_1'$ with enough red neighbours in $R_2\setminus Y_B$ (if we stopped because of this we have that at most $h't$ vertices in $B_1'$ are not used, because otherwise we would have vertices with high red degree in $R_2\setminus Y_B$ and because $\vass{R_2\setminus Y_B}$ would be at least $h't$ we could find an edge in a clique between two neighbours of the same vertex of $B_1'$). At this point we can extend $T$ with triangles from cliques of $R_2$ and obtain a TCTF over at least $\min\llb \vass{R_2}+\vass{B_1'}, \frac{3}{2}\vass{R_2\setminus Y_B}+\vass{R_2\cap Y_B}\rrb-3ht$ vertices. $T$ intersects $B_1$ in at most $t$ vertices, therefore we can again extend $T$ using the tripartition $(B_1\setminus T, B_2, B_{\geq 3})$, in this way we are adding at least $3\vass{B_{\geq 3}}-ht$ vertices since $\vass{B_3}\leq (1+h)t$, $\vass{B_2}> \frac{7}{6}$ and $\vass{B_1}\geq (2+h)t$. Therefore we end up with a red TCTF over $\min\llb \vass{R_2}+\vass{B_1'}, \frac{1}{2}\vass{R_2\setminus Y_B}+\vass{R_2}\rrb+3\vass{B_{\geq 3}}-4ht=3\vass{B_{\geq 3}}+\vass{R_2}+\min\llb \vass{B_1'}, \frac{1}{2}\vass{R_2\setminus Y_B}  \rrb-4ht$ vertices. We can notice at this point that 
\begin{align*}
\vass{B_1'}+\vass{B_{\geq 3}}&\geq \vass{B_{1}}+\vass{B_{\geq 3}}-(2+h)t\\
&\geq \frac{14}{3}t-4ht-\vass{B_2}-(2+h)t\\
&\geq (\frac{2}{3}-6h)t\,.    
\end{align*}
Since $\vass{B_{\geq 3}}\geq (\frac{1}{3}6h)t$ and $\vass{R_2\setminus Y_B}\geq \frac{5}{6}t$, we are done. Indeed we have $3\vass{B_{\geq 3}}+\vass{R_2}+\min\llb \vass{B_1'}, \frac{1}{2}\vass{R_2\setminus Y_B}  \rrb-4ht\geq 3(\frac{1}{3}6h)t+\frac{17}{9}t+\frac{1}{3}t>3(1+h)t$.
    \end{claimproof}

    Now we know that neither $R_1$ nor $R_2$ are triangle connected to the large triangle-connected component of the tripartition $(B_1, B_2, B_{\geq 3})$.
    In order to use Lemma \ref{lemma17} efficiently, we first need to remember that $B_1', R_1\setminus Y_B$ and $R_2\setminus Y_B$ are all non-trivial and that $(B_1',R_1\setminus Y_B)$ and $(B_1',R_2\setminus Y_B)$ are both $\lambda t$-red. Now we can use Lemma \ref{lemma17} to conclude that at most $2\lambda t$ vertices in $(R_1\cup R_2)\setminus Y_B$ can have more than $2\lambda t$ red neighbours in each of $B_2$ and $B_{\geq 3}$.
    But this is absurd because of Lemma \ref{lemma19}.
    
\underline{Case B}: We already have $\vass{B_1}\geq (2+h)t$, and $\vass{B_1},\vass{R_1}\leq (\frac{7}{3}+h)t$, and $\vass{B_2},\vass{R_2}\leq (2+h)t$. Assume now that $\vass{B_{\geq 3}}\leq ht$.

We can also assume that $\vass{R_1}\leq (2+h)t$ because otherwise we would be in the same situation as case A under switching colours. By Corollary ~\ref{biguniquelemma}\ref{sublemma4} we have $\vass{\cup_i B_i}\geq \frac{43}{12}t$ and this implies $\vass{B_2}\geq \frac{6}{5}t$. We can consider that $\vass{R_2\cup R_{\geq 3}}\geq (9-h)t-\vass{R_1}-\vass{\cup_i B_i}\geq (\frac{8}{3}-5h)t$, which gives us $\vass{R_{\geq 3}}\geq (\frac{2}{3}-6h)t$. By Lemma ~\ref{biguniquelemma}\ref{sublemma7} we have $\vass{R_2}\geq (\frac{4}{3}-3h)t$. By Corollary \ref{corotripartitetctf} this also implies that there is a red TCTF on $(R_1, R_2, R_{\geq 3})$ covering at least $3\vass{R_{\geq 3}}-ht\geq (2-19h)t$ vertices. This gives us the upper bound $\vass{R_{\ge3}}\le \frac{1+h}t$. This also implies that $\vass{R_2}\geq (\tfrac53-6h)t$.

Since both $B_1$ and $B_2$ are bigger than $\frac{8}{7}t$ we have that neither $B_1$ nor $B_2$ can be blue triangle connected to the large TCTF over $(R_1, R_2, R_{\geq 3})$. 
    
    By Lemma \ref{lemma17} this means that at most $ht$ vertices from each of $B_1$ and $B_2$ can be blue adjacent to more than $2ht$ vertices in any two of $R_1, R_2$ or $R_{\geq 3}$. But we know also that $B'_1$, $R_1\setminus Y_B$ and $R_2\setminus Y_B$ are non trivial, and therefore $(B_1', R_1\setminus Y_B)$ and $(B_1', R_2\setminus Y_B)$ are $\lambda t$-red. Hence, by Lemma \ref{lemma19} it can not not be the case that there are more than $2ht$ vertices of $B_2$ with more than $2ht$ red neighbours in both $R_1\setminus Y_B$ and $R_2\setminus Y_B$. Therefore by Lemma \ref{lemma17} there are at most $3ht$ vertices in $B_2$ which have more than $2ht$ blue neighbours in $R_{\geq 3}$.
    
    This means that we can find a set $S_1$ of at least $\frac{1}{2}\vass{B_2}-10ht$ vertices in $B_2$ such that every vertex in $S_1$ has at most $2ht$ blue neighbours both in $R_{\geq 3}$ % Maybe, but the concept of being $\lambda$-red is intrinsically symmetric, and it is not super clear where this symmetry comes from
    and one of $R_1\setminus Y_B$ or $R_2\setminus Y_B$ (say $R_2$, it is the same if it was $R_1$). Therefore by applying Lemma \ref{lemma19} with $S_1$ and $B_1'$ on one side and $R_2$ and $R_{\geq 3}$ on the other side, we get that there are at most $6h't$ vertices in $B_1'$ which have more than $3h't$ red neighbours in $R_{\geq 3}$, and this means that $(B_1', R_{\geq 3})$ is $6h't$-blue. % Maybe, but the concept of being $\lambda$-blue is intrinsically symmetric, and it is not super clear where this symmetry comes from
    By Lemma \ref{lemma17} we know that $(B_1', R_1)$ and $(B_1',R_2)$ are $9h't$-red, and in the same way we know that almost all the vertices of $B_2$ are $2h't$-red to one of $R_1$ or $R_2$. As an example, we are going to assume that we have a subset $S_2$ of $B_2$ of size at least $\frac{\vass{B_2}-20h't}{2}$ such that every vertex in $S_2$ has at most  $2h't$ blue neighbours to $R_2$.
    
    Therefore $(S_2, R_{\geq 3})$ and $(S_2, R_2)$ are $2h't$-red. Because $(B_1', R_1)$ and $(B_1', R_2)$ are both $9h't$-red, by lemma \ref{lemma19} we have that $(B_1, R_1)$ is $9h't$-red.
    By Lemma \ref{lemma17} as above, at most $6h't$ vertices in $B_1$ can have more than $2h't$ blue neighbours in any two of $R_1$, $R_2$ and $R_{\geq3}$. We can find $S'\subseteq B_1$ of size at least $\frac{\vass{B_1}-20h't}{2}$ that is either $10h't$-red to $R_{\geq 3}$ or to $R_{2}$.
    In the first case, we find a large red TCTF using triangles in $(S', S_2, R_{\geq 3})$ and then triangles in $B_1$. In the latter case, we can find a red TCTF on $(S_2, S', R_2)$ over at least $$2\cdot \min\llb\vass{S_2}, \vass{S'}, \vass{R_2}\rrb+\vass{R_2}-20h't$$ 
    vertices. We claim that this is enough, indeed we have $\vass{R_{\geq 3}}\leq (1+h')t$, and therefore we get the lower bound $(\frac{5}{3}-10h')t$ for $\vass{R_2}$ and $t-10h't$ for $\vass{S'}$. 
%    
%     Hence all but at most $2ht$ vertices of $B_1$ have at most $2ht$ blue neighbours to $R_1$. 
%    %Maybe also because B_1 has already many blue neighbours in R_{\geq 3}, and if this was not the case we would have B_1 connected to the triangle-
%    
%    If much of $B_1$ were $\lambda t$-red to $R_2$ we would have a large red triple using $B_1$, $R_2$ and part of $B_2$. Therefore some $B_1$ is $\lambda t$-red to $R_{\geq 3}$. %This seems to lead to the conclusion that it was really just B_1' before, but I do not see how this really works. It should work for B_1 as a whole as well.
%    Therefore $B_2$ must be $\lambda t$-blue to $R_1$ or $R_1$ and $R_{\geq 3}$ would be triangle connected. Therefore $B_2$ is $\lambda t$-red to $R_2$.%This is really not formal enough! I didn't write it more formally because I do not see where teh contradiction comes from
%
%  
    
\underline{Case C}: We already have $\vass{B_1}\geq (2+h)t$, and $\vass{B_1},\vass{R_1}\leq (\frac{7}{3}+h)t$, and $\vass{B_2},\vass{R_2}\leq (2+h)t$. Assume now $\vass{R_1}\leq (2+h)t$ and $\vass{B_{\geq 3}},\vass{R_{\geq 3}}\geq ht$.

We have two cases.

\underline{\textit{Case C.1}}: Let us assume $\vass{R_2}\leq \frac{8}{7}t$. 
\begin{claim}
Neither $B_1$ nor $B_2$ is blue connected to the TCTF over $(R_1, R_2, R_{\geq 3})$. Also, $R_1$ is not triangle connected to $(B_1, B_2, B_{\geq 3})$.
\end{claim}
\begin{claimproof}
By Corollary ~\ref{biguniquelemma}\ref{sublemma4} we have that $\vass{R_1\cup R_2\cup R_{\geq 3}}\geq \frac{43}{12}t$ and hence $\vass{R_2\cup R_{\geq 3}}\geq (\frac{19}{12}-h)t$. 

% Consider that if $\vass{R_2}\leq \vass{R_{\geq 3}}$ then by Lemma~\ref{biguniquelemma}\ref{sublemma5} we have a blue TCTF of size $\frac{3}{2}\vass{R_2\cup R_{\geq 3}}\geq \frac{3}{2}(\frac{19}{12}-h)t\geq \frac{9}{4}t$ with all triangles in the union $R_1\cup R_2\cup R_{\geq 3}$. Since $\vass{R_2}\le\tfrac{8}{7}t$ we have $\vass{B_2}>\tfrac87t$ by Lemma~\ref{biguniquelemma}\ref{sublemma6}, and hence $\frac{9}{4}t+\vass{B_1}$ and $\frac{9}{4}t+\vass{B_2}$ are both greater than $3(1+h)t$. Thus in this case $B_1$ and $B_2$ are not triangle connected to the large triangle connected component of blue triangles in the tripartition $(R_1, R_2, R_{\geq 3})$.
    
By Lemma~\ref{biguniquelemma}\ref{sublemma7} we have $\vass{R_2}\geq\vass{R_{\geq 3}}$ and by Lemma~\ref{biguniquelemma}\ref{sublemma6} we have $\vass{B_2}\geq \frac{8}{7}t$ and since $R_1,\dots,B_{\ge3}$ form a partition of $G$, we have $\vass{R_{\geq 3}}\geq (9-\frac{7}{3}-1-2-\frac{8}{7}-3h)t-\vass{B_2}>(\frac{5}{2}+h)t-\vass{B_2}$. By Corollary \ref{tripartitetctf} we can find a blue TCTF over $(R_1, R_2, R_{\geq 3})$ of size at least $3\vass{R_{\geq 3}}\geq\frac{15}{2}t-3\vass{B_2}$. In particular this implies that both $B_1$ and $B_2$ are not triangle connected to the blue TCTF over $(R_1, R_2, R_{\geq 3})$.
    
 We now prove that $R_1$ is not triangle connected to $(B_1, B_2, B_{\geq 3})$. 
 
 If $\vass{R_2\cup R_{\geq 3}}> (\frac{8}{7}+1+h)t$, 
 %then either $\vass{R_2}<\vass{R_{\ge3}}$, in which case by Lemma~\ref{biguniquelemma}\ref{sublemma5} we obtain a blue TCTF of size $(3+h)t$, or 
 then by Lemma~\ref{biguniquelemma}\ref{sublemma7} we have  $\vass{R_2}>\vass{R_{\ge3}}$, since $\vass{R_2}\le\tfrac87t$ we have $\vass{R_{\ge3}}\ge(1+h)t$ and by Corollary~\ref{corotripartitetctf} we again obtain a blue TCTF of size $(3+h)t$.
 
If on the other hand we have $\vass{R_2\cup R_{\geq 3}}\leq (\frac{8}{7}+1+h)t$, it follows that $\vass{B_{\geq 3}}\geq (9-\frac{7}{3}-2-\frac{8}{7}-1-4h)t-\vass{R_1}$ which means $3\vass{B_{\geq 3}}+\vass{R_1}\geq \frac{24}{7}t$. Therefore it cannot be that $R_1$ is red triangle connected to the large TCTF over $(B_1, B_2, B_{\geq 3})$.
\end{claimproof}

Since $R_1$ is not connected to $(B_1, B_2, B_{\geq 3})$ we have by Lemma \ref{lemma17} that at most $h^5t$ vertices in $R_1$ have more than $2h^5t$ red neighbours in two of the $B_i$. Since $R_1\setminus Y_B$ is nontrivial we have that $(B_1', R_1\setminus Y_B)$ is $\lambda t$-red. Therefore we must have that $(R_1\setminus Y_B, B_2), (R_1\setminus Y_B,B_{\geq 3})$ are $h^2t$-blue. We can now apply Lemma \ref{lemma17} again knowing that $B_2$ is not blue triangle connected to the blue triangle component over $(R_1, R_2, R_{\geq 3})$ and therefore at most $h^5t$ vertices of $B_2$ have more than $2h^5t$ blue neighbours in two of the $R_i$. Hence, $(B_2, R_2)$ and $(B_2,R_{\geq 3})$ are $h^2t$-red.
    
Since they are not red triangle connected among themselves, we have that either $R_2$ or $R_{\geq 3}$ is not red triangle connected to the red triangle component over $(B_1, B_2, B_{\geq 3})$. Let $R_{2}$ the one not red triangle connected, and $R_{\geq 3}$ the other one (if the situation is reversed we get better bounds). Then by Lemma \ref{lemma17} we have that $R_2$ is $h^2t$-blue to $B_1$ and $B_{\geq 3}$, and therefore by the same Lemma we have that $(B_1, R_{\geq 3})$ is $h^2t$-red. Then $(B_1, B_2, B_{\geq 3}\cup R_{\geq 3})$ is a dense red tripartition with $\vass{B_1}, \vass{B_2}\geq\frac{8}{7}t$. We have $\vass{B_{\geq 3}\cup R_{\geq 3}}\geq (9-\frac{7}{3}-2-2-\frac{8}{7}-3h)t\geq \frac{3}{2}t$ which is enough to conclude by Corollary \ref{tripartitetctf}.
    
\underline{\textit{Case C.2}}: Let us now assume $\vass{R_2}\geq \frac{8}{7}t$. 

Then both $R_1\setminus Y_B$ and $R_2\setminus Y_B$ are non trivial and $\lambda t$-red to $B_1'$. We cannot have that both $R_1$ and $R_2$ are red triangle connected to $(B_1, B_2, B_{\geq 3})$ (because otherwise they would be red triangle connected among themselves). By Lemma \ref{lemma17} this means that one between $R_1\setminus Y_B$ and $R_2\setminus Y_B$ must be $h^2t$-blue to both $B_2$ and $B_{\geq 3}$, we are going to work with the example in which $R_1\setminus Y_B$ is $h^2t$-blue to both $B_2$ and $B_{\geq 3}$ (it would be the same if we had $R_2$).
    
    We cannot have both $B_2$ and $B_{\geq 3}$ to be blue triangle connected to $(R_1, R_2, R_{\geq 3})$ (otherwise they would be in the same connected component) and therefore we split our case depending on whether or not $B_2$ is blue triangle connected to $(R_1, R_2, R_{\geq 3})$. 
    
    Let us assume that it is so. Then $B_{\geq 3}$ is not blue triangle connected to $(R_1, R_2, R_{\geq 3})$ and so $(B_{\geq 3},R_2)$ and $(B_{\geq 3}, R_{\geq 3})$ are $h^2t$-red. By Lemma \ref{lemma17} this implies that $R_2$ is red triangle connected to $(B_1, B_2, B_{\geq 3})$ and therefore $R_{\geq 3}$ is not. Therefore $(R_{\geq 3}, B_1)$ and $(R_{\geq 3}, B_2)$ are $h^2t$-blue. Therefore $(B_1, R_1)$ and $(B_1,R_2)$ must be $h^2t$-red. Therefore we can find a blue TCTF over $3\vass{R_{\geq 3}}+\vass{B_2}$ vertices by taking triangles from $(R_1, R_2, R_{\geq 3})$ and $B_2$. We can also find a red TCTF over $3\vass{B_{\geq 3}}+\frac{3}{2}\vass{R_2}$ vertices by taking triangles from $(B_1, B_2, B_{\geq 3})$ and by taking edges from $R_2$ and extending them with vertices from $B_1$. We conclude by taking the average of the size of these two TCTFs.
    
    Let us now assume that $B_2$ is not blue triangle connected to $(R_1, R_2, R_{\geq 3})$. Then $(B_2, R_2)$ and $(B_2, R_{\geq 3})$ are $h^2t$-red, since $(R_1\setminus Y_B, B_2)$ is $ht$ -blue, and this implies that $R_2$ is red triangle connected to $(B_1, B_2, B_{\geq 3})$. Therefore $R_{\geq 3}$ is $h^2t$-blue to $B_1$ and $B_{\geq 3}$ and so $B_{\geq 3}$ is blue triangle connected to $(R_1, R_2, R_{\geq 3})$. This also means that $B_1$ must be $h^2t$-red to both $R_1$ and $R_2$ in order not to be blue triangle connected to $(R_1, R_2, R_{\geq 3})$ but this leaves us with a dense red $(B_1, B_2, B_{\geq 3}\cup R_2)$.

\underline{Case D}: Let us assume that $\vass{R_1}\geq (2+h)t$ and both $B_{\geq 3}$ and $R_{\geq 3}$ contain more than $ht$ vertices (otherwise we are in the situation of case B). We can also assume without loss of generality that $\vass{B_{2}}\geq \vass{R_2}$ and therefore by Lemma \ref{biguniquelemma}\ref{sublemma6} we also have $\vass{B_2}\geq \tfrac{8}{7}t$.

We can greedily extend blue edges in cliques of $B_1$ to a blue TCTF $T_B$ by using vertices outside of $B_1$. Since $\vass{B_1}\geq (2+h)t$ we can either create a TCTF over more than $3(1+\epsilon)t$ vertices or we have to stop at some point. Since $\vass{B_1}\geq (2+h)t$, this means that $B_1\setminus T_B$ is non trivial. We can do the same with a red TCTF $T_R$ extending red edges in $R_1$ (since we are assuming $\vass{R_1}\geq (2+h)t$). Let us call $B_1':=B_1\setminus T_B$ and $R_1':=R_1\setminus T_R$. Since the TCTFs $T_R$ and $T_B$ are maximal, we have that $(B_1', V(G)\setminus T_B)$ is $ht$-red, while $(R_1, V(G)\setminus T_R)$ has to be $ht$-blue. In particular, there are non-trivial subsets $S_{B_1}\subseteq B_1$ of size at least $(1+\tfrac{h}{2})t$, $S_{B_2}\subseteq B_2$ of size at least $(\tfrac{1}{7}+\tfrac{h}{2})t$ and $S_{R_1}\subseteq R_1$ of size at least $(1+\tfrac{h}{2})t$ such that $(S_{B_1}, R_1')$ and $(S_{B_2}, R_1')$ are $ht$-blue and $(S_{R_1}, B_1')$ is $ht$-red.

There are two cases:

%\underline{Case D.1}: We have $\vass{R_1}, \vass{B_1}\geq (\tfrac{7}{3}-10h)t$ and $\vass{R_2},\vass{ B_2}\geq (2-10h)t$ and $\max\llb R_{\geq 3}, B_{\geq 3}\rrb\geq(1-10h)t$.

%Then we have $\vass{R_1'\cup B_{\geq 3}}\geq (1+h)t$ or $\vass{B_1'\cup R_{\geq 3}}\geq (1+h)t$, which implies that we can find a large monochromatic TCTF either in $(B_1, B_2, B_{\geq 3}\cup R_1')$ or in $(R_1, R_2, R_{\geq 3}\cup B_1')$.

\underline{Case D.1}: $B_1$ is blue triangle connected to the large TCTF in $(R_1, R_2, R_{\geq 3})$. Then we know that $B_2$ and $B_{\geq 3}$ are not triangle connected to the same TCTF. In particular, since $(S_{B_2}, R_1')$ is $ht$-blue, we must have that both $(S_{B_2}, R_2)$ and $(S_{B_2}, R_{\geq 3})$ are $ht$-red. Now, either $R_1$ is red triangle connected to the large TCTF in $(B_1, B_2, B_{\geq 3})$ or not. 

In the first case, we have that both $R_2$ and $R_{\geq 3}$ are not triangle connected to the large TCTF in $(B_1, B_2, B_{\geq 3})$. Because $(S_{B_2}, R_2)$ and $(S_{B_2}, R_{\geq 3})$ are $ht$-red, this means that $(R_{\geq 3}, B_{\geq 3}), (R_{\geq 3}, B_1)$ and $(R_{2}, B_{\geq 3}), (R_{2}, B_{1})$ are $ht$-blue, which is absurd because it would mean that $B_1$ and $B_{\geq 3}$ are in the same blue-connected component.

If $R_1$ is not red triangle connected to the large TCTF in $(B_1, B_2, B_{\geq 3})$, then $(S_{R_1}, B_{\geq 3})$ and $(S_{R_1}, B_2)$ have to be $ht$-blue. But now we get a contradiction since $(B_{\geq 3}, R_{\geq 3})$ and $(B_{\geq 3}, R_{2})$ need to be $ht$-red
 or otherwise $B_{\geq 3}$ is going to be triangle connected to the blue TCTF in $(R_1, R_2, R_{\geq 3})$, and also $(B_{2}, R_{\geq 3})$ and $(B_{2}, R_{2})$ need to be $ht$-red
 or otherwise $B_{2}$ is going to be triangle connected to the blue TCTF in $(R_1, R_2, R_{\geq 3})$.  This is enough to say that $R_{2}$ and $R_{\geq 3}$ are in the same red-connected component.

\underline{Case D.2}: $B_1$ is not blue triangle connected to the large TCTF in $(R_1, R_2, R_{\geq 3})$ but $R_1$ is red triangle connected to the large TCTF in $(B_1, B_2, B_{\geq 3})$. Since $B_1$ is not blue triangle connected to the large TCTF in $(R_1, R_2, R_{\geq 3})$ and because $(S_{B_1}, R_1')$ is $ht$-blue, we have that $(S_{B_1}, R_2)$ and $(S_{B_1}, R_{\geq 3})$ are $ht$-red. Now, since $R_1$ is red triangle connected to the large TCTF in $(B_1, B_2, B_{\geq 3})$ we have that $R_2$ and $R_{\geq 3}$ are not, because $(S_{B_1}, R_2)$ and $(S_{B_1}, R_{\geq 3})$ are $ht$-red this implies that $(B_2, R_2), (B_{\geq 3}, R_2)$ and  $(B_2, R_{\geq 3}), (B_{\geq 3}, R_{\geq 3})$ are $ht$-blue, which is absurd because it implies that both $B_2$ and $B_{\geq 3}$ are connected to the large TCTF in $(R_1, R_2, R_{\geq 3})$.

\underline{Case D.3}: $B_1$ is not blue triangle connected to the large TCTF in $(R_1, R_2, R_{\geq 3})$ and  $R_1$ is not blue triangle connected to the large TCTF in $(B_1, B_2, B_{\geq 3})$. In which case we notice that the blue cliques in $B_1$ are not triangle connected to the large blue TCTF in $(R_1, R_2, R_{\geq 3})$ and similarly the red cliques in $R_1$ are not triangle connected to the large red TCTF in $(B_1, B_2, B_{\geq 3})$. In particular, this implies that $(S_{B_1}, R_{\geq 3})$ and $(S_{B_1}, R_2)$ are $ht$-red, because we have that $(S_{B_1}, R_1')$ is $ht$-blue and $(R_1', \cup_{i\geq 2}R_i)$ is $ht$-blue. Likewise, we have  that $(S_{R_1}, B_{\geq 3})$ and $(S_{R_1}, B_2)$ are $ht$-blue. But this leaves us in a contradiction, indeed, neither $B_2$ nor $B_{\geq 3}$ can be triangle connected to $(R_1, R_2, R_3)$. Since $(S_{R_1}, B_{\geq 3})$ is $ht$-blue this means that $(R_2, B_{\geq 3})$ and $(R_{\geq 3}, B_{\geq 3})$ are $ht$-red. This is enough to get a contradiction, since we have $(R_2, B_{\geq 3})$ and $(R_{\geq 3}, B_{\geq 3})$ are $ht$-red but also $(S_{B_1}, R_{\geq 3})$ and $(S_{B_1}, R_2)$ are $ht$-red.

%We claim that $R_1$ is not red triangle connected to $(B_1, B_2, B_{\geq 3})$. Assume by contradiction this is not the case. Since by Lemma\ref{biguniquelemma}\ref{sublemma7} we have $\vass{B_2}\geq\vass{B_{\geq 3}}$, we can find a red TCTF on $3\vass{B_{\geq 3}}+\vass{B_1'}+\vass{R_1}-2ht\geq \frac{3}{2}(\vass{B_{\geq 3}}+\vass{R_{\geq 3}})+\vass{B_1}-2t+\vass{R_1}-4ht\geq (\frac{19}{6}-13h)t$ vertices by first taking edges in $R_1\setminus Y_B$ and expanding them with vertices in $B_1'$ and then by taking triangles in $(B_1, B_2, B_{\geq 3})$. 

% \underline{\textit{Case D.2}}: Assume that $\vass{B_2}<\vass{B_{\geq 3}}$. 

% Then we have a red TCTF on $\frac{3}{2}\vass{B_2\cup B_{\geq 3}}+\vass{B_1'}+\vass{R_1}-3ht\geq \frac{3}{2}\vass{B_2\cup B_{\geq 3}}+\vass{B_1}-2t+\vass{R_1}-5ht$ vertices (as before by fist taking red edges of $R_1$ and extending them to $B_1'$) and a blue TCTF on $3\vass{R_{\geq 3}}$ vertices. A weighted average of the two quantities (weighting the red TCTF $\frac{2}{3}$ and the blue one $\frac{1}{3}$) gives us a monochromatic TCTF on at least $(\frac{31}{9}-6h)t$ vertices.

% Since we know that $R_1$ is not red triangle connected to $(B_1, B_2, B_{\geq 3})$ we also have that $(R_1\setminus Y_B, B_2\cup B_{\geq 3})$ is $h^2t$-blue.
\end{proof}

\section{The colours of edges}\label{sec:finproof}
In this section we complete the proof of Lemma~\ref{MainLemma}. We first deduce an approximate version, proving that $B_{\ge3}\cup R_{\ge3}$ cannot have much more than $t$ vertices (which implies all components have roughly the correct size) and that most edges in various pairs have the `correct' colour. We then prove Lemma~\ref{MainLemma} by arguing that any edges with the `wrong' colour lead to triangle components which are much larger than they should be. The following is our approximate version.

\begin{lemma}\label{approxmainlemma}
There exists $h_0>0$ such that for every $0<h<h_0$ there exists $\epsilon_0>0$ such that for all $0<\epsilon<\epsilon_0$ there exists $t_0$ such that for every $t>t_0$ we have the following. Let $G$ be a 2-edge coloured graph with $(9-\epsilon)t$ vertices and minimum degree at least $(9-2\epsilon)t$. Fix  a collection of red and blue cliques as in Setting \ref{mainsetforG} with parameters $\epsilon$ and $t$, and define $B_1,B_2,\dots$ as in Setting~\ref{mainsetforG}. Then it holds:
\begin{itemize}
    \item $(2-h)t\leq \vass{B_1}, \vass{B_2}, \vass{R_1}, \vass{R_2}\leq (2+h)t$,
    \item $(1-h)t\leq \vass{B_{\geq 3}\cup R_{\geq 3}}\leq (1+h)t$,
    \item $\vass{G\setminus \cup_i (B_i\cup R_i)}\leq ht$,
    \item The following pairs are $h^2t$-blue: $(B_1,R_1)$, $(B_2,R_2)$, $(R_1,R_2)$, $(R_1, B_{\geq 3}\cup R_{\geq 3})$ and $(R_2, B_{\geq 3} \cup R_{\geq 3})$,
    \item The following pairs are $h^2t$-red: $(B_1,B_2)$, $(B_1, B_{\geq 3} \cup R_{\geq 3})$, $(B_2, B_{\geq 3} \cup R_{\geq 3})$, $(B_1,R_2)$ and $(B_2,R_1)$.
\end{itemize}
\end{lemma}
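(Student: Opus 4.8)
The plan is to pin down the component sizes from above, then determine the colour of every relevant pair, and finally recover the matching lower bounds from the fact that the listed sets exhaust $V(G)$ up to $\Vbin$.

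\emph{Upper bounds.} First I would collect what is already available: Lemmas~\ref{upperboundsingle}, \ref{upperbounddouble} and \ref{lemmalastupperbound} give $\vass{B_1},\vass{B_2},\vass{R_1},\vass{R_2}\le(2+h)t$, while Lemma~\ref{biguniquelemma}\ref{sublemma7} gives $\vass{B_1}\ge\vass{B_2}\ge\vass{B_{\ge3}}$ and $\vass{R_1}\ge\vass{R_2}\ge\vass{R_{\ge3}}$. By Claim~\ref{coroappendix1} the three cross pairs among the distinct blue triangle components $B_1,B_2,B_{\ge3}$ are nearly red (with a parameter one may take below $h^2t$ by shrinking $\epsilon$), so picking subsets of common size $\vass{B_{\ge3}}$ and applying Corollary~\ref{corotripartitetctf} produces a red TCTF on about $3\vass{B_{\ge3}}$ vertices; hence $\vass{B_{\ge3}}\le(1+h)t$, and symmetrically $\vass{R_{\ge3}}\le(1+h)t$. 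The third bullet is immediate since $G\setminus\bigcup_i(B_i\cup R_i)=\Vbin$ has at most $\epsilon^{1/2}t+40t/\sqrt m\le ht$ vertices by Setting~\ref{mainsetforG}. Claim~\ref{coroappendix1} also already gives that $(B_1,B_2)$, $(B_1,B_{\ge3})$, $(B_2,B_{\ge3})$ are nearly red and $(R_1,R_2)$, $(R_1,R_{\ge3})$, $(R_2,R_{\ge3})$ nearly blue; what remains is to determine the colours of the opposite-colour pairs $(B_i,R_j)$, $(B_i,R_{\ge3})$ and $(R_i,B_{\ge3})$.

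\emph{The colour pattern -- the main obstacle.} This is the technical core, and I would run essentially the case analysis from the proof of Lemma~\ref{biguniquelemma}\ref{sublemma7}. Note $\vass{B_1},\vass{R_1}>\tfrac76t>\tfrac87t$ by Lemma~\ref{biguniquelemma}\ref{sublemma4}, and by Lemma~\ref{biguniquelemma}\ref{sublemma6} at least one of $\vass{B_2},\vass{R_2}$ is at least $\tfrac87t$ (and if $\vass{R_2}<\tfrac87t$ then also $\vass{\cup_iR_i}<(\tfrac92-h)t$, which forces $\vass{B_2}>\tfrac32t$ and $\vass{B_{\ge3}}>(\tfrac12-2h)t$, so the other colour still has both of its first two components above $\tfrac87t$). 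Fix a colour, say blue, with both $\vass{B_1},\vass{B_2}\ge\tfrac87t$. Greedily extend blue edges of cliques of $B_1$ (and of $B_2$) to blue triangles; the size bounds then prevent $B_1$ or $B_2$ from being blue-triangle-connected to the blue TCTF on $(R_1,R_2,R_{\ge3})$ (which Corollary~\ref{corotripartitetctf} provides), since such a connection would yield a monochromatic TCTF on more than $3(1+\epsilon)t$ vertices, and the uncovered leftovers $B_1',B_2'$ are nearly red to all of $V(G)$. Lemma~\ref{lemma17} then forces each of $B_1,B_2$ to be nearly red to two of $R_1,R_2,R_{\ge3}$, and feeding $B_1',B_2'$ and the non-connectedness into Lemma~\ref{lemma19} pins these down to be $\{R_1,R_{\ge3}\}$ for one of them and $\{R_2,R_{\ge3}\}$ for the other; say $(B_1,R_2)$, $(B_2,R_1)$, $(B_1,R_{\ge3})$, $(B_2,R_{\ge3})$ are nearly red. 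Then $(B_1,R_1)$ and $(B_2,R_2)$ must be nearly blue, as otherwise $B_1$, resp.\ $B_2$, would be blue-connected to $(R_1,R_2,R_{\ge3})$. The colour-swapped version of the whole argument (valid by the symmetry of the target configuration under swapping the two colours, and handled through the $\vass{R_2}<\tfrac87t$ subcase when that swap is not directly available) then yields $(R_1,B_{\ge3})$ and $(R_2,B_{\ge3})$ nearly blue. Combining with the free facts above gives that $(B_i,B_{\ge3}\cup R_{\ge3})$ is nearly red and $(R_i,B_{\ge3}\cup R_{\ge3})$ nearly blue, and shrinking $h$ at the outset turns all of these into ``$h^2t$'', giving the fourth and fifth bullets. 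As in Lemma~\ref{biguniquelemma}\ref{sublemma7}, the delicate part throughout is the bookkeeping of which component is or is not connected to which monochromatic tripartite TCTF, which splits into several subcases; this is the step I expect to consume most of the work.

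\emph{Sharpening the sizes.} With the colour pattern established, use whichever of the monochromatic tripartitions $(B_1,B_2,B_{\ge3}\cup R_{\ge3})$ (all cross pairs $h^2t$-red) or $(R_1,R_2,B_{\ge3}\cup R_{\ge3})$ (all cross pairs $h^2t$-blue) has its first two parts of size at least $\tfrac87t$. If $\vass{B_{\ge3}\cup R_{\ge3}}$ exceeded the size of the second part, Corollary~\ref{corotripartitetctf} applied to equal-sized subsets would produce a monochromatic TCTF on more than $3\cdot\tfrac87t=\tfrac{24}{7}t>3(1+\epsilon)t$ vertices -- impossible -- so $\vass{B_{\ge3}\cup R_{\ge3}}$ is at most the size of that second part, and Corollary~\ref{corotripartitetctf} applied to the full tripartition then yields a monochromatic TCTF on about $3\vass{B_{\ge3}\cup R_{\ge3}}$ vertices, forcing $\vass{B_{\ge3}\cup R_{\ge3}}\le(1+h)t$. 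Finally, since $B_1,B_2,R_1,R_2,B_{\ge3},R_{\ge3}$ partition $V(G)\setminus\Vbin$, their sizes sum to at least $(9-h)t$; comparing with the upper bounds $(2+h)t$ on $\vass{B_1},\vass{B_2},\vass{R_1},\vass{R_2}$ and $(1+h)t$ on $\vass{B_{\ge3}\cup R_{\ge3}}$ shows each of these five quantities is within $6ht$ of its upper bound, and a final rescaling of $h$ at the start delivers $(2-h)t\le\vass{B_1},\vass{B_2},\vass{R_1},\vass{R_2}\le(2+h)t$ and $(1-h)t\le\vass{B_{\ge3}\cup R_{\ge3}}\le(1+h)t$, completing the proof.
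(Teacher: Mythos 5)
Your overall architecture (collect the upper bounds, determine the colour pattern via Lemmas~\ref{lemma17} and~\ref{lemma19} and the non-connection of components to the large tripartite TCTFs, then recover the lower bounds by summing) matches the paper's, and your first and last blocks are essentially sound. But the core step — establishing the colour pattern — has a genuine gap in the choice of pivot. You fix the colour (say blue) for which $\vass{B_1},\vass{B_2}\ge\tfrac87t$ and assert that ``the size bounds prevent $B_1$ or $B_2$ from being blue-triangle-connected to the blue TCTF on $(R_1,R_2,R_{\ge3})$.'' That TCTF has only about $3\vass{R_{\ge3}}$ vertices, so a connection yields a blue TCTF on roughly $\vass{B_i}+3\vass{R_{\ge3}}\le(\tfrac73+h)t+3\vass{R_{\ge3}}$ vertices; since nothing at this stage prevents $R_{\ge3}$ from being empty (all of the roughly $t$ leftover vertices could lie in $B_{\ge3}$), this need not exceed $3(1+\eps)t$, and Lemma~\ref{lemma17} cannot even be invoked with $R_{\ge3}$ as one of the $Y_i$. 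Relatedly, your claim that the greedy extension leaves ``uncovered leftovers $B_1',B_2'$ nearly red to all of $V(G)$'' only holds when the greedy process is forced to halt with vertices of $B_i$ unused, which requires $\vass{B_i}>2(1+\eps)t$ — exactly the hypothesis of Lemma~\ref{lemmalastupperbound} that has just been excluded. With $\vass{B_i}$ merely $\ge\tfrac87t$ the process can exhaust $B_i$ entirely, leaving no leftover and no $\lambda t$-red pair to feed into Lemma~\ref{lemma19}.

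The paper sidesteps both problems by normalising differently: after the sharp upper bounds force $\vass{B_{\ge3}\cup R_{\ge3}}\ge(1-5h^{3/2})t$, it assumes WLOG $\vass{B_{\ge3}}\ge\vass{R_{\ge3}}$, so $\vass{B_{\ge3}}\ge(\tfrac12-o(1))t$ is guaranteed large, and then shows $R_1,R_2$ are not \emph{red}-connected to the red tripartite TCTF on $(B_1,B_2,B_{\ge3})$. The size computation there is not immediate either — if $3\vass{B_{\ge3}}+\vass{R_i}<(3+h)t$ one deduces that $\vass{R_{\ge3}}$, and hence $\vass{B_{\ge3}}$, must be close to $t$, which via Lemma~\ref{biguniquelemma}\ref{sublemma7} forces $\vass{R_i}$ to be tiny, contradicting the lower bounds on $\vass{R_1},\vass{R_2}$. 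From that non-connection, Lemmas~\ref{lemma17} and~\ref{lemma19} give $(R_1,B_1),(R_1,B_{\ge3}),(R_2,B_2),(R_2,B_{\ge3})$ nearly blue, the blue TCTF on $(R_1,R_2,B_{\ge3}\cup R_{\ge3})$ then delivers the size bounds, and only afterwards are $(B_1,R_2),(B_2,R_1)$ and, when $R_{\ge3}$ is non-trivial, $(B_i,R_{\ge3})$ determined. To repair your write-up you would need to replace your pivot by this one (or otherwise supply an a priori lower bound on $\vass{R_{\ge3}}$, which is not available).
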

\begin{proof}
By Lemma \ref{lemmalastupperbound} we know that for $\epsilon>0$ small enough we have $\vass{B_1}, \vass{B_2},\vass{R_1},\vass{R_2}\leq (2+h^{\frac{3}{2}})t$ and therefore we have $\vass{B_{\geq 3} \cup R_{\geq 3}}\geq (1-5h^{\frac{3}{2}})t$. Without loss of generality, let us assume $\vass{B_{\geq 3}}\geq \vass{R_{\geq 3}}$.

\begin{claim}
We have that $R_1$ and $R_2$ are not red triangle connected to $(B_1, B_2, B_{\geq 3})$. Moreover, without loss of generality, we have $(R_1, B_1), (R_1, B_{\geq 3})$ and $(R_2, B_2), (R_2, B_{\geq 3})$ are $h^2t$-blue.
\end{claim}
\begin{claimproof}
Notice that $\vass{B_{1}}\geq \frac{(1-5h^{\frac{3}{2}})t}{2}$. Let us consider first that $R_1$ and $R_2$ are not red triangle connected to $(B_1, B_2, B_{\geq 3})$. Indeed, assume this is not the case and we have $3\vass{B_{\geq 3}}+\vass{R_i}<(3+h)t$ for some $i\in \llb 1,2\rrb$. Then we have $\vass{R_1}+\vass{R_2}+\vass{B_{\geq 3}}<(3+h+2+h^{\frac{3}{2}})t-2\vass{B_{\geq 3}}<(4-3h)t$ which is clearly absurd because it implies $\vass{B_1}+\vass{B_2}+\vass{R_{\geq 3}}\geq (5+2h)t$.

 We now claim that there is an ordering $i,j,k$ of $\llb 1, 2, \geq 3\rrb$ such that $(R_1, B_i), (R_1, B_j)$ and $(R_2, B_k), (R_2, B_j)$ are $h^2t$-blue.
    Indeed, by Lemma \ref{lemma17} we know that up to removing at most $h^5t$ vertices from each of $R_1$ and $R_2$, every vertex in $R_1\cup R_2$ has many blue edges in at least two among $\llb B_1, B_2, B_{\geq 3}\rrb$. This means that we can partition (not in a unique way) almost all the vertices of $R_1$ among the sets $S^{R_1}_{B_h}$, where the vertices in $S^{R_1}_{B_h}$ have their red neighbour in $\cup_\ell B_\ell$ contained in $B_h$. We define similarly $S^{R_2}_{B_h}$. We claim that just one of the $S^{R_1}_{B_h}$ is not trivial.
    
    Assume by contradiction that $S^{R_1}_{B_i}$ and $S^{R_1}_{B_j}$ have size at least $ht$. We cannot have that $S^{R_2}_{B_i}$ or $S^{R_2}_{B_j}$ have size at least $ht$, because otherwise we would have that $B_j$ and $B_k$ or $B_i$ and $B_k$ are connected respectively. Therefore we must have that $S^{R_2}_{B_k}$ contains almost all the vertices of $R_2$ and in particular is not trivial.
    Therefore we have that $S^{R_1}_{B_i}$, $S^{R_1}_{B_j}$ and $S^{R_2}_{B_k}$ are not trivial, which gives us that both $B_i$ and $B_j$ are in the same triangle-connected component. This implies that just one of the $S_{B_h}^{R_1}$ is nontrivial, and by symmetry the same is true for $R_2$. Moreover, we have that $S_{B_{\geq 3}}^{R_i}$ is trivial, because otherwise we would find a large blue TCTF in $(B_1, B_2, R_i\cup B_{\geq3})$.
    
    Finally, since by Lemma \ref{lemma19} we cannot have that $R_1$ and $R_2$ are $h^2t$-blue to the same pair, we know that each of $R_1$ and $R_2$ is $h^2t$-blue to $B_{\geq 3}$ and one between $B_1$ and $B_2$. We are going to assume without loss of generality that $(R_1, B_1)$ and $(R_1, B_{\geq 3})$ are $h^2t$-blue, and that $(R_2, B_2)$ and $(R_2, B_{\geq 3})$ are $h^2t$-blue, as we wanted.

\end{claimproof}

By the claim, we have that $(R_1, B_1)$, $(R_1, B_{\geq 3})$ and $(R_2, B_2)$, $(R_2, B_{\geq 3})$ are $h^2t$-blue. In particular, this means that we can find a blue TCTF in $(R_1, R_2, B_{\geq 3}\cup R_{\geq 3})$. This gives us immediately that $\vass{B_{\geq 3}\cup R_{\geq 3}}\leq (1+h^{\frac{3}{2}})t$ and in particular $\vass{B_1},\vass{B_2},\vass{R_1},\vass{R_2}\geq (2-h)t$.

Also, we get that $(B_1, R_2)$ and $(B_2, R_1)$ are $h^2t$-red. This holds because otherwise we would have both $B_{\geq 3}$ and $B_{2}$ are in the same connected component, indeed, $(B_{\geq 3}, R_1)$, $(B_{\geq 3}, R_2)$ and $(B_2, R_2)$ are $h^2t$ blue.

Assume now that $\vass{R_{\geq 3}}\geq h^{\tfrac{3}{2}t}$. We have that $(R_1, B_1), (R_1, B_{\geq 3})$ and $(R_2, B_2), (R_2, B_{\geq 3})$ are $h^2t$-blue, this gives us that $B_{\geq 3}$ is blue triangle connected to $(R_1, R_2, R_{\geq 3})$ (which is a non-trivial TCTF) which in turn gives us that $B_1$ and $B_2$ are not. From this last fact we can conclude that $(B_1, R_{\geq 3})$ and $(B_2, R_{\geq 3})$ are all $h^2t$-red.

    %Let $S_{B_1}$ be the set of vertices in $B_1$ that has at least $h^2t$ blue neighbours in $R_2$. If $\vass{S_{B_1}}\geq h^2t$, then $S_{B_1}$ would contain a blue edge in a clique that is in the same blue triangle-connected component as a large triangle-connected triangle component in $(R_1, R_2, B_{\geq 3}\cup R_{\geq 3})$. But this would create a blue TCTF of size bigger than $3(1+h)t$.

So we have the construction that we wanted up to change the indices between $B_1$, $B_2$ and $R_1$, $R_2$ respectively.
\end{proof}

Let us now prove Lemma \ref{MainLemma} that we restate for convenience.

\MainLem*

\begin{proof}[Proof of Lemma~\ref{MainLemma}]
We are going to refine Lemma \ref{approxmainlemma} to obtain an exact result.

By Lemma \ref{approxmainlemma} we have that there exists $\delta_0>0$ such that for $\delta_0>h,\lambda>0$ there exists $\epsilon_0>0$ and $t_0\in\mathbb{N}$ such that for any $t>t_0$ and $\epsilon_0>\epsilon>0$ if $G$ is a 2-edge-coloured graph over $(9-\epsilon)t$ vertices with minimum degree at least $(9-2\epsilon)$ and without a monochromatic TCTF on at least $3(1+\epsilon)t$ vertices, then we can partition $V(G)$ in the sets $B_1, B_2, R_1, R_2, Z, T$ (where the $B_i$ and $R_i$ are as in Setting \ref{mainsetforG} and where where $Z=B_{\geq 3}\cup R_{\geq 3}$ and $T$ is the set of vertices which are not already counted) such that the following holds:
\begin{itemize}
    \item $(2-h)t\leq \vass{B_1},\vass{B_2},\vass{R_1},\vass{R_2}\leq (2+h)t$,
    \item $(1-h)t\leq \vass{Z}\leq (1+h)t$,
    \item $\vass{T}\leq ht$,
    \item The following pairs are $\lambda t$-blue: $(B_1,R_1)$, $(B_2,R_2)$, $(R_1,R_2)$, $(R_1, Z)$ and $(R_2, Z)$,
    \item The following pairs are $\lambda t$-red: $(B_1,B_2)$, $(B_1, Z)$, $(B_2, Z)$, $(B_1,R_2)$ and $(B_2,R_1)$.
\end{itemize}

We first need to slightly prune our sets. We start by removing from $B_1$ (and putting in $T$) the vertices with more than $\frac{1}{8}\lambda$ red neighbours to $R_1$ and the vertices with more than $\lambda$ blue neighbours to either $B_2, R_2$ or $Z$. We do the same to $B_2$, $R_1$ and $R_2$ accordingly to the colour of the pairs we are considering.

Up to reducing $\epsilon_0$, we are still respecting all the bounds on the sizes that we need for Lemma \ref{MainLemma}, but we have a slightly better result on the state of the ``problematic'' edges. Indeed, we know that there are no vertices outside $T$ that witness more than $\lambda$ ``problematic'' edges.

We now just need to prove that $G[B_1]$, $G[B_2]$, $G[R_1]$, $G[R_2]$ and  $(B_1,R_1)$, $(B_2,R_2)$, $(R_1, Z)$, $(R_2, Z)$, $(B_1, Z)$, $(B_2, Z)$, $(B_1,R_2)$, $(B_2,R_1)$ are entirely monochromatic.

The proof to show that $G[B_1]$, $G[B_2]$, $G[R_1]$, $G[R_2]$ are monochromatic have the same structure. Therefore without loss of generality we show that $G[B_1]$ is entirely blue. Assume by contradiction that we can find $u,v$ in $B_1$ such that $uv$ is red. By our earlier pruning we know that both $u$ and $v$ have at most $\frac{1}{8}\lambda$ blue neighbours in $R_2$. Therefore  $uv$ is triangle connected to one of the red cliques of $R_2$ (and therefore to all red cliques of $R_2$). Let us now prove that $uv$ is also triangle connected to the large red TCTF in $(B_1, B_2, Z)$ (which is enough to conclude since we would then be able to find a large triangle-connected triangle component). Almost all the red edges in $(\llb u\rrb, B_2)$ are triangle connected to $uv$, indeed, all but at most $\frac{1}{8}\lambda$ of them are in a red triangle with $uv$, the same holds for the red edges in $(\llb u\rrb, Z)$. This means that there are at most $\lambda$ vertices in either $B_2$ or $Z$ that witness a red edge in $(B_2, Z)$ which is not triangle connected to $uv$. But this is absurd, as we mentioned before, since it implies that a large red TCTF in $(B_1, B_2, Z)$ is triangle connected to $uv$.

Let us now prove that $(B_1,R_1)$, $(B_2,R_2)$, $(R_1, Z)$, $(R_2, Z)$, $(B_1, Z)$, $(B_2, Z)$, $(B_1,R_2)$, $(B_2,R_1)$ are entirely monochromatic. The structure of these proofs is always the same, so without loss of generality we prove that $(B_1, R_1)$ is monochromatic. Assume it is not, and let $uv$ be a red edge between $B_1$ and $R_1$ (with $u\in B_1$). We prove that $uv$ is triangle connected both to one clique of $R_1$ (and therefore all cliques of $R_1$) and to the large red triangle-connected component in $(B_1, B_2, Z)$, which is absurd since this would give a large red TCTF. 

We first show that $uv$ is triangle connected to $R_2$, let $w_1\in R_2$ such that $vw_1$ is an edge (which has to be red by our previous proof that $G[R_1]$ is entirely red). Then by our pruning we know that $u, v$ and $w_1$ share a red neighbour in $B_2$.
We now observe that if $w_2w_3$ is a red edge between $B_2$ and $Z$ (with $w_2\in B_2$) such that $w_2$ is a red neighbour of both $u$ and $v$ and $w_3$ is a red neighbour of $w_2$ and $u$, then $w_2w_3$ is triangle connected to $uv$. By the pruning we did earlier, we can say that most of the red edges between $B_2$ and $Z$ are triangle connected to $uv$, which is what we wanted.

Up to changing the roles of the clusters, the other proofs have the same structure.
\end{proof}

\section{Regularity Method: proofs of Lemma~\ref{regularityresult} and Theorem~\ref{thm:boundBW}}\label{sec:reg}

In this section we state the Regularity Lemma and Blow-up Lemma, and use them to deduce Lemma~\ref{regularityresult} and Theorem~\ref{thm:boundBW} from Lemma~\ref{MainLemma}.

\begin{definition}[density, $\eps$-regular]
Let $G$ be a graph and let $X, Y$ be disjoint subsets in $V(G)$. We define the \emph{density} $d(X,Y)$ between $X$ and $Y$ to be:
$$d(X,Y):=\frac{e(X,Y)}{\vass{X}\vass{Y}}\,.$$
Given $\epsilon>0$, we say that $(X,Y)$ is $\epsilon$-regular if for every $X'\subseteq X,\ Y'\subseteq Y$ such that $\vass{X'}>\epsilon \vass{X}$ and $\vass{Y'}>\epsilon\vass{Y}$ we have
$\vass{d(X',Y')-d(X,Y)}<\epsilon$.
\end{definition}

We use the following version of the Regularity Lemma.  We will apply this to the graph of red edges within $K_n$, and observe that if $(X,Y)$ is $\eps$-regular in red edges then, since the blue edges are the complement of the red edges, it is also $\eps$-regular in blue.

\begin{lemma}[Regularity Lemma]\label{RegularityLemma}
For every $\epsilon \in (0,1)$ there are $M, N_0\in\mathbb{N}$ such that the following holds. Let $G$ be a graph on $n \geq N_0$ vertices, then there is a partition $\llb V_0,\dots{}, V_m\rrb$ of $V(G)$ such that the following conditions hold. We have $|V_0|\le\eps^{-1}$ and $\eps^{-1}\le m\le M$. We have $|V_1|=\dots=|V_m|$. Finally, for any given $i\in[m]$, for all but at most $\eps m$ choices of $j\in[m]$ the pair $(V_i,V_j)$ is $\eps$-regular in $G$.
\end{lemma}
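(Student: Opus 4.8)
The plan is to prove this by Szemer\'edi's \emph{energy increment} method. For a partition $\mathcal{P}=\{W_1,\dots,W_k\}$ of $V(G)$ define its \emph{index} (mean square density)
\[
q(\mathcal{P})=\sum_{i,j=1}^{k}\frac{|W_i|\,|W_j|}{n^2}\,d(W_i,W_j)^2\in[0,1],
\]
with the convention $d(W_i,W_i)=0$. First I would record the two standard convexity facts, both consequences of the Cauchy--Schwarz inequality: (i) if $\mathcal{P}'$ refines $\mathcal{P}$ then $q(\mathcal{P}')\ge q(\mathcal{P})$; and (ii) its \emph{defect} form — if $(W_i,W_j)$ is \emph{not} $\eps$-regular, witnessed by $W_i'\subseteq W_i$, $W_j'\subseteq W_j$ with $|W_i'|>\eps|W_i|$, $|W_j'|>\eps|W_j|$ and $|d(W_i',W_j')-d(W_i,W_j)|\ge\eps$, then cutting $W_i$ into $\{W_i',W_i\setminus W_i'\}$ and $W_j$ into $\{W_j',W_j\setminus W_j'\}$ raises the contribution of this pair to $q$ by at least $\eps^4\cdot\frac{|W_i|\,|W_j|}{n^2}$.

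Next I would establish the \emph{increment step}: if $\mathcal{P}$ is an equipartition into $k$ parts for which at least $\eps k^2$ ordered pairs are irregular, then applying the defect cut of (ii) simultaneously to every irregular pair and taking the common refinement produces $\mathcal{P}'$ in which each part is split into at most $2^{k-1}$ pieces and $q(\mathcal{P}')\ge q(\mathcal{P})+\eps^5$; the bound follows by summing the per-pair gains $\eps^4\cdot\frac{|W_i||W_j|}{n^2}\approx\eps^4 k^{-2}$ over the $\ge\eps k^2$ irregular pairs, using (i) to see the individual gains are not cancelled. I would then iterate: start from any equipartition into $\lceil\eps^{-1}\rceil$ parts, and as long as the current partition is not $\eps$-regular apply the increment step; since $q\le1$ always and rises by at least $\eps^5$ each time, after at most $\eps^{-5}$ steps one reaches an $\eps$-regular partition. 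The number of parts is then at most an $\eps^{-5}$-fold iterated exponential of $\eps^{-1}$, which defines $M$, and $N_0$ is taken large enough to carry out all the cuts and equitisings.

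Finally I would do the routine cleanup to reach the exact stated form: trim each part down to a common size $\lfloor N/m\rfloor$ and sweep the bounded remainder into $V_0$ (removing a few vertices per part only degrades regularity negligibly, which is absorbed by running the argument with parameter $\eps/2$); the lower bound $m\ge\eps^{-1}$ is free from starting the iteration fine enough; and to upgrade ``all but $\eps m^2$ irregular pairs'' to ``every class is $\eps$-regular to all but $\eps m$ others'' one runs the iteration with a polynomially smaller parameter ($\eps'=\eps^2$, say) and handles the at most $\eps m$ classes that still have too many irregular partners, with the usual care to keep $V_0$ within the stated bound. The main obstacle is the defect Cauchy--Schwarz estimate of step (ii): extracting an \emph{absolute} increment (of order $\eps^4$, hence $\eps^5$ after averaging) from each irregular pair, uniformly over all densities and over the sizes of the witnessing subsets, is the heart of the proof; everything else is bookkeeping about refinements and part sizes.
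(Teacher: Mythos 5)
Your proposal is correct in outline, but it is doing substantially more work than the paper does, and the part of your argument that actually overlaps with the paper's content is only your final ``cleanup'' paragraph. The paper does not prove the Regularity Lemma at all: it cites Szemer\'edi's original theorem (which bounds the \emph{total} number of irregular pairs by $\eps m^2$) as a black box, and then derives the stated per-vertex variant by applying that theorem with parameter $\tfrac18\eps^2$ and moving every class incident to more than $\tfrac12\eps m$ irregular pairs into the exceptional set $V_0$ --- a two-line Markov-type deduction. Your energy-increment argument (the index $q(\mathcal{P})$, monotonicity under refinement, the defect Cauchy--Schwarz giving an $\eps^4$ gain per irregular pair, hence $\eps^5$ per round, hence termination in $\eps^{-5}$ rounds with a tower-type $M$) is precisely the standard proof of the black box, so nothing in it is wrong, but it is not needed here; what \emph{is} needed is exactly the parameter-shrinking reduction you sketch at the end, and there your ``$\eps'=\eps^2$, handle the bad classes'' matches the paper's $\tfrac18\eps^2$ deduction. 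One shared caveat: both your sketch and the paper's deduction place whole classes into $V_0$, which makes $|V_0|$ of order $\eps n$ rather than the $|V_0|\le\eps^{-1}$ literally written in the statement; this appears to be a slip in the statement itself (the later applications only use $|V_0|\le\eps n$), so your hedge ``with the usual care to keep $V_0$ within the stated bound'' cannot actually be discharged as written, but you are no worse off than the paper on this point.
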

This version follows from the original version of Szemer\'edi~\cite{SzeReg} (which is similar but bounds the total number of irregular pairs by $\eps m^2$ rather than the number of irregular pairs meeting a part) applied with parameter $\tfrac18\eps^2$, followed by removing parts incident to more than $\tfrac12\eps m$ irregular pairs (of which there are at most $\tfrac12\eps m$) to $V_0$; we leave the details to the reader.

Given $\eps,d>0$, a $2$-edge-coloured complete graph $G$ and a partition obtained by applying Lemma~\ref{RegularityLemma} with parameter $\eps$ to the subgraph of red edges, we define the \emph{$(\eps,d)$-reduced graph} of $G$ (with respect to the partition) to be the graph $H$ on vertex set $[m]$, in which an edge $ij$ is present if it is $\epsilon$-regular, and assigned the colour red if its density in red is at least $1-d$, blue if its density in blue is at least $1-d$, and otherwise purple. 

We will see that for the purposes of embedding a graph into $G$, we can treat purple edges as being either red or blue as we desire, so that a large TCTF in (red $\cup$ purple) edges, or in (blue $\cup$ purple) edges in the reduced graph implies the existence of the square paths and cycles in $G$ we need. In order to apply Lemma~\ref{MainLemma} in this setting, we deduce the following consequence, which roughly says that either we are done or we get essentially the same partition as in Lemma~\ref{MainLemma}, in particular there are very few purple edges.

\begin{lemma}\label{MainLemmaWithPurple}
For any $\delta>0$ there exists $\eps>0$  such that for any $t\geq \tfrac{1}{\eps}$, if $G$ is a $\llb\text{red, blue, purple}\rrb$-edge-coloured graph on $(9-\eps)t$ vertices with minimum degree at least $(9-2\eps)t$, then either there is a choice of a colour between blue and red such that if we colour all the purple edges of that colour we can find a monochromatic TCTF on at least $3(1+\eps)t$ vertices in $G$ or $V(G)$ can be partitioned in sets $\llb B_1, B_2, R_1, R_2, Z, T\rrb$ such that the following hold.
\begin{enumerate}[label=(\alph*)]
    \item\label{MLP:i} $(2-\delta)t\leq \vass{B_1}, \vass{B_2}, \vass{R_1}, \vass{R_2}\leq (2+\delta)t$,
    \item\label{MLP:ii} $(1-\delta)t\leq \vass{Z}\leq (1+\delta)t$,
    \item\label{MLP:iii} all the edges in $G[B_1]$ and $G[B_2]$ are blue, and all the edges in $G[R_1]$ and $G[R_2]$ are red,
    \item\label{MLP:iv} the pairs $(B_1,R_1)$, $(B_2, R_2)$, $(R_1, Z)$ and $(R_2,Z)$ are entirely blue. Moreover, the pairs $(B_1,R_2)$, $(B_2, R_1)$, $(B_1, Z)$ and $(B_2,Z)$ are entirely red,
    \item\label{MLP:v} the pair $(B_1, B_2)$ is $\delta t$-red, while the pair $(R_1, R_2)$ is $\delta t$-blue, and
    \item\label{MLP:vi} $\vass{T}\leq \delta t$.
\end{enumerate}
\end{lemma}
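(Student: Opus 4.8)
The plan is to apply Lemma~\ref{MainLemma} to the two $2$-edge-colourings of the underlying graph obtained from $G$: let $\widehat G$ be $G$ with every purple edge recoloured red, and $\widetilde G$ be $G$ with every purple edge recoloured blue. Both have $(9-\eps)t$ vertices and minimum degree at least $(9-2\eps)t$, so, choosing $h=\lambda$ small in terms of $\delta$ and then $\eps$ small (so that $t\ge 1/\eps$ forces $t$ past the threshold of Lemma~\ref{MainLemma}), that lemma applies to each. If it returns a monochromatic TCTF on at least $3(1+\eps)t$ vertices in $\widehat G$, this is a monochromatic TCTF of $G$ once all purple edges are coloured red (a blue object uses only genuine blue edges, a red object uses red-or-purple edges), which is the first alternative of the statement with ``choice $=$ red''; symmetrically for $\widetilde G$ and ``choice $=$ blue''. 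So we may assume Lemma~\ref{MainLemma} returns partitions $\widehat B_1\cup\widehat B_2\cup\widehat R_1\cup\widehat R_2\cup\widehat Z\cup\widehat T$ and $\widetilde B_1\cup\widetilde B_2\cup\widetilde R_1\cup\widetilde R_2\cup\widetilde Z\cup\widetilde T$ of $V(G)$.

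The crucial observation is that each completion reports one ``colour side'' of the extremal structure free of purple contamination. In $\widehat G$ the blue edges are exactly the blue edges of $G$, so from the partition of $\widehat G$ we learn that $G[\widehat B_1],G[\widehat B_2]$ and the pairs $(\widehat B_1,\widehat R_1),(\widehat B_2,\widehat R_2),(\widehat R_1,\widehat Z),(\widehat R_2,\widehat Z)$ are entirely blue \emph{in $G$}, and that $(\widehat R_1,\widehat R_2)$ is $\lambda t$-blue in $G$. Dually, in $\widetilde G$ the red edges are exactly the red edges of $G$, so from the partition of $\widetilde G$ we learn that $G[\widetilde R_1],G[\widetilde R_2]$ and the pairs $(\widetilde B_1,\widetilde R_2),(\widetilde B_2,\widetilde R_1),(\widetilde B_1,\widetilde Z),(\widetilde B_2,\widetilde Z)$ are entirely red in $G$, and that $(\widetilde B_1,\widetilde B_2)$ is $\lambda t$-red in $G$. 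If the two partitions essentially coincide, we can therefore take the output to be their common refinement, reading the ``blue'' requirements of \ref{MLP:iii}--\ref{MLP:v} off the partition of $\widehat G$ and the ``red'' ones off the partition of $\widetilde G$.

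The heart of the proof is thus a rigidity step: the two partitions must agree up to a relabelling within $\{B_1,B_2\}$, a relabelling within $\{R_1,R_2\}$, and $O((h+\lambda)t)$ vertices. Consider $\widehat B_1$, of size $(2\pm h)t$ with all interior edges blue in $G$. Using the minimum degree of $G$ together with the monochromaticity facts above: $\widehat B_1$ meets each of $\widetilde R_1,\widetilde R_2$ in at most $O(\eps t)$ vertices (an almost complete all-blue set cannot have many vertices inside an all-red set); it cannot meet two of $\widetilde B_1,\widetilde B_2,\widetilde Z$ in more than $O((\eps+\lambda)t)$ vertices each, since $(\widetilde B_1,\widetilde B_2)$ is $\lambda t$-red and each pair $(\widetilde B_i,\widetilde Z)$ is entirely red, so the cross-vertices would force a red edge inside $\widehat B_1$; and $|\widetilde Z|\le(1+h)t<(2-h)t\le|\widehat B_1|$ while $|\widetilde T|\le ht$. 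Hence all but $O((h+\lambda)t)$ vertices of $\widehat B_1$ lie in a single $\widetilde B_i$, and likewise for $\widehat B_2$; since $\widehat B_1,\widehat B_2$ are disjoint of size $\ge(2-h)t$ while $|\widetilde B_1|,|\widetilde B_2|\le(2+h)t$, they cannot sit in the same $\widetilde B_i$, so after relabelling $\widehat B_1\approx\widetilde B_1$ and $\widehat B_2\approx\widetilde B_2$. The symmetric argument, placing the all-red sets $\widetilde R_1,\widetilde R_2$ inside the partition of $\widehat G$ (whose obstructions are now $G[\widehat B_i]$ blue, $(\widehat R_j,\widehat Z)$ blue and $(\widehat R_1,\widehat R_2)$ $\lambda t$-blue), gives $\widetilde R_1\approx\widehat R_1$ and $\widetilde R_2\approx\widehat R_2$ after relabelling; the two relabellings are consistent because $(\widehat B_1,\widehat R_1)$ is entirely blue in $G$ whereas $(\widetilde B_1,\widetilde R_2)$ is entirely red, so $\widehat R_1$ cannot be $\approx\widetilde R_2$. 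Finally $\widehat Z$ and $\widetilde Z$, being the complements of the matched large sets together with the small sets $\widehat T,\widetilde T$, differ in only $O((h+\lambda)t)$ vertices.

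With the rigidity in hand, put $B_1:=\widehat B_1\cap\widetilde B_1$, $B_2:=\widehat B_2\cap\widetilde B_2$, $R_1:=\widehat R_1\cap\widetilde R_1$, $R_2:=\widehat R_2\cap\widetilde R_2$, $Z:=\widehat Z\cap\widetilde Z$, and let $T$ be the rest. The first five sets have the sizes demanded by \ref{MLP:i}--\ref{MLP:ii} up to $O((h+\lambda)t)$, and $T$ is a union of $O((h+\lambda)t)$-sized error sets with $\widehat T\cup\widetilde T$, so $|T|\le\delta t$ once $h,\lambda$ are small enough, giving \ref{MLP:vi}. For \ref{MLP:iii}--\ref{MLP:v}: $G[B_1]\subseteq G[\widehat B_1]$ and $(B_1,R_1)\subseteq(\widehat B_1,\widehat R_1)$ and their analogues are entirely blue in $G$; $G[R_1]\subseteq G[\widetilde R_1]$ and $(B_1,R_2)\subseteq(\widetilde B_1,\widetilde R_2)$, $(B_1,Z)\subseteq(\widetilde B_1,\widetilde Z)$ and their analogues are entirely red in $G$; and $(B_1,B_2)\subseteq(\widetilde B_1,\widetilde B_2)$ is $\lambda t$-red while $(R_1,R_2)\subseteq(\widehat R_1,\widehat R_2)$ is $\lambda t$-blue, which suffices once $\lambda\le\delta$. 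The main obstacle is the rigidity step: one must exploit that each completion hides all its purple edges inside a single colour class, so that the genuinely monochromatic parts of the two partitions together pin both of them down, and then check that the blue and red relabellings can be chosen compatibly; everything else is bookkeeping with the definitions of $r$-red/$r$-blue and the size estimates, plus a suitable choice of $h,\lambda$ (hence $\eps$) in terms of $\delta$.
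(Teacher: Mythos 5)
Your proposal is correct and follows essentially the same route as the paper: recolour the purple edges red and blue to get two $2$-colourings, apply Lemma~\ref{MainLemma} to each, read the genuinely blue conclusions off the purple-to-red completion and the genuinely red ones off the purple-to-blue completion, show the two partitions agree up to relabelling and $O((h+\lambda)t)$ vertices using the monochromatic sets and the size bounds, and take intersections. The only cosmetic difference is that the paper phrases the rigidity step by locating $R_1^b$ inside the $r$-partition (and $B_1^r$ inside the $b$-partition) with explicit constants, while you argue symmetrically with $O(\cdot)$ bounds; the content is the same.
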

\begin{proof}
Let $\epsilon$ be given by Lemma~\ref{MainLemma} for input $h=\lambda=\tfrac1{1000}\delta$; without loss of generality we may suppose $\delta$ is sufficiently small for this application.

Let $G$ be a coloured graph satisfying the conditions of the lemma, and suppose there is neither a red-purple TCTF over $3(1+\eps)t$ vertices nor a blue-purple TCTF over $3(1+\eps)t$ vertices.

Let $G^r$ be the graph obtained from $G$ by recolouring the purple edges red, and similarly $G^b$ by recolouring them blue. Let $R_1^r,R_2^r,B_1^r,B_2^r,X^r,T^r$ be the partition obtained by applying Lemma~\ref{MainLemma} to $G^r$, and define similarly the partition for $G^b$ replacing $r$ with $b$. Observe that if we swap $R^1_r$ and $R^2_r$, and also $B^1_r$ and $B_2^r$, we still have a partition satisfying the conclusion of Lemma~\ref{MainLemma}. If $\big|R_2^r\cap R_1^b\big|>\big|R_1^r\cap R^1_b\big|$, we perform this swap (and in an abuse of notation continue to use the same letters for the swapped classes).

We define $R_i:=R_i^r\cap R_i^b$ and $B_i:=B_i^r\cap B_i^b$ for each $i=1,2$, and $Z:=Z_r\cap Z_b$ and finally $T:=V(G)\setminus(B_1\cup B_2\cup R_1\cup R_2\cup Z)$. We will now prove this partition satisfies the conclusions of the lemma. Observe that the statements in~\ref{MLP:iii},~\ref{MLP:iv} and~\ref{MLP:v} about sets or pairs being entirely red, or $\delta t$-red, follow directly from the same statements for the partition of $G^b$, and the corresponding ones about being blue from the partition of $G^r$; what remains is to prove these sets have the correct sizes.

To begin with, observe that all edges in $R_1^b$ are red in $G^b$ and so also in $G$. It follows that $R_1^b$ intersects $B_i^r$ in at most one vertex for each $i=1,2$, since otherwise $B_i^r$ would contain a red edge. Thus $R_1^b$ has at least $(2-\frac{1}{100}\delta)t-2$ vertices which are not in $B_1^r\cap B_2^r$. These vertices cannot all be in $T^r\cup Z^r$, which is too small, so $R_1^b$ has a vertex in at least one of $R_1^r$ and $R_2^r$. Now $R^1_b$ cannot have vertices in $Z^r$, since all edges from $Z^r$ to $R_1^r\cup R_2^r$ are not red. It follows that all but at most $\tfrac1{1000}\delta t+2$ vertices of $R^1_b$ are in $R_1^r\cup R_2^r$, and by the observation above there are at least as many vertices in $R_1^r$ as $R_2^r$. Since $(R_1^r,R_2^r)$ is $\tfrac1{1000}\delta t$-blue, and all edges in $R^1_b$ are red, we see $R^1_b$ has at most $\tfrac1{1000}\delta t$ vertices in $R_2^r$. Finally, we conclude $|R_1|\ge(2-\tfrac1{100}\delta)t$. We also have $|R_1|\le |R_1^r|\le(2+\tfrac1{1000}\delta)t$. By a similar argument (noting that $R_1^b$ and $R_2^b$ are disjoint) we obtain
\[(2-\tfrac{1}{100}\delta)t\le |R_i|\le (2+\tfrac1{1000}\delta)t\]
for each $i=1,2$.

We make a similar argument for $B_1^r$. As above, we can conclude that all but at most $\tfrac1{1000}\delta)t+2$ vertices of $B_1^r$ are in $B_1^b\cup B_2^b$. However we can now observe that all edges from $B_1^r$ to $R_1\subset R_1^r$ are blue, while the edges from $B_2^b$ to $R_1\subset R_1^b$ are red. It follows that $B_1^r$ is disjoint from $B_2^b$, and we obtain
\[(2-\tfrac{1}{100}\delta)t\le |B_i|\le (2+\tfrac1{1000}\delta)t\]
for $i=1$, and, by the similar argument, for $i=2$.

Now $Z^r$ and $Z^b$ are two sets of size at least $(1-\tfrac1{1000}\delta)t$ in $V(G)\setminus(R_1\cup R_2\cup B_1\cup B_2)$, which has size at most $(9-\eps)t-4(2-\tfrac1{100}\delta)t\le t+\tfrac2{50}\delta t$. It follows their intersection $Z$ has size at least $(1-\tfrac{1}{10}\delta)t$, and at most $|Z^r|\le(1+\tfrac1{1000}\delta)t$. Finally, putting these size bounds together we have~\ref{MLP:i},~\ref{MLP:ii} and an upper bound on $|T|$ giving~\ref{MLP:vi}.
\end{proof}

To go with the above lemma, we state the following two embedding lemmas. The first is a corollary of~\cite[Lemma~7.1]{ABHKP}, though one could use the original Blow-up Lemma of Koml\'os, S\'ark\"ozy and Szemer\'edi~\cite{KSS} with some extra technical work in the proof of Theorem~\ref{thm:boundBW}. To deduce the following statement from~\cite[Lemma~7.1]{ABHKP}, we take $R'$ to be the graph with zero edges and $\Delta_{R'}=1$, we take $\kappa=2$, and we add to $H$ for each $i\in R$ a set of $|V_i|-|\phi^{-1}(i)|$ isolated vertices which (extending $\phi$) we map to $i$ and let be the buffer vertices $\tilde{X}_i$. 

\begin{theorem}\label{thm:blowup}
Given $d,\gamma>0$ and $\Delta\in\mathbb{N}$, there exists $\eps>0$ such that for any given $T$, the following holds for all $m\ge m_0$. Let $R$ be any graph on $[t]$, where $t\le T$. Let $V_1,\dots, V_t$ be disjoint vertex sets with $m\le |V_i|\le 2|V_j|$ for each $i,j\in[t]$, and suppose that $G$ is a graph on $V_1\cup\dots\cup V_t$ such that $(V_i,V_j)$ is an $\eps$-regular pair of density at least $d$ for each $ij\in E(R)$. Suppose that $H$ is any graph with $\Delta(H)\le\Delta$ such that there exists a graph homomorphism $\phi:H\to R$ satisfying $\big|\phi^{-1}(i)\big|\le(1-\gamma)|V_i|$. Then $H$ is a subgraph of $G$.
\end{theorem}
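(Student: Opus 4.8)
The plan is to obtain Theorem~\ref{thm:blowup} as a direct specialisation of the general blow-up lemma \cite[Lemma~7.1]{ABHKP}, a blow-up lemma for partitioned graphs that in addition accommodates image restrictions (encoded by a companion graph $R'$), a set of buffer vertices in each part, and a balancedness parameter $\kappa$ controlling the ratios of part sizes. In our situation all of these extra features will be switched off or made trivial, so the whole proof amounts to casting the hypotheses into the required format and checking that the side conditions hold vacuously. Concretely, I would fix $d,\gamma,\Delta$ and apply \cite[Lemma~7.1]{ABHKP} with reduced graph $R$, companion restriction graph $R'$ the edgeless graph on $[t]$ (with the trivial bound $\Delta_{R'}=1$), balancedness $\kappa=2$ (matching the hypothesis $m\le|V_i|\le 2|V_j|$), and buffer-fraction and degree parameters chosen compatibly with $\gamma$ and $\Delta$. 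This yields the required $\eps=\eps(d,\gamma,\Delta)>0$, independent of $T$, together with a threshold on the common lower bound for the part sizes; since only $t\le T$ ever occurs, we may set $m_0=m_0(d,\gamma,\Delta,T)$ to be the maximum of the finitely many thresholds so produced.

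The one preparatory step is to augment the graph to be embedded so that it exactly spans the blow-up of $R$. Given $H$ and a homomorphism $\phi\colon H\to R$ with $|\phi^{-1}(i)|\le(1-\gamma)|V_i|$, form $H^{+}$ by adding to $H$, for each $i\in[t]$, a set $\tilde X_i$ of $|V_i|-|\phi^{-1}(i)|\ge\gamma|V_i|$ isolated vertices, and extend $\phi$ by mapping $\tilde X_i$ to $i$. Then $\Delta(H^{+})=\Delta(H)\le\Delta$, the extended map $\phi\colon H^{+}\to R$ is a homomorphism with $|\phi^{-1}(i)|=|V_i|$ for every $i$, and $|V(H^{+})|=\sum_i|V_i|=|V(G)|$. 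I would declare the $\tilde X_i$ to be the buffer sets required by \cite[Lemma~7.1]{ABHKP}: since these vertices are isolated, any structural requirement on the buffer (independence, neighbours avoiding other buffer vertices, a small-bandwidth or low-degeneracy condition on the buffer links) holds trivially. The image-restriction hypotheses are vacuous because $R'$ has no edges; the sets $\phi^{-1}(i)$ are independent in $H^{+}$ because $R$ is loopless; and the $\eps$-regularity and density at least $d$ conditions on the pairs $(V_i,V_j)$ with $ij\in E(R)$ are exactly those assumed of $G$. Hence \cite[Lemma~7.1]{ABHKP} delivers an embedding of $H^{+}$ into $G$ sending each $\phi^{-1}(i)$ into $V_i$, and deleting the auxiliary vertices $\bigcup_i\tilde X_i$ leaves the desired copy of $H$ inside $G$.

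I expect the only genuinely delicate point to be bookkeeping at the interface with \cite[Lemma~7.1]{ABHKP}: one has to verify that the precise form of its buffer-set axioms and its balancedness convention really are implied by ``the buffer vertices are isolated'' and ``$m\le|V_i|\le 2|V_j|$'', and that the quantifier order there (regularity parameter uniform over reduced graphs, with a part-size threshold allowed to grow with the number of parts) matches the statement of Theorem~\ref{thm:blowup}. Everything else is a routine translation; alternatively, as the surrounding text notes, one could instead run the classical Koml\'os--S\'ark\"ozy--Szemer\'edi blow-up lemma \cite{KSS} and carry out the buffer and image-restriction bookkeeping by hand at the place where Theorem~\ref{thm:blowup} is applied.
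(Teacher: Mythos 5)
Your proposal matches the paper's deduction exactly: the paper also obtains Theorem~\ref{thm:blowup} from \cite[Lemma~7.1]{ABHKP} by taking $R'$ edgeless with $\Delta_{R'}=1$, setting $\kappa=2$, and padding $H$ with $|V_i|-|\phi^{-1}(i)|$ isolated buffer vertices mapped to $i$. Your additional bookkeeping about quantifier order and the vacuity of the buffer/image-restriction axioms is consistent with the intended argument.
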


The next is a consequence of the (original) Blow-up Lemma derived in~\cite{AllenBoettcherHladky2011}.

\begin{lemma}[Embedding Lemma, Allen, B\"{o}ttcher, Hladk\'{y}~\cite{AllenBoettcherHladky2011}]\label{EmbeddingLemma}
For all $d>0$ there exists $\epsilon_{EL}>0$ with the following property. Given $0<\epsilon<\epsilon_{EL}$, for every $m_{EL}\in\mathbb{N}$ there exists $n_{EL}\in \mathbb{N}$ such that the following holds for any graph $G$ on $n>n_{EL}$ vertices with $(\epsilon, d)$-reduced graph $R$ on $m\leq m_{EL}$ vertices. 
Let $\xi(R)$ be the size of the largest TCTF in $R$, then for every $\ell\in \mathbb{N}$ with $3\ell\leq (1-d)\xi(R)\tfrac{n}{m}$ we have $C_{3\ell}^2\subset G$.
\end{lemma}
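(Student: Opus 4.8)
The plan is to prove this as a fairly direct corollary of the original Blow-up Lemma of Komlós, Sárközy and Szemerédi (or its robust version from~\cite{AllenBoettcherHladky2011}), using the $(\epsilon,d)$-reduced graph $R$ to locate a suitably large monochromatic-in-the-right-sense structure into which the square of a cycle can be placed. First I would recall that $C_{3\ell}^2$ has chromatic number $3$ (when $3\mid 3\ell$, which is the case here), bandwidth $2$, and maximum degree $4$; moreover it decomposes into $\ell$ consecutive ``blocks'' of three vertices, each block a triangle, with consecutive blocks completely joined. This is exactly the structure of an $\ell$-vertex path in which every vertex is blown up into a triangle and consecutive triangles are fully joined — i.e. $C_{3\ell}^2$ embeds into the ``triangle blow-up'' of a cycle $C_\ell$, and more flexibly into the triangle blow-up of any connected graph on $\ell$ vertices containing a spanning closed walk, in particular into the triangle blow-up of a path on $\ell$ vertices (which suffices since we only need a path structure to lay out the cyclic square, closing it up at the ends using that the two ends of the path-of-triangles can be joined through one extra cluster).

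Next I would extract from $R$ a TCTF $F$ on $\xi(R)$ vertices, say on triangles $t_1,\dots,t_s$ with $3s=\xi(R)$, all lying in one triangle component of $R$. Triangle-connectedness means precisely that the auxiliary graph whose vertices are these $s$ triangles, with an edge between two triangles sharing... — more to the point, triangle-connectedness of the edge set lets me find, for any two of the triangles $t_i,t_j$, a sequence of triangles in $R$ stepping from one to the other where consecutive triangles share an edge. I would use this to build, inside the blown-up graph $G$, a long ``path of triangles'': repeatedly pass from triangle $t_i$ to an adjacent triangle sharing an edge, so that the corresponding clusters overlap in two coordinates and we can route the square-path through. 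Concretely, I would apply the Embedding Lemma / Blow-up Lemma to the subgraph of $G$ induced on the clusters $\{V_i : i \in t_1\cup\dots\cup t_s\}$, where each $\epsilon$-regular pair corresponding to an edge of a triangle of $F$ has density at least $1-d$; the $(1-d)$ factor in the statement is exactly the ``loss'' we must leave as a buffer / reservoir when we want to embed $3\ell$ vertices into $\xi(R)\cdot\tfrac{n}{m}$ cluster-vertices, namely $3\ell \le (1-d)\xi(R)\tfrac{n}{m}$ guarantees each cluster is used on at most a $(1-d)$-fraction, leaving room for the standard Blow-up Lemma argument (or, in the robust version, to handle the isolated vertices / buffer as in~\cite{AllenBoettcherHladky2011}).

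The main obstacle — and the step I would spend the most care on — is bookkeeping the triangle-connection: turning the abstract fact that all triangles of $F$ lie in one triangle component of $R$ into an \emph{explicit} spanning structure (a walk through the triangles using shared edges) that the square of the cycle can follow, and making sure the total number of cluster-vertices one actually ``visits'' along this walk is still at least $\xi(R)\tfrac nm$ up to the $(1-d)$ loss, despite possibly re-using some triangles as connectors. The cleanest way is: since each triangle of $F$ has three clusters of size $n/m$ and consecutive triangles in a connection chain share two clusters, a chain through all $s$ triangles of $F$ still yields a ``thick path'' whose clusters collectively host more than $(1-d)\cdot 3s\cdot \tfrac nm$ vertices for any fixed $d$ once $\epsilon$ is small and $m$ large; then one invokes the Blow-up Lemma (in the form from~\cite{AllenBoettcherHladky2011}, whose hypotheses on bounded degree, bounded bandwidth-type structure, and $\epsilon$-regularity with density $\ge 1-d$ are met) to embed $C_{3\ell}^2$ along this thick path, closing the cycle at the two ends through a shared cluster. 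The remaining verifications — that $C_{3\ell}^2$ really does fit into a thick path of clusters of these sizes, that the end-closing is possible, and that $\epsilon_{EL}$, $m_{EL}$, $n_{EL}$ can be chosen uniformly — are routine given the quoted Blow-up Lemma, so I would state them and refer to~\cite{AllenBoettcherHladky2011} rather than reprove them.
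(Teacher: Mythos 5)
The paper gives no proof of this lemma at all: it is imported verbatim as ``a consequence of the (original) Blow-up Lemma derived in~\cite{AllenBoettcherHladky2011}'', so there is no in-paper argument to compare yours against. That said, your sketch is a sound outline of the standard derivation, and it closely parallels the argument the paper \emph{does} write out in its proof of Theorem~\ref{thm:boundBW}: there the authors build a homomorphism from the target graph onto the triangles of a TCTF, routing between consecutive triangles along minimum-length triangle walks (each of fewer than $M^2$ triangles, so the connecting material is $o(n)$), keeping track of a consistent labelling of the three clusters of each triangle, verifying that no cluster is overloaded beyond a $(1-\gamma)$-fraction, and then invoking Theorem~\ref{thm:blowup}. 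Your identification of the $(1-d)$ factor as exactly the slack needed so that each cluster receives at most a $(1-d)$-fraction of its vertices is the right reading.

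Two points where your sketch is looser than it should be. First, $C_{3\ell}^2$ does \emph{not} embed into the triangle blow-up of a path $P_\ell$; you need a \emph{closed} triangle walk through all the triangles of the TCTF (which triangle-connectedness supplies), and when you close the cycle you must check that the cyclic order in which the three clusters of the first triangle are revisited matches the order in which you left them --- this is a genuine phase/labelling condition, handled in the paper's Theorem~\ref{thm:boundBW} proof by the explicit label-propagation rule, and it is the one verification you should not wave away as routine. Second, minor slips: the bandwidth of $C_{3\ell}^2$ is $4$, not $2$ (only $P_{3\ell}^2$ has bandwidth $2$); this does not affect the argument since what the Blow-up Lemma needs is the bounded maximum degree and the homomorphism, but it matters if you intend to cite a bandwidth-theorem-style statement. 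With the closed-walk and labelling point made precise, your proposal is a correct derivation of the lemma from the Blow-up Lemma.
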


We are now in a position to prove Lemma~\ref{regularityresult}, which we restate for convenience.

\RegResult*

\begin{proof}
Given $\alpha>0$, let $d$ be such that $10000\alpha^{-2}d$ is returned by Lemma~\ref{MainLemmaWithPurple} when we use as input $\tfrac{\alpha^2}{10000}$. Let $\epsilon_{EL}$ be returned by Lemma~\ref{EmbeddingLemma} for input $d$, and let $\eps=\min(\tfrac1{10}d,\eps_{EL},\tfrac1{10000}\alpha^2)$.
Let now $N_0$ and $M$ be returned by Lemma \ref{RegularityLemma} with input $\eps$. We let $n_{EL}$ be returned by Lemma \ref{EmbeddingLemma} for input $d$, $\eps$ and $m_{EL}=M$. Finally, let $\delta=d$ and $n_0=\max(100\eps^{-1}, N_0,N_1)$ be the constants returned by the lemma..

Let us now fix some $n>n_0$ and, for $N>(9-\delta)n$, a $2$-edge-colouring $G$ of $K_N$.

We apply Lemma \ref{RegularityLemma} with parameter as above to the red subgraph of $G$ to get a partition $V_0,\dots{}, V_m$ of $V(G)$, with $\eps^{-1}\le m\le M$, as in Lemma~\ref{RegularityLemma}. 
Let $H$ be the $(\eps,d)$-reduced graph of $G$. Since any cluster $V_i$ is in at most $\eps m$ irregular pairs, we have  $\delta(H)\geq (1-\eps)m-1$. Let $t=\tfrac{m}{9-10\alpha^{-1}d}$, so that $H$ has $(9-10\alpha^{-1}d)t$ vertices and, by choice of $\eps$, minimum degree at least $(9-20\alpha^{-1}d)t$. By Lemma~\ref{MainLemmaWithPurple} with constants as above, one of the following occurs.

It could be that $H$ contains a red-purple TCTF over $3(1+10d)t=\tfrac{3(1+10d)}{9-10\alpha^{-1}d}m\ge\tfrac13(1+10d)m$ vertices. Applying Lemma~\ref{EmbeddingLemma} with constants as above, we conclude that $G$ contains a red $C^2_{3s}$ for each $s\le (1-d)\cdot \tfrac13(1+10d)\cdot(9-d)n\ge 3(1+d)n$. But then in particular $G$ contains a red copy of $C_{3n}^2$ and $P_{3n+2}^2$ and we are done. Similarly, if $H$ contains a blue-purple TCTF over $3(1+10d)t$ vertices then $G$ contains a blue copy of $C_{3n}^2$ and $P_{3n+2}^2$ and we are done.

Alternatively, by Lemma \ref{MainLemmaWithPurple} we get a partition of $V(H)$ in sets $B_1$, $B_2$, $R_1$, $R_2$, $Z''$ and $T$. We obtain from this a partition of $V(G)$, setting $X'_j=\bigcup_{i\in B_j}V_i$ and $Y'_j=\bigcup_{i\in R_j}V_i$ for $j=1,2$, setting $Z':=\bigcup_{i\in Z''}V_i$, and letting $R'$ be the remaining vertices. Since we applied Lemma~\ref{MainLemmaWithPurple} with input $\tfrac{\alpha^2}{10000}$ and by choice of $\eps$, we have properties~\ref{conditionA} and~\ref{conditionB} of Lemma~\ref{approxmainlemma} with $\tfrac{1}{1000}\alpha^2$ instead of $\alpha$.

Since $(B_1,B_2)$ is $\tfrac1{10000}\alpha^2 t$-red, the number of blue edges in $G$ between $X'_1$ and $X'_2$ is at most
\[d|X'_1||X'_2|+\tfrac1{1000}\alpha^2 n|X'_2|+\tfrac{1}{1000}\alpha^2 n|X'_1|\le\tfrac{1}{200}\alpha^2 n^2 \,,\]
where the inequality uses $d\le\tfrac{1}{10000}\alpha^2$. In particular there are less than $\tfrac1{200}\alpha n$ vertices in $X'_1$ which have more than $\alpha n$ blue neighbours in $X'_2$, and similarly swapping $X'_1$ and $X'_2$. By the identical calculation, an analogous statement holds for $Y'_1$ and $Y'_2$ in red.

We now claim that at most $\eps|X'_1|$ vertices in $X'_1$ send $\alpha n$ or more red edges to $Y'_1$. Suppose for a contradiction this statement is false. By averaging, there is a cluster $V_i$ with $i\in B_1$ such that a set $S$ of $\eps|V_i|$ vertices of $V_i$ all send $\alpha n$ or more red edges to $Y'_1$. Since $V_i$ is in irregular pairs with at most $\eps m$ other clusters, at most $2\eps n$ red edges from each $s\in S$ go to clusters of $R_1$ that make irregular pairs with $V_i$. The remaining at least $\tfrac12\alpha n|S|$ edges from $S$ therefore go to the remaining less than $3m$ clusters $V_j$ with $j\in R_1$, which all form $\eps$-regular pairs with $V_i$ that have density at most $d$ in red. Again by averaging, there is a cluster $V_j$ with $j\in R_1$ such that $(V_i,V_j)$ is $\eps$-regular and has red density at most $d$, but also receives at least $\tfrac{\alpha n|S|}{6m}>2d|V_j||S|$ red edges from $S$. But this, since $\eps<d$ and $|S|\ge\eps|V_i|$, is a contradiction to regularity of $(V_i,V_j)$.

By a similar argument, at most $\eps|X'_i|$ vertices in $X'_i$ send edges of the `wrong' colour to each $Y'_j$ or to $Z'$ or vice versa. We can modify the argument slightly to show that at most $\eps|X'_1|$ vertices of $X'_1$ have more than $\alpha n$ red neighbours in $X'_1$: again we can find a set $S$ in a cluster $V_i$ with $i\in B'_1$ whose vertices all have more than $\alpha n$ red neighbours in $X'_1$, but we need to discard both red edges in irregular pairs at $V_i$ and also edges within $V_i$. Since $|V_i|\le\tfrac{m}{n}\le\eps n$, there are in total at most $2\eps n$ such edges, which is the same bound we used above and from this point the proof above works as written.

We now let $X_1$ be obtained from $X'_1$ by discarding all vertices which have more than $\alpha n$ edges of the `wrong' colour to any of $X'_i$ or $Y'_i$ or $Z'$. By the above calculations, in total we discard at most $4\eps |X'_1|+\tfrac1{200}\alpha n\le\tfrac1{100}\alpha n$ vertices of $X'_1$. We define similarly $X_2,Y_1,Y_2,Z$, and similarly remove at most $\tfrac1{100}\alpha n$ vertices in each case. Finally, we let $R$ denote the set of all vertices not in $X_1\cup X_2\cup Y_1\cup Y_2\cup Z$. By construction, each $X_i$, $Y_i$ and $Z$ has the claimed size; and $|R|\le\alpha n$ follows since each vertex of $R$ was either in $V_0$, or $V_i$ for some $i\in T$, or removed from $X'_i$ or $Y'_i$ or $Z'$. There are at most $\eps n+\tfrac{\alpha^2}{10000}n+5\cdot\tfrac1{100}\alpha n$ such.

Finally, by definition each vertex of $X_1$ has at most $\alpha n$ edges of the `wrong' colour to any of $X'_i,Y'_i$ or $Z'$, which are supersets of $X_i,Y_i,Z$ respectively, giving the required bounds on `wrong' coloured edges at $X_1$. The same holds for $X_2,Y_1,Y_2,Z$ by the similar argument.
\end{proof}

Finally, we prove Theorem~\ref{thm:boundBW}. First, we deduce from Lemma~\ref{MainLemma} that if $G$ satisfies the conditions of Theorem~\ref{thm:boundBW}, then the reduced graph $R$ of $G$ is an $m$-vertex graph which contains a monochromatic TCTF on nearly $\tfrac13m$ vertices. Suppose this is red. We then show how to construct a homomorphism from any given $H$ satisfying the conditions of Theorem~\ref{thm:boundBW} to the red subgraph of $R$ which does not overload any vertex $i$ of $R$, i.e.\ map too many vertices to $i$, and finally apply Theorem~\ref{thm:blowup} to find the desired monochromatic copy of $H$ in $G$.

The only tricky step of this sketch is to construct the required homomorphism. We split up $V(H)$ into \emph{chunks} and \emph{fragments}, which are intervals in the bandwidth ordering, alternating between chunks and fragments. Each fragment is of equal length and their total size is tiny compared to the size of a cluster, and the chunks are of equal length and much bigger than the fragments (but still much smaller than the size of a cluster). Given our TCTF in $R$, we put an order $T_1,\dots,T_k$ (arbitrarily) on the triangles of the TCTF, and fix for each $1\le i\le k-1$ a tight walk from $T_i$ to $T_{i+1}$. We assign each chunk of $H$ to some $T_i$ where $i$ is chosen uniformly and independently from $[k]$. We claim that it is possible to now construct a homomorphism where each chunk will be mapped entirely to its assigned triangle, using the fragments to connect up along the fixed tight walks, and that this homomorphism will with positive probability not overload any vertex of $R$: the point here will be to analyse the assignment of chunks, since the total size of all fragments is tiny.

\begin{proof}[Proof of Theorem~\ref{thm:boundBW}]
Given $\gamma>0$ and $\Delta$, we fix $h\le\tfrac{\gamma}{1000}$ and $\lambda>0$ (which will play no further r\^ole in this proof) sufficiently small for Lemma~\ref{MainLemma}, and let $2\eps'$ and $t_0$ be the returned constants. We let $\eps>0$ be returned by Theorem~\ref{thm:blowup} for input $d=\tfrac12$, $\tfrac{\gamma}{100}$ and $\Delta$. Without loss of generality, we may presume $\eps<\tfrac1{10}\min(t_0^{-1},\eps',\gamma)$. We input $\eps$ and $d=\tfrac12$ to Theorem~\ref{RegularityLemma} and let $M,N_0$ be the returned constants. We input $T=M$ to Theorem~\ref{thm:blowup}, with the other parameters as above, and choose $N_1$ such that the returned constant $m_0\le N_1/M$.

We set $\rho=\tfrac{1}{60000}M^{-3}\gamma^2$ and $\beta=\tfrac{1}{100}M^{-4}\gamma$.

Suppose now $n\ge\max(N_0,N_1)$. 

Let $N=(9+\gamma)n$. Given a $2$-edge-coloured $K_N$, we apply Lemma~\ref{RegularityLemma} with constants as above, to the graph of red edges in $K_N$, to obtain a partition $V(K_N)=V_0\cup V_1\cup\dots\cup V_m$, where $\eps^{-1}\le m\le M$. By construction, the number of vertices in each part $V_i$ with $1\le i\le m$ is at least $\tfrac{(9+\gamma/2)n}{m}$.

Let $R$ be the corresponding coloured reduced graph on $[m]$, in which we colour a pair $ij$ red if $(V_i,V_j)$ is $\eps$-regular and has density in red at least $\tfrac12$, blue if it is $\eps$-regular and has density in blue strictly bigger than $\tfrac12$, and otherwise (i.e.\ if the pair is irregular) we do not put an edge $ij$. By construction, we have $\delta(R)\ge (1-\eps)m$.

Let $t=m/(9-\eps')$, so that $R$ has $(9-\eps')t$ vertices and minimum degree at least $(9-2\eps')t$. By Lemma~\ref{MainLemma}, either $R$ contains a monochromatic $3(1+\eps')t$-vertex TCTF, or we obtain a partition of $V(R)$ as described in that lemma. In particular, there is a set $B_1$ of at least $(2-h)t$ vertices and a disjoint set $R_1$ of at least $(2-h)t$ vertices, such that any triangle with two vertices in $B_1$ and one in $R_1$ is monochromatic blue (and so all such triangles are in a blue triangle component). It follows that choosing $(1-h)t$ disjoint such triangles greedily we obtain a monochromatic TCTF with $3(1-h)t$ vertices. We see that in all cases $R$ contains a monochromatic TCTF on at least $3(1-h)t\ge\tfrac13(1-h)m=:3k$ vertices.

Fix such a TCTF, let its triangles be $T_1,\dots,T_k$ and suppose without loss of generality that it is red. By definition of red triangle-connectedness, for each $1\le i\le k-1$ there is a red triangle walk in $R$ from $T_i$ to $T_{i+1}$, and we fix for each $i$ one such $W_i$ chosen to be of minimum length. Thus $W_i$ is a sequence of triangles, starting with $T_i$ and ending with $T_{i+1}$, in which each pair of consecutive triangles shares two vertices. Finally, we assign labels $1,2,3$ to the vertices of all these triangles as follows: we label the vertices of $T_1$ in an arbitrary order, then assign labels to the successive triangles of $W_1,W_2,\dots,W_{k-1}$ in order as follows: when we assign labels to the next triangle, we keep the labels of the two vertices it shares with the previous triangle, and give the missing label to the third vertex. Note that a given vertex, or a given edge, might receive different labellings for different triangles, and indeed if a triangle appears in two different walks it might receive different labellings in the different walks.

Let $H$ be a graph with maximum degree at most $\Delta$, bandwidth at most $\beta n$, and a fixed $3$-vertex colouring in which no colour class has more than $n$ vertices. We split $V(H)$ into consecutive intervals $C_1,F_1,C_2,F_2,\dots,F_{s-1},C_s$ as follows: we let each $C_i$ (except perhaps the last two, which can be of any size) consist of $\rho n$ vertices, and each $F_i$ be of size $M^2\beta n$. For each $1\le i\le s$ we pick $\pi(i)\in[k]$ uniformly and independently at random. We now define a homomorphism $\psi:H\to R$ as follows. If $x$ is a vertex of the chunk $C_i$ for some $i$, and its colour in the fixed $3$-colouring of $H$ is $j\in[3]$, then we set $\psi(x)$ equal to the vertex of $T_{\pi(i)}$ with label $j$. We now describe how to construct $\psi$ on the fragment $F_1$; the same procedure is used for each subsequent fragment with the obvious updates. We separate $F_1$ into intervals of length $\beta n$. If $x$ is in the $i$th interval, and has colour $j$ in the $3$-colouring, then we set $\psi(x)$ equal to the vertex of the $i$th triangle after $T_1$ in $W_1$ with label $j$. If there is no such triangle (i.e.\ the walk has already reached $T_2$) then we set $\psi(x)$ equal to the vertex labelled $j$ in $T_2$. We claim that this last event occurs for the final interval. Indeed, if two triangles of $W_1$ both contain a given edge $e$ of $R$, then by minimality they are consecutive triangles in the walk, so $W_1$ has less than $M^2$ triangles.

We claim that this construction gives a homomorphism from $H$ to the red subgraph of $R$. Indeed, suppose $xy$ is an edge of $H$. Then $x$ and $y$ have different colours in the $3$-colouring, and they are separated by at most $\beta n$ in the bandwidth ordering. By construction, $x$ is assigned to a vertex of some triangle $T$ according to its colour. The vertex $y$ is assigned to a triangle $T'$ according to its colour; and either $T=T'$ or $T$ and $T'$ are consecutive triangles on one of the fixed walks, in particular they share two vertices and their labels are consistent on those two vertices. Either way, $x$ and $y$ are mapped to a red edge of $R$ (the only non-edge is if $T\neq T'$ and it goes between the two vertices of the symmetric difference of $T$ and $T'$, which both have the same label: but $x$ and $y$ have different colours).

We still need to justify that with high probability $\psi$ does not overload any vertex of $R$. To begin with, observe that the total number of vertices in the fragments is at most $M\cdot M^2\beta n=M^3\beta n\le\tfrac{\gamma n}{100m}$, which is much smaller than the size of any cluster $V_i$. In particular, if $i$ is not in any triangle of the TCTF, then $|\psi^{-1}(i)|<\tfrac12|V_i|$ as desired. Now consider the vertex $u$ of $T_i$ with label $j$. Apart from the at most $M^3\beta n$ vertices of the fragments, the vertices of $\psi^{-1}(u)$ are vertices of chunks with colour $j$. There are at most $n$ vertices in chunks of colour $j$ in total, and each such chunk has probability $1/k$ of being assigned to $T_i$. We see that the expected number of chunk vertices in $\psi^{-1}(u)$ is at most $n/k$. The probability that the actual number of such vertices exceeds $n/k$ by $s$ is by Hoeffding's inequality at most
\[\exp\big(-\tfrac{s^2}{2\cdot 3\rho^{-1}\cdot (\rho n)^2}\big)=\exp\big(-\tfrac{s^2}{6\rho n^2}\big)\,,\]
where we used that there are at most $3\rho^{-1}$ chunks, and the maximum contribution of a given chunk to $|\psi^{-1}(u)|$ is at most $\rho n$. Choosing $s=\tfrac{1}{100}\gamma M^{-1}n$, by choice of $\rho$ the probability that $|\psi^{-1}(u)|\ge n/k+s+M^3\beta n$ (the last term accounts for vertices in fragments) is at most $\exp(-M)$. In particular, with positive probability we have
\[|\psi^{-1}(u)|\le\tfrac{n}{k}+\tfrac{1}{100}\gamma\tfrac{n}{M}+M^3\beta n\]
for every $u\in V(R)$. Suppose this event occurs. Substituting our values for $\beta$, $k$ and finally $h$, we get
\[|\psi^{-1}(u)|\le\tfrac{9n}{(1-h)m}+\tfrac{1}{100}\gamma\tfrac{n}{m}+\tfrac{1}{100}\gamma\tfrac{n}{m}\le\tfrac{9n}{m}(1+2h)+\tfrac{1}{10}\gamma\tfrac{n}{m}\le\frac{(9+\tfrac{\gamma}{5})n}{m}\,.\]
Since $|V_u|\ge\tfrac{(9+\gamma/2)n}{m}$, as observed at the start of this proof, we have $|\psi^{-1}(u)|\le(1-\tfrac{\gamma}{100})|V_u|$ for every $u\in V(R)$. This is the required condition to apply Theorem~\ref{thm:blowup}.

Finally, by Theorem~\ref{thm:blowup} we conclude that there is a red copy of $H$ in the $2$-coloured $K_N$.
\end{proof}

%%%%%%%%%%%%%%%%%%%%%%%%%%%%%%%%%%%%%%%%%%%%%%%%%%%%%%%%%%%%%%%%%%%%%%%%%%%

\section{Proof of Theorem~\ref{thm:main}}\label{sec:exact}
We are now ready to prove the main result of this paper, which we restate for convenience. Recall that we established the lower bound in Section~\ref{sec:note}, and what remains is to prove the corresponding upper bound. We give the full details for the square of a path, the square of a cycle case is similar.

\thmmain*

\begin{proof}[Proof of Theorem~\ref{thm:main}, upper bound for $P_{3n+1}^2$]
Let $n_0$ and $\delta$ be given by Lemma \ref{regularityresult} when we set $\alpha=\frac{1}{1000}$ (we are not trying to optimise this value) and then let us fix $n>\max(n_0, \frac{3}{\delta})$ and $N\geq 9n-3$. Let now $G$ be any 2-edge-colouring of $K_{N}$. We suppose for a contradiction that $G$ does not contain a monochromatic $P^2_{3n+1}$.
By Lemma \ref{regularityresult}, since $G$ does not contain a monochromatic $P_{3n+1}^2$, we have a partition $X_1, X_2, Y_1, Y_2, Z, R$ of $V(G)$ with the conditions \ref{conditionA}-\ref{conditionG}, which we fix.

We now want to refine these conditions by adapting repeatedly a greedy procedure. Since we are going to apply multiple times the same method, we will explain the greedy procedure and the arguments for existence only in the first instance.

\begin{claim}\label{claim91}
$X_1$ and $X_2$ are entirely blue, while $Y_1$ and $Y_2$ are entirely red.
\end{claim}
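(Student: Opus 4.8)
The plan is to argue by contradiction: assuming $G$ has no monochromatic $P^2_{3n+1}$, we show that a red edge inside $X_1$ (or $X_2$) forces a red $P^2_{3n+1}$, and symmetrically a blue edge inside $Y_1$ (or $Y_2$) forces a blue $P^2_{3n+1}$. We do $X_1,X_2$ first; once we know $G[X_1],G[X_2]$ are blue cliques, the argument for $Y_1,Y_2$ is the colour-swapped version using $X_1$ (now a genuine large blue clique) in place of the red cliques. The geometric idea for $X_1$: the triple $\{X_1,X_2,Z\}$ is pairwise red-dense (conditions \ref{conditionA}--\ref{conditionG}: $(X_1,X_2),(X_1,Z),(X_2,Z)$ are essentially red), so one can build a red square path of length about $3|Z|\approx 3n$ cycling through these three sets — just short of $3n+1$. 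A red edge $uv$ inside $X_1$ supplies exactly the missing bridge: since $(X_1,Y_2)$ is red, $u$ and $v$ have a common red neighbour in the red clique $Y_2$, so after the oscillation we can cross over and pick up a handful of extra vertices of $Y_2$ to pass $3n+1$.

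Concretely I would first do a cleanup: let $X_1^{\circ}\subseteq X_1$ be the vertices with at least $|X_2|-\alpha n$ red neighbours in $X_2$, at least $|Z|-\alpha n$ red neighbours in $Z$, and at least $|Y_2|-\alpha n$ red neighbours in $Y_2$; by \ref{conditionA}--\ref{conditionG} this discards at most $3\alpha n$ vertices, which I move into $R$ (negligibly enlarging it). Define $X_2^{\circ}$, $Z^{\circ}$ analogously. If $G[X_1^{\circ}]$ is blue we are done, so suppose it contains a red edge $uv$; then $u,v$ are both "good" towards $X_2,Z,Y_2$. Now build the red square path greedily: construct $v_1,\dots,v_m$ whose vertices cycle through the classes $X_1^{\circ},Z^{\circ},X_2^{\circ}$, using all of $Z^{\circ}$ and ending at $v_m=u$ (avoiding $v$). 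At each step the new vertex must only be red-adjacent to the previous two — a red triangle with one vertex in each of the three large sets near the generic part, the obvious variants near the ends — and we always pick from the $\ge(2-O(\alpha))n$ good, unused vertices of the appropriate class; a little care in the last three steps lets us arrange $v_{m-2},v_{m-1}$ to be red-neighbours of $u$ and $v_{m-1}$ also a red-neighbour of $v$, so that $v_1,\dots,v_m,v$ is still a red square path, with $m\ge 3|Z^{\circ}|-O(1)\ge(3-O(\alpha))n$. Finally append $y_1,y_2,\dots,y_s\in Y_2$ with $y_1\in N_{\mathrm{red}}(u)\cap N_{\mathrm{red}}(v)$, $y_2\in N_{\mathrm{red}}(v)$ (nonempty as $u,v$ are good to $Y_2$), and $y_3,\dots,y_s$ arbitrary in $Y_2$: since $Y_2$ is a red clique this is a red square path, and taking $s=3n+1-(m+1)=O(\alpha)n<|Y_2|$ yields a red $P^2_{3n+1}$, the desired contradiction. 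Hence $G[X_1]$ (after cleanup) is blue; repeating with $X_1\leftrightarrow X_2$, $Y_1\leftrightarrow Y_2$ gives $G[X_2]$ blue, and the colour-dual argument (blue-dense triple $\{Y_1,Y_2,Z\}$ from condition \ref{conditionG} side, bridging a blue edge of $Y_1$ into the blue clique $X_1$) gives $G[Y_1],G[Y_2]$ red.

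The main obstacle is not conceptual but the greedy bookkeeping: checking that none of the three large classes is exhausted (each is used at most $\approx|Z|\le(1+\alpha)n$ times against a stock of $\ge(2-O(\alpha))n$ good vertices), that the oscillation can be steered to terminate at the prescribed vertex $u$ while keeping $v$ in reserve, and that the $O(\alpha n)$ vertices relocated to $R$ during cleanup leave conditions \ref{conditionA}--\ref{conditionG} intact for the rest of Section~\ref{sec:exact}. As the paper indicates, this is the first instance of the greedy scheme — oscillate through a monochromatically dense triple, then cross a "wrong"-coloured edge into a monochromatic clique — that is reused in the later refinements, which is why the details are given here in full.
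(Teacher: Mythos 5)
Your proposal is correct and follows essentially the same route as the paper: take the red edge $uv$ in $X_1$, surround it with common red neighbours in $Y_2$ and in $Z$, and greedily grow a red square path through the red-dense triple $(X_1,X_2,Z)$ on one side and into $Y_2$ on the other (the paper balances the two extensions as $2n$ and $\tfrac32 n$ vertices rather than your $\approx 3n$ and $O(\alpha n)$, and grows the path outward from a middle segment containing $uv$ rather than steering the oscillation to terminate at $u$, but these are cosmetic differences). One slip to correct: you cannot take $y_3,\dots,y_s$ ``arbitrary in $Y_2$'' on the grounds that ``$Y_2$ is a red clique'' --- that is precisely part of what the claim is proving; at this stage condition~\ref{conditionG} of Lemma~\ref{regularityresult} only guarantees that each vertex of $Y_2$ has at most $\alpha n$ blue neighbours inside $Y_2$, so these vertices must also be chosen greedily as common red neighbours of the two preceding vertices, which succeeds since $s=O(\alpha n)\ll |Y_2|$. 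Also, your initial cleanup is vacuous: the degree conditions of Lemma~\ref{regularityresult} are stated for every vertex of each part, so $X_1^{\circ}=X_1$ and nothing is moved to $R$.
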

\begin{claimproof}
Assume by contradiction that there is a red edge $x_1x_1'$ in $X_1$. Since $x_1$ and $x_1'$ have at most $\alpha n$ blue neighbours in $Z$ and $(1-\alpha)n\leq \vass{X}$, we have that $x_1$ and $x_1'$ have a common red neighbour $z\in Z$. Similarly, by considering the common red neighbour of $x_1$ and $x_1'$ in $Y_2$, we can find $y_2, y_2' \in Y_2$ such that $y_2y_2', x_1y_2, x_1'y_2', x_1'y_2, x_1y_2'$ are all red.

We are now ready to extend the red path $P_0=y_2, y_2', x_1, x_1', z$ (whose square is also monochromatic red) to a path $P$ of length bigger than $3n+2$ such that $P^2$ is also monochromatic red. The idea is to greedily add to $P_0$ at least $\frac{3}{2}n$ vertices from $Y_2$ (using the fact that almost all the edges in $Y_2$ are red) and $2n$ vertices from $(X_1, X_2, Z)$.

In order to do that, it suffices to show that we can find a path $P_{Y_2}$ of length at least $\frac{3}{2}n$ in $Y_2$ that starts with $y_2y_2'$ and such that $P_{Y_2}^2$ is monochromatic red. Assume we have built already a path $P_{Y_2}=y_2, y_2',\dots{}, p_{\ell}$ with the aforementioned conditions, provided $\ell<\frac{3}{2}n$, we can extend $P_{Y_2}$ simply by appending a vertex $p_{\ell+1}$ that is in the common red neighbour of $p_{\ell}$ and $p_{\ell-1}$ in $Y_2\setminus P_{Y_2}$. But this is possible, indeed all but at most $\frac{2}{1000}n$ vertices in $Y_2$ have red edges to both $p_{\ell}$ and $p_{\ell-1}$.

We do a very similar procedure by greedily extending $P_{(X_1, X_2, Z)}$. Given a path $P_{(X_1, X_2, Z)}=x_1', z, \dots{}, p_{\ell}$ of length smaller than $2n$, we can extend it by taking a vertex in the common red neighbour of $p_{\ell}$ and $p_{\ell-1}$ and in the right component.

Since $P_0^2$ is monochromatic red, and since we showed how to extend the endpoints to form a long path whose square is also monochromatic, we are done.

The arguments for $X_2, Y_1$ and $Y_2$ are symmetric.
\end{claimproof}

\begin{claim}\label{claim92}
The pairs $(X_1, Z)$ and $(X_2, Z)$ are entirely red, while the pairs $(Y_1, Z)$ and $(Y_2, Z)$ are entirely blue.
\end{claim}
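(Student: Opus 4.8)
The plan is to mirror the greedy path-building technique used in Claim~\ref{claim91}. Suppose for a contradiction that there is a blue edge $x_1z$ with $x_1\in X_1$ and $z\in Z$. Since by Claim~\ref{claim91} all edges inside $X_1$ are blue and $(X_1,Y_1)$ is almost entirely blue (condition on at most $\alpha n$ red neighbours), and $(Y_1,Z)$ should turn out blue too, I would first locate a short blue ``seed'' path $P_0$ whose square is monochromatic blue and which uses the edge $x_1z$: pick a vertex $x_1'\in X_1$ that is a common blue neighbour of $x_1$ and $z$ (possible since $x_1$ has at most $\alpha n$ red neighbours in $X_1$ and $z$ has at most $\alpha n$ red neighbours in $X_1\cup X_2$), and pick $y_1,y_1'\in Y_1$ forming a blue $K_4$-like configuration with $x_1,x_1'$ exactly as in Claim~\ref{claim91}. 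This gives a blue-square seed path $P_0 = y_1,y_1',x_1',x_1,z$ or a suitable reordering so that $z$ is an endpoint.

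Next I would extend $P_0$ at the $Y_1$-end greedily into a blue-square path using roughly $\tfrac32 n$ vertices of $Y_1$ (using that all but at most $\alpha n$ vertices of $Y_1$ are blue-adjacent to any two fixed vertices — wait, in $Y_1$ edges are red; instead use $(X_1,Y_1)$ which is blue, alternating, or note that $Y_1$ together with $X_1$ supports long blue square paths). More carefully, I would extend into the blue ``bipartite-like'' structure $(X_1,Y_1)$ and into $(X_1,X_2,Z)$ at the $z$-end, aiming to collect about $3n$ vertices total so that the resulting square path has at least $3n+1$ vertices. At each step the extension is possible because all relevant pairs are $\alpha n$-monochromatic and $\alpha = \tfrac1{1000}$ is tiny compared to the part sizes $(1\pm\alpha)n$ and $(2\pm\alpha)n$; the common-neighbour counts never drop below, say, $\tfrac12 n$, so the greedy process cannot stop before the target length. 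A blue $P_{3n+1}^2$ then contradicts the assumption on $G$.

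The main obstacle is bookkeeping which sets are available at the $z$-endpoint. Unlike Claim~\ref{claim91}, here $z\in Z$ and $Z$ has only about $n$ vertices, so we cannot absorb $2n$ vertices entirely inside $Z$; we must route through $X_1$ and $X_2$ (whose internal edges are blue by Claim~\ref{claim91}) and use the blue pairs $(X_1,Z)$, $(X_2,Z)$ that are being proven — but of course we only know them to be $\alpha n$-blue a priori, which is exactly enough for a greedy square-path extension since at each step we only need a \emph{common} blue neighbour of the last two vertices, discarding the at most $2\alpha n$ bad choices. The symmetric statements for $(X_2,Z)$, $(Y_1,Z)$, $(Y_2,Z)$ follow by the identical argument with colours/indices swapped, using that $(Y_i,Z)$-blue and $(Y_i,Y_j)$-red give the analogous red-free long square path when a red $(Y_i,Z)$-edge is assumed; so it suffices to write out the one case and remark that the others are symmetric.
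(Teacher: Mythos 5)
Your overall strategy (build a short blue seed square-path through the bad edge $x_1z$, then greedily extend both ends past $3n+1$ vertices) is the same as the paper's, but the colour bookkeeping in your execution is reversed in several places, and as written the construction fails. First, you propose to take $x_1'\in X_1$ as a common \emph{blue} neighbour of $x_1$ and $z$, justified by "$z$ has at most $\alpha n$ red neighbours in $X_1\cup X_2$". This is backwards: the pairs $(X_1,Z)$ and $(X_2,Z)$ are the ones asserted (and known a priori, up to $\alpha n$ exceptions per vertex) to be \emph{red}, so $z$ has at most $\alpha n$ \emph{blue} neighbours in $X_1$, and a common blue neighbour of $x_1$ and $z$ inside $X_1$ need not exist. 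Second, you propose $y_1,y_1'\in Y_1$ "forming a blue $K_4$-like configuration": edges inside $Y_1$ are red, so $y_1y_1'$ cannot serve as an edge of a blue square path. (You notice this mid-proof but do not resolve it; "alternating" inside the bipartite pair $(X_1,Y_1)$ cannot work either, since a square path contains triangles and a bipartite graph contains none.) Third, you extend the $z$-end into $(X_1,X_2,Z)$ — but that is the \emph{red} tripartite structure; the blue tripartite structure containing $Z$ is $(Y_1,Y_2,Z)$.

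The paper's proof fixes exactly these points: it takes $y_1\in Y_1$ to be a common blue neighbour of $x_1$ and $z$ (using that $(X_1,Y_1)$ and $(Y_1,Z)$ are mostly blue), then $x_1'\in X_1$ a common blue neighbour of $y_1$ and $x_1$, giving the seed $P_0=z,y_1,x_1,x_1'$ with $P_0^2$ blue. It then extends the $x_1,x_1'$ end inside $X_1$ (all blue by Claim~\ref{claim91}, contributing about $\tfrac32 n$ vertices) and the $z,y_1$ end through the blue tripartite structure $(Y_1,Y_2,Z)$ (contributing about $2n$ vertices). To repair your write-up you would need to swap the roles you assigned to the red and blue structures accordingly; the greedy common-neighbour extension argument itself is fine once the routes are corrected.
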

\begin{claimproof}
Assume by contradiction that there is a blue edge $x_1z$ between $X_1$ and $Z$. Let $y_1\in Y_1$ be in the common blue neighbourhood of $x_1$ and $z$ (which exists by arguments similar to the ones above).

Take $x_1'\in X_1\setminus\llb x_1\rrb$ in the common blue neighbourhood of $y_1$ and $x_1$ and let $P_0=z,y_1,x_1,x_1'$. We have that $P_0^2$ is blue monochromatic. Also, we can greedily extend $P_0$ to a path $P$ such that $P^2$ is also blue monochromatic and $\vass{P}>3n$ by extending $x_1, x_1'$ to a path of length at least $\frac{3}{2}n$ in $X_1$ and extending the $zy_1$ end in $(Y_1, Y_2, Z)$ by at least $2n$ vertices.

The argument for the other pairs is symmetric.
\end{claimproof}

\begin{claim}\label{claim93}
The pairs $(X_1, Y_1)$ and $(X_2, Y_2)$ are entirely blue, while the pairs $(X_1, Y_2)$ and $(X_2, Y_1)$ are entirely red.
\end{claim}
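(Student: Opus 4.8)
The plan is to mimic the greedy extension strategy established in Claims~\ref{claim91} and~\ref{claim92}: assume for contradiction that some pair, say $(X_1,Y_1)$, contains a red edge $x_1y_1$ with $x_1\in X_1$ and $y_1\in Y_1$, and show this forces a monochromatic $P^2_{3n+1}$ (indeed a red one). By symmetry the same argument will handle $(X_2,Y_2)$ for red edges, and the mirror-image argument (swapping colours and the roles of the $X$'s and $Y$'s) will show $(X_1,Y_2)$ and $(X_2,Y_1)$ contain no blue edge.

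First I would build a short red seed path $P_0$ whose square is red, one of whose ends sits in $X_1$ and the other in a region where I can append about $2n$ vertices. By the earlier claims, all edges inside $X_1$ are blue, so I cannot grow a red path through $X_1$ directly; instead I use $X_1$ only at one endpoint. The key observation is that by Claim~\ref{claim91} the set $Y_1$ is entirely red, and by Claim~\ref{claim93}'s predecessors $(Y_1,Z)$ is entirely blue while $(Y_1,Y_2)$—hmm, here I must be careful, since $(Y_1,Y_2)$ being red or blue is exactly part of what is being proved. So I would instead route through $Y_1$ alone, or through $(Y_1,Y_2)$ using only the condition that $(Y_1,Y_2)$ is $\alpha n$-red (from condition~(d) of Lemma~\ref{regularityresult}, after the pruning steps). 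Concretely: pick $y_1'\in Y_1$ a common red neighbour of $x_1$ and $y_1$ (possible since $x_1$ has at most $\alpha n$ blue neighbours in $Y_1$ and $|Y_1|\ge(2-\alpha)n$ together with $y_1$'s red degree inside $Y_1$), giving a red triangle $x_1y_1y_1'$. Then, using the common red neighbourhood of $x_1,y_1$ in $Y_2$ (the pair $(X_1,Y_2)$ has $\le\alpha n$ blue edges per vertex after pruning, and $(Y_1,Y_2)$ is $\alpha n$-red), I obtain $y_2,y_2'\in Y_2$ forming red triangles joined to the seed, producing a red seed path $P_0 = y_2',y_2,x_1,y_1,y_1'$ with $P_0^2$ red.

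Next I would greedily extend $P_0$ at both ends. At the $y_1'$-end, I extend a red path through $Y_1$ (entirely red, large) by at least $\tfrac32 n$ vertices: having built $\dots,p_{\ell-1},p_\ell$ with $\ell<\tfrac32 n$, all but at most $2\alpha n$ vertices of $Y_1$ are red-adjacent to both $p_{\ell-1}$ and $p_\ell$, so a red neighbour of both in $Y_1\setminus P$ exists as long as $|Y_1|\ge\tfrac32 n + 2\alpha n + O(1)$, which holds. At the $y_2'$-end, I extend through the tripartite red structure $(X_2, Y_2, Z)$—here I use that $(X_2,Y_2)$, $(X_2,Z)$ are entirely red by Claims~\ref{claim91},~\ref{claim92} and $(Y_2,Z)$ is entirely blue, so actually I route through $(X_2,Z)$ or through $Y_2$ together with $X_2$; more cleanly I extend a red path inside $Y_2\cup X_2$ using that every vertex of $Y_2$ is red to almost all of $X_2$ and $X_2$ is blue internally so I alternate, or simpler still extend through $Y_2$ alone for another $\tfrac32 n$ vertices. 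Combining, $P_0$ grows to a red path $P$ with $|P| \ge 3n+2$ and $P^2$ monochromatic red, contradicting our assumption. The totals must be checked: $Y_1$ alone has $\ge(2-\alpha)n$ vertices and $Y_2$ alone $\ge(2-\alpha)n$, so extending $\tfrac32 n$ into each plus the seed gives well over $3n+1$ vertices—this arithmetic is routine.

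The main obstacle is the bookkeeping around which pairs are already known to be monochromatic versus merely $\alpha n$-sparse in one colour at the moment this claim is invoked: Claims~\ref{claim91} and~\ref{claim92} have been proved, so $X_i$, $Y_i$ internally and the $(X_i,Z)$, $(Y_i,Z)$ pairs are exactly monochromatic, but the pairs $(X_1,X_2)$, $(Y_1,Y_2)$, $(X_i,Y_j)$ are still only controlled up to $\alpha n$ bad edges per vertex. I must make sure every greedy step uses only a pair with the right bound, and that I never need $(X_1,Y_1)$ itself to be blue (the very thing under contradiction). Choosing to route all long extensions through the genuinely-monochromatic-red sets $Y_1$ and $Y_2$ (which together already dwarf $3n$) sidesteps this: the only place I touch a not-yet-resolved pair is to place the $O(1)$-length seed, where the $\alpha n$-sparseness plus the large common neighbourhoods comfortably suffice. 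Once the routing is fixed, the rest is a verbatim repetition of the greedy argument from Claim~\ref{claim91}, and the symmetric cases follow by relabelling.
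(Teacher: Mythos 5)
There is a genuine gap: the colour orientations of several pairs are reversed, and this breaks both your seed and one of your two long extensions. By condition (d) of Lemma~\ref{regularityresult}, vertices in the pairs $(X_1,Y_1)$, $(X_2,Y_2)$ and $(Y_1,Y_2)$ have at most $\alpha n$ \emph{red} neighbours in the opposite part; these pairs are mostly \emph{blue} (indeed $(X_1,Y_1)$ being entirely blue is what you are trying to prove, so a red edge there must be exceptional). Hence your first step --- choosing $y_1'\in Y_1$ a common red neighbour of $x_1$ and $y_1$, justified by ``$x_1$ has at most $\alpha n$ blue neighbours in $Y_1$'' --- fails: $x_1$ has at most $\alpha n$ \emph{red} neighbours in $Y_1$, so such a $y_1'$ need not exist. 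Likewise your appeal to ``$(Y_1,Y_2)$ is $\alpha n$-red'' is backwards ($(Y_1,Y_2)$ is $\alpha n$-blue, matching the pair $(R_1,R_2)$ being $\lambda t$-blue in Lemma~\ref{MainLemma}), so $y_1$ has almost no red neighbours in $Y_2$ and the vertices $y_2,y_2'$ of your seed cannot be found either. Finally, $(X_2,Y_2)$ is \emph{not} ``entirely red by Claims~\ref{claim91} and~\ref{claim92}''; it is one of the pairs the present claim asserts is entirely blue. The underlying issue is that the only named set to which \emph{both} endpoints of the bad red edge $x_1y_1$ are mostly red is $X_2$ (via condition (e) for $(X_1,X_2)$ and $(X_2,Y_1)$): $x_1$ is mostly blue to $Y_1$ and $y_1$ is mostly blue to $Y_2$, so no red square path containing the edge $x_1y_1$ can be routed through $Y_2$ on the $x_1$ side, nor seeded inside $Y_1$ on the $x_1$ side.

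The paper's proof fixes the routing accordingly: take $x_2\in X_2$ a common red neighbour of $x_1$ and $y_1$, then $y_1'\in Y_1$ a common red neighbour of $y_1$ and $x_2$ (possible since $Y_1$ is entirely red and $(X_2,Y_1)$ is mostly red), giving the red seed $P_0=x_1,x_2,y_1,y_1'$ with $P_0^2$ red. One then extends the $y_1,y_1'$ end through $Y_1$ (entirely red, about $2n$ vertices available) and the $x_1,x_2$ end through the red tripartite structure $(X_1,X_2,Z)$, exactly as in the greedy extension of Claim~\ref{claim91}, reaching at least $3n+2$ vertices. Your overall strategy (contradiction plus greedy extension of a short monochromatic seed) is the right one, but as written the seed cannot be built and the $Y_2$-extension uses pairs of the wrong colour; the argument needs to be re-routed through $X_2$ and $(X_1,X_2,Z)$ as above.
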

\begin{claimproof}
Assume by contradiction that there is a red edge $x_1y_1$ in $(X_1, Y_1)$. The vertices $x_1$ and $y_1$ share a red neighbour $x_2$ in $X_2$. We can also find in $Y_1\setminus \llb y_1\rrb$ a common red neighbour $y_1'$ of $y_1$ and $x_2$. 

We can start with the path $P_0=x_1, x_2, y_1, y_1'$, and then extend it using vertices in $Y_1$ on one side and vertices of $(X_1, X_2, Z)$ on the other, until we get a path $P$ such that $P^2$ is monochromatic red and covers at least $3n+2$ vertices.

The argument for the other pairs is symmetric.
\end{claimproof}

\begin{claim}\label{claim94}
The pair $(X_1, X_2)$ has no blue $P_4$, while the pair $(Y_1, Y_2)$ has no red $P_4$.
\end{claim}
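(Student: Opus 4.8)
The plan is to show that a blue $P_4$ in $(X_1,X_2)$ would already force a blue $P^2$ on the \emph{whole} of $X_1\cup X_2$, which by Condition~\ref{conditionA} has more than $3n+1$ vertices, contradicting our assumption that $G$ has no monochromatic $P_{3n+1}^2$. The statement about $(Y_1,Y_2)$ then follows by the symmetric argument, interchanging red and blue and using that $Y_1$ and $Y_2$ are entirely red (Claim~\ref{claim91}).

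So suppose $(X_1,X_2)$ contains a blue $P_4$. As a path in the bipartite graph between $X_1$ and $X_2$, its vertices alternate sides, so after relabelling (possibly swapping $X_1$ and $X_2$) we may write it as $a-b-c-d$ with $a,c\in X_1$ and $b,d\in X_2$, the edges $ab$, $bc$, $cd$ being blue. The key observation is that the \emph{reordered} tuple $(a,c,b,d)$ spans a blue $P_4^2$: its consecutive pairs are $ac$ (an edge inside the blue clique $X_1$, by Claim~\ref{claim91}), $cb=bc$ (a $P_4$-edge), and $bd$ (an edge inside the blue clique $X_2$), while its distance-two pairs are $ab$ and $cd$ (both $P_4$-edges); hence all five are blue, and the unique non-adjacent pair of this $P_4^2$ is $\{a,d\}$.

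Next I would grow this into a blue square path covering $X_1\cup X_2$. List the vertices of $X_1$ in any order ending $\dots,a,c$, say $x_1,\dots,x_p,a,c$ with $p=|X_1|-2$, and the vertices of $X_2$ in any order beginning $b,d,\dots$, say $b,d,y_1,\dots,y_q$ with $q=|X_2|-2$, and set
\[ P:=x_1,\dots,x_p,\,a,\,c,\,b,\,d,\,y_1,\dots,y_q. \]
Then $P^2$ is blue: any two vertices both in the $X_1$-block (resp.\ both in the $X_2$-block) form a blue edge, $X_1$ (resp.\ $X_2$) being a blue clique by Claim~\ref{claim91}; and the only pairs of $P$ at distance at most $2$ straddling the $c$–$b$ junction are $\{c,b\}$, $\{a,b\}$, $\{c,d\}$, $\{b,d\}$, together with pairs of the form $\{x_i,c\}$ or $\{b,y_j\}$ — the first four are blue by the previous paragraph, the last two are internal edges of $X_1$ resp.\ $X_2$. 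Since $|P|=|X_1|+|X_2|\ge 2(2-\alpha)n>3n+1$ for $n$ large, $P^2$ contains a blue $P_{3n+1}^2$, a contradiction.

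The argument is short, and the only point demanding attention — the nearest thing to an obstacle — is the bookkeeping at the junction of the two blocks. One must reorder the $P_4$ from $a,b,c,d$ to $(a,c,b,d)$ so that the block boundary falls on the $P_4$-edge $bc$, ensuring every cross-junction pair at distance $\le 2$ is either a $P_4$-edge, a clique-internal edge, or (in the case of $\{a,d\}$) pushed out to distance $3$ where it plays no role. With the naive ordering $a,b,c,d$ the distance-two pair formed by $b$ and the $X_1$-vertex immediately preceding $a$ would be an $(X_1,X_2)$ pair whose colour is not controlled (and indeed typically red), and the construction would break; the reordering is exactly what avoids needing any $(X_1,X_2)$ edge beyond the three supplied by the $P_4$.
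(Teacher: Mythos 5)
Your proof is correct and follows essentially the same route as the paper: take the blue $P_4$, use Claim~\ref{claim91} to see that the two vertices in each part are joined by a blue edge, reorder so the square-path junction sits on a $P_4$-edge (the paper's ordering $x_1,x_1',x_2,x_2'$ is exactly your $(a,c,b,d)$), and extend within each blue clique to a square path on $|X_1|+|X_2|>3n+1$ vertices. No issues.
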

\begin{claimproof}
Assume $x_1x_2x_1'x_2'$ formed a blue $P_4$ in $(X_1, X_2)$. Since $X_1$ and $X_2$ are entirely blue, the edges $x_1x_1'$ and $x_2x_2'$ are blue. Each of these edges is the beginning of a square of a path covering the respective part. These join together to form a square of a path that is longer than allowed. 

The argument for the other pair is symmetric.
\end{claimproof}

From the claims above we can see that in the situation depicted by Lemma \ref{regularityresult} we have $\vass{X_1}, \vass{X_2}, \vass{Y_1}, \vass{Y_2}\leq 2n-1$. We can now partition the vertices of the remainder set $R$ depending on their neighbourhoods as follows.
\begin{itemize}
 \item[1)] Let us denote with $R_Z$ the set of vertices in $R$ with more than $\frac{n}{4}$ red neighbours both in $X_1$ and $X_2$,
 \item[2)] for $i=1,2$ let $R_i$ be the vertices in $R$ with more than $\frac{n}{4}$ blue neighbours in both $X_i$ and $Y_i$,
 \item[3)] let $R_{12}$ denote the vertices in $R$ with more than $\frac{n}{4}$ red neighbours in both $X_1$ and $Y_2$,
 \item[4)] let $R_{21}$ denote the vertices in $R$ with more than $\frac{n}{4}$ red neighbours in both $X_2$ and $Y_1$,
 \item[5)] let $R^*$ denote any vertices in $R$ that are not in any of the above sets.
\end{itemize}

\begin{claim}\label{claim95}
Vertices in $R^*$ have at least $\frac{3}{2}n$ blue neighbours in each $X_i$ and at least $\frac{3}{2}n$ red neighbours in each $Y_i$. Moreover, $\vass{R^*}\leq 1$.
\end{claim}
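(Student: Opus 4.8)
The plan is to prove the two assertions of the claim separately. The statement about the neighbourhoods of a vertex $v\in R^*$ is purely a counting argument from the definitions of $R_Z,R_1,R_2,R_{12},R_{21}$ together with the size bounds of Lemma~\ref{regularityresult}\ref{conditionA}; the bound $\vass{R^*}\le 1$ is then obtained by showing that two distinct vertices of $R^*$ would force a monochromatic $P^2_{3n+1}$.

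First I would fix $v\in R^*$ and, for $W\in\{X_1,X_2,Y_1,Y_2\}$, call $v$ \emph{red-heavy} (resp.\ \emph{blue-heavy}) to $W$ if $v$ has more than $\tfrac n4$ red (resp.\ blue) neighbours in $W$; since $\vass{W}\ge(2-\alpha)n>\tfrac n2$, $v$ is red-heavy or blue-heavy to each $W$. If $v$ were red-heavy to $X_1$, then $v\notin R_Z$ forces $v$ not red-heavy to $X_2$ and $v\notin R_{12}$ forces $v$ not red-heavy to $Y_2$, so $v$ has at most $\tfrac n4$ red neighbours in each of $X_2,Y_2$, hence more than $\tfrac n4$ blue ones in each, hence $v\in R_2$ — a contradiction. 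By the symmetric argument (using $R_Z$, $R_{21}$ and $R_1$) $v$ is not red-heavy to $X_2$ either. Now if $v$ were blue-heavy to $Y_1$, then $v\notin R_1$ would force $v$ not blue-heavy to $X_1$, i.e.\ $v$ red-heavy to $X_1$, a contradiction; so $v$ is not blue-heavy to $Y_1$, and symmetrically (via $R_2$ and the fact that $v$ is not red-heavy to $X_2$) not blue-heavy to $Y_2$. Thus $v$ has at most $\tfrac n4$ red neighbours in each $X_i$ and at most $\tfrac n4$ blue neighbours in each $Y_i$, and combining this with $\vass{X_i},\vass{Y_i}\ge(2-\alpha)n$ gives the claimed $\tfrac{3n}2$ blue (resp.\ red) neighbours in each $X_i$ (resp.\ $Y_i$).

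For the bound on $\vass{R^*}$, suppose for contradiction that $u,v\in R^*$ are distinct. I would split on the colour of the edge $uv$. Assume $uv$ is blue. By the first part each of $u,v$ has at least $\tfrac{3n}2$ blue neighbours in $X_1$ and at least $\tfrac{3n}2$ in $X_2$; since $\vass{X_1},\vass{X_2}\le(2+\alpha)n$ by Lemma~\ref{regularityresult}\ref{conditionA}, $u$ and $v$ have at least $(1-\alpha)n\ge 1$ common blue neighbours in each of $X_1$ and $X_2$. Order $X_1$ as $p_1,\dots,p_a$ (where $a=\vass{X_1}$) so that $p_a$ is a common blue neighbour of $u$ and $v$ and $p_{a-1}$ is another blue neighbour of $u$, and order $X_2$ as $q_1,\dots,q_b$ (where $b=\vass{X_2}$) so that $q_1$ is a common blue neighbour of $u$ and $v$ and $q_2$ is another blue neighbour of $v$. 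Since $X_1$ and $X_2$ are blue cliques by Claim~\ref{claim91} and $uv$ is blue, every three consecutive vertices of the sequence $p_1,\dots,p_a,u,v,q_1,\dots,q_b$ span a blue triangle, so this sequence is a blue $P^2$ on $\vass{X_1}+\vass{X_2}+2\ge(4-2\alpha)n+2>3n+1$ vertices, contradicting the assumption that $G$ contains no monochromatic $P^2_{3n+1}$. If instead $uv$ is red, the same construction with $X_1,X_2$ replaced by the red cliques $Y_1,Y_2$ (again via Claim~\ref{claim91}) produces a red $P^2_{3n+1}$. In either case we have the desired contradiction, so $\vass{R^*}\le1$.

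The only step needing real care is the chain of deductions in the second paragraph: one must rule out ``red-heavy to some $X_i$'' before using that to rule out ``blue-heavy to $Y_i$'', since the five exceptional sets were defined exactly so that these combinations of heavy neighbourhoods land a vertex in one of them. The construction in the third paragraph is routine once one views $u,v$ together with the edge $uv$ as a bridge joining two vertex-disjoint monochromatic cliques whose sizes already exceed $3n$ in total; the only thing to check is that the three triples straddling the bridge are monochromatic triangles, which is exactly what the placement of the last two vertices of one clique and the first two of the other guarantees.
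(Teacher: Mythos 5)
Your proof is correct and follows the same route as the paper: the paper dismisses the neighbourhood statement as "true by construction" (your case analysis on heavy neighbourhoods is exactly the intended justification), and for $\vass{R^*}\le 1$ it likewise takes two vertices $u,v$ of $R^*$, uses their large common blue (resp.\ red) neighbourhoods in $X_1,X_2$ (resp.\ $Y_1,Y_2$), and splits on the colour of $uv$ to bridge the two monochromatic cliques into a too-long monochromatic square path.
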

\begin{claimproof}
The first part of the claim is true by construction. Let us now assume that there are two vertices $u$ and $v$ in $R^*$. Then $u$ and $v$ have more than $\frac{n}{2}$ common blue neighbours in each $X_i$ and at least $\frac{n}{2}$ common red neighbours in each $Y_i$. Therefore if $uv$ is blue it will create a blue square of a path with vertices from $X_1$ and $X_2$, while if it is red it will join long red squares of paths in $Y_1$ and $Y_2$.
\end{claimproof}

\begin{claim}\label{cl:96}
We have the following bounds: $\vass{X_1\cup R_1}, \vass{X_2\cup R_2}, \vass{Y_1\cup R_{21}}, \vass{Y_2\cup R_{12}}\leq 2n-1$.
\end{claim}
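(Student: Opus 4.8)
The plan is to prove Claim~\ref{cl:96} by establishing each of the four bounds separately, in each case deriving a contradiction by exhibiting a monochromatic $P_{3n+1}^2$. The four statements are essentially symmetric, so I would work out $|X_1\cup R_1|\le 2n-1$ in detail and then indicate the changes needed for the others. So suppose for a contradiction that $|X_1\cup R_1|\ge 2n$. By condition~\ref{conditionC} of Lemma~\ref{regularityresult} we have $|R_1|\le\alpha n$, hence $|X_1|\ge(2-\alpha)n$; also $|Y_1|\ge(2-\alpha)n\ge n+1$ by condition~\ref{conditionA}, while $X_1$ induces a blue clique and $(X_1,Y_1)$ is entirely blue by Claims~\ref{claim91} and~\ref{claim93}. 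The key structural idea is to build a blue square path on vertices $v_1,\dots,v_{3n+1}$ by putting vertices of $Y_1$ at the $n+1$ positions indexed $\equiv 1\pmod 3$ and vertices of $X_1\cup R_1$ at the remaining $2n$ positions (there is room since $|Y_1|\ge n+1$ and $|X_1\cup R_1|\ge 2n$): then no two $Y_1$-vertices ever sit within distance $2$ in the path, so their mutually red internal edges are never used, and any edge with both endpoints in $X_1$, or with one endpoint in $X_1$ and one in $Y_1$, is automatically blue.

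The main obstacle — and the only point needing care — is that a vertex $r\in R_1$ is blue-adjacent to only more than $n/4$ vertices of $X_1$ and more than $n/4$ vertices of $Y_1$, not to almost all of them. I would handle this by placing the (at most $\alpha n$) vertices of $R_1$ first: send each $r$ to a not-yet-used position $p\not\equiv 1\pmod 3$ with $|p-p'|\ge 10$ for every previously placed $R_1$-vertex at position $p'$ (there is ample room), then reserve for each of the at most two nearby positions $\equiv 1\pmod 3$ a vertex from $r$'s blue-neighbourhood in $Y_1$, and for each of the at most two nearby positions $\not\equiv 1\pmod 3$ (which the distance-$10$ rule forces to receive $X_1$-vertices rather than other $R_1$-vertices) a vertex from $r$'s blue-neighbourhood in $X_1$. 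Since $|R_1|\le\alpha n$ and $n/4>4\alpha n$, these at most $4|R_1|$ reservations never exhaust a neighbourhood, and the nearby $\equiv 1\pmod 3$ positions of distinct $R_1$-vertices are pairwise disjoint, so there is no conflict. I would then fill every remaining position arbitrarily with a leftover vertex of $X_1\cup R_1$ or of $Y_1$ (possible by the size bounds). By construction every edge $v_iv_j$ with $|i-j|\le 2$ is blue, so $P^2$ is a blue copy of $P_{3n+1}^2$, contradicting the standing assumption on $G$; hence $|X_1\cup R_1|\le 2n-1$.

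For $|X_2\cup R_2|\le 2n-1$ I would repeat the construction verbatim with $(X_2,R_2,Y_2)$ in place of $(X_1,R_1,Y_1)$, using that $X_2$ is a blue clique and $(X_2,Y_2)$ is entirely blue. For $|Y_1\cup R_{21}|\le 2n-1$ and $|Y_2\cup R_{12}|\le 2n-1$ I would run the identical argument with the colour red in place of blue: here $Y_1$ (respectively $Y_2$) induces a red clique and $(X_2,Y_1)$ (respectively $(X_1,Y_2)$) is entirely red by Claims~\ref{claim91} and~\ref{claim93}, a vertex of $R_{21}$ (respectively $R_{12}$) has more than $n/4$ red neighbours in each of $Y_1$ and $X_2$ (respectively $Y_2$ and $X_1$) by definition, and $|X_2|,|X_1|\ge(2-\alpha)n\ge n+1$ by condition~\ref{conditionA}; placing the vertices of $Y_1$ (respectively $Y_2$) densely and vertices of $X_2$ (respectively $X_1$) at the positions $\equiv 1\pmod 3$ then yields a red $P_{3n+1}^2$ whenever the relevant set has at least $2n$ vertices, which completes the proof.
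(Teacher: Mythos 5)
Your proposal is correct and follows essentially the same route as the paper: both build a blue square path on the repeating pattern of two $X_1\cup R_1$-vertices followed by one $Y_1$-vertex, insert each of the at most $\alpha n$ vertices of $R_1$ into well-separated slots flanked by vertices chosen from its blue neighbourhoods in $X_1$ and $Y_1$ (possible since $n/4\gg\alpha n$), and fill the remaining positions greedily using that $X_1$ is a blue clique and $(X_1,Y_1)$ is entirely blue. The paper phrases this as fixing disjoint blue triangles $r_i x_i y_i$ and interleaving them with blocks $a_i,b_i,c_i$ before extending, but this is the same construction as your spacing-and-reservation scheme.
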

\begin{claimproof}
Assume by contradiction that $\vass{X_1\cup R_1}\geq 2n$. Recall that in previous claims we proved that all the edges in $X_1$ and $(X_1, Y_1)$ are blue. Let us label the vertices in $R_1$ by $r_1,\dots{}, r_{\ell}$. Recall that $\ell=\vass{R_1}\leq \vass{R}\leq \alpha n$.

Since every vertex in $R_1$ has at least $\frac{n}{4}$ blue neighbours in both $X_1$ and $Y_1$ we can find disjoint blue triangles $T_1,\dots{}, T_{\ell}$ where triangle $T_i$ contains the vertices $r_i,x_i,y_i$ with $x_i\in X_1$ and $y_i\in Y_1$. We next find for each $i\in[\ell]$ vertices $a_i,b_i,c_i,a'_i,b'_i,c'_i$ as follows. We let $c_i$ be a blue neighbour of $r_i$ in $Y_1$, and $a_i,b_i\in X_1$, we let $a'_1$ be a neighbour in $X_1$ of $r_1$, $b'_1$ be in $X_1$, and $c'_1$ be in $Y_1$. Observe that since $\ell\le\alpha n$, we can ensure that all these vertices are different.

By construction, the vertex ordering $P_0=(a_1,b_1,c_1,x_1,r_1,y_1,a'_1,b'_1,c'_1,\dots)$, where we repeat the same letter ordering for $i=2$ and so on afterwards, is a blue square path. We extend $P_0$ further by choosing distinct vertices from $X_1$, $X_1$ and then $Y_1$ in this order, until no unused vertices remain in $X_1$. As $\vass{X_1\cup R_1}\geq 2n$, what we obtain is a blue square path with at least $3n$ vertices, if $\vass{X_1\cup R_1}\geq 2n+1$ we obtain at least $3n+1$ vertices. We can extend $P_0$ by one more vertex by adding a so far unused vertex of $Y_1$ at the start of the ordering. This gives the required $3n+1$-vertex square path (and $3n+2$ vertices if $\vass{X_1\cup R_1}\geq 2n+1$). The arguments for the other pairs of sets are the same.
\end{claimproof}

\begin{claim}\label{claim97}
We have that $\vass{Z\cup R_Z}\leq n-1$.
\end{claim}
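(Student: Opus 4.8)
The plan is to prove the contrapositive: assuming $\vass{Z\cup R_Z}\ge n$, I would construct a red $P_{3n+1}^2$, contradicting the standing assumption that $G$ contains no monochromatic $P_{3n+1}^2$. The square path will follow the tripartite pattern whose vertices in positions $\equiv 2,0,1\pmod 3$ lie in $X_1$, in $Z\cup R_Z$, and in $X_2$ respectively. Concretely I would pick $n$ vertices $w_1,\dots,w_n$ of $Z\cup R_Z$ containing all of $R_Z$ (possible since $\vass{R_Z}\le\vass R\le\alpha n<n\le\vass{Z\cup R_Z}$), fix a seed vertex $v_0\in X_2$ with $w_1\in Z$, and greedily build
\[v_0,\ x_1^1,\ w_1,\ x_2^1,\ x_1^2,\ w_2,\ x_2^2,\ \dots,\ x_1^n,\ w_n,\ x_2^n\]
with the $x_1^i\in X_1$ and $x_2^i\in X_2$ all distinct and different from $v_0$. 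In this ordering the only pairs of vertices at distance $1$ or $2$ run between two different parts (an $X_1$-vertex with an $X_2$-vertex, or a vertex of $Z\cup R_Z$ with an $X_1$- or $X_2$-vertex): no two $X_1$-vertices, no two $X_2$-vertices and no two vertices of $Z\cup R_Z$ are ever at distance $\le2$. Hence the internal edges of $X_1$, of $X_2$ and of $Z\cup R_Z$ are irrelevant, and it suffices to make every such cross pair red.

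By Claim~\ref{claim92} all $X_1$--$Z$ and $X_2$--$Z$ edges are red, so any pair involving a $w_i\in Z$ is automatically red; each $x_1^i$ is at distance $\le2$ only from $w_{i-1},w_i,x_2^{i-1},x_2^i$, and since every vertex of $X_1$ has at most $\alpha n$ blue neighbours in $X_2$ (condition (e) of Lemma~\ref{regularityresult}), the $X_1$--$X_2$ edges stay red provided that, when choosing a new hub, we avoid the at most $\alpha n$ blue neighbours of the last hub placed. As $\vass{X_1},\vass{X_2}\ge(2-\alpha)n$ (condition (a) of Lemma~\ref{regularityresult}) and the construction uses at most $n+1$ vertices of either part in total, at every step at least $(2-\alpha)n-(n+1)-\alpha n>0$ admissible hub vertices remain, so the greedy process never halts and the $X_1$--$X_2$ pairs can be kept red.

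The hard part will be accommodating the vertices of $R_Z$: such a vertex $w$ has only more than $\tfrac n4$ red neighbours in each of $X_1,X_2$, so the (up to four) hub vertices at distance $\le2$ from $w$ must be drawn from these quarter-sized sets, and a careless greedy could get stuck once a quarter of a hub class has been consumed. I would handle this using that $R_Z$ is tiny (it lies inside $R$, of size $\le\alpha n$): list $w_1,\dots,w_n$ so that every vertex of $R_Z$ occurs among the first $3\alpha n$ of them and no two vertices of $R_Z$ are consecutive, padding with vertices of $Z$ (possible since $\vass{R_Z}\le\alpha n$ and $\vass Z\ge(1-\alpha)n$). Then each $R_Z$-vertex is flanked by $Z$-vertices, which put no constraint on the flanking hubs, and it is processed while fewer than $\alpha n$ hubs of each class have been used, so the admissible set for a hub next to an $R_Z$-vertex $w$---the red neighbours of $w$ in the relevant part, minus at most $\alpha n$ vertices blue to the adjacent hub and fewer than $\alpha n$ already used---still has size more than $\tfrac n4-2\alpha n>0$. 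Choosing $w_1\in Z$ makes $v_0w_1$ red for free, and $v_0$ can be chosen red to $x_1^1$, so the sequence above is a red $P_{3n+1}^2$; this is the contradiction. Granted this, the partition $V(G)=(X_1\cup R_1)\cup(X_2\cup R_2)\cup(Y_1\cup R_{21})\cup(Y_2\cup R_{12})\cup(Z\cup R_Z)\cup R^*$, combined with Claims~\ref{cl:96} and~\ref{claim95}, gives $N\le4(2n-1)+(n-1)+1=9n-4$, contradicting $N\ge9n-3$ and finishing Theorem~\ref{thm:main} for $P_{3n+1}^2$.
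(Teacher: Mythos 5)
Your proof is correct and follows essentially the same route as the paper: both build a red square path on $3n+1$ vertices with the repeating tripartite pattern $X_1$, $Z\cup R_Z$, $X_2$, handling the few $R_Z$-vertices via their quarter-sized red neighbourhoods in $X_1$ and $X_2$, while the $(X_i,Z)$ pairs are entirely red and the $(X_1,X_2)$ pairs are kept red greedily. The only cosmetic difference is that the paper invokes Claim~\ref{claim94} to exclude a small set $X_{\text{bad}}$ of vertices with many blue neighbours in the opposite part, whereas you use condition (e) of Lemma~\ref{regularityresult} directly; both suffice.
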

\begin{claimproof}
Let as assume that $\vass{Z\cup R_Z}\geq n$ and let us label the vertices in $R_Z$ by $r_1,\dots{}, r_{\ell}$. Since $(X_1, X_2)$ has no blue path on $4$ vertices, there are at most $40$ vertices in $X_1\cup X_2$ with more than $\frac{n}{20}$ blue neighbours in the opposite part. Call the set of these vertices $X_{\text{bad}}$. Since each vertex in $R_Z$ has more than $\frac{n}{4}$ red neighbours in each $X_i$, we can find disjoint red triangles $T_1, \dots{}, T_{\ell}$ such that each $T_i$ uses $r_i$, a vertex $x_i^1\in X_{1}\setminus X_{\text{bad}}$ and a vertex $x_i^2\in X_2\setminus X_{\text{bad}}$. 

The idea is now to find for each $i\in [\ell]$ vertices $a_i, a_i'\in X_1$, $b_i, b_i'\in Z$, $c_i, c_i'\in X_2$ such that for every $i\in [\ell-1]$ we have that $(x_i^1, r_i, x_i^2, a_i, b_i, c_i, a_i', b_i', c_i', x_{i+1}^1, r_{i+1}, x_{i+1}^2)$ is a red square of a path. But this can be done greedily since $\ell\leq \alpha n$. We now build the path $P_0=(x_1^1, r_1, x_1^2, a_1, b_1, c_1, a_1', b_1', c_1', x_{2}^1, \dots{}, x_{\ell}^2)$ which by construction has the property that $P_0^2$ is red. 

We can extend $P_0$ by choosing distinct vertices from $X_1$, $Z$ and then $X_2$ in this order, until no unused vertices remain in $Z$. As $\vass{Z\cup R_Z}\geq n$, what we obtain is a red square path with at least $3n$ vertices.
\end{claimproof}

Putting the bounds from the last three claims together, we see $|G|\le 1+4(2n-1)+n-1=9n-4$, which contradiction completes the proof.
\end{proof}

The proof for $P_{3n+2}^2$ is almost verbatim as above (we actually worked with $P_{3n+2}^2$ in most of the claims), with the exception that in Claim~\ref{cl:96} we obtain the upper bound $|X_1\cup R_1|\le 2n$, as explained in the proof of that claim, and consequently a final upper bound $|G|\le 1+4(2n)+n-1=9n$ for a contradiction.

\begin{proof}[Sketch proof of cycle case of Theorem~\ref{thm:main}]
In order to prove that for $n$ large enough we have $R(C_{3n}^2,C_{3n}^2)=9n-3$, is suffices to modify our previous proof. We start by constructing the same partition we built at the beginning of the proof of Theorem \ref{thm:main} to get the sets $X_1, X_2, Y_1, Y_2, Z, R$. Now, by using the same technique introduced in Claim \ref{claim91} we can prove some weakened for of Claims \ref{claim91}, \ref{claim92}, \ref{claim93}, \ref{claim94}. Which is, we can prove that in $X_1$ we cannot find two disjoint red edges (the same holds for $X_2$), in $Y_1$ we cannot find two disjoint blue edges (the same holds for $Y_2$). Similarly, we cannot find two disjoint edges of the wrong colours in none of the following pairs: $(X_1, Z)$, $(X_2, Z)$, $(Y_1, Z)$, $(Y_2, Z)$, $(X_1, Y_1)$, $(X_2, Y_2), (X_1, Y_2), (X_2, Y_1)$. Moreover, we cannot find two vertex-disjoint $P_4$ of the wrong colour in $(X_1, X_2)$ nor in $(Y_1, Y_2)$.\\
To 
From these results and the previously proved Lemma \ref{regularityresult}, we can see that also in this case we have $\vass{X_1}, \vass{X_2}, \vass{Y_1}, \vass{Y_2}\leq 2n-1$. Now we can define the same partition of $R$ in sets $R_Z, R_1, R_2, R_{12}, R_{21}$ and $R^*$. 
Let us point out that from this modified version of Claim \ref{claim91} we have that there are two vertices $a, b\in X_1$ such that all edges in $G[X_1\setminus \llb a,b\rrb]$ are blue. In particular, from Claims \ref{claim91}, \ref{claim92}, \ref{claim93}, \ref{claim94} we get that up to moving at most $10$ vertices from $X_1$ to $R_1$ (and similarly from $X_2$ to $R_2$, from $Y_1$ to $R_{21}$, from $Y_2$ to $R_{12}$ and from $Z$ to $R_Z$) all the vertices in $X_1$ (and similarly in $X_2, Y_1, Y_2, Z$) are incident only to edges of the right colour in $G[X_1\cup X_2\cup Y_1\cup Y_2\cup Z]$, with the possible exception of edges in $(X_1, X_2)$ and $(Y_1, Y_2)$.\\
We now aim to explain how to modify Claim \ref{claim95} to hold for cycles and how to modify the proof of Claims \ref{cl:96}, \ref{claim97}. The first part of Claim \ref{claim95} holds by construction without any modifications. The second part of Claim \ref{claim95} needs to be modified to state that we cannot find two parallel edges of the same colour in $R^*$. Indeed, otherwise we could find a long monochromatic blue cycle $C$ such that $C^2$ is also blue by using vertices from $X_1$, $X_2$ and the two blue edges in $R^*$. In particular, this implies that $\vass{R^*}\leq 4$.
As a guide to show how to modify the proofs of Claim \ref{cl:96} and \ref{claim97}, we give a sketch of the modifications needed for Claim \ref{cl:96}. If we assume by contradiction that $\vass{X_1\cup R_1}\geq 2n$ we can almost verbatim repeat the same proof, having care of extending our path $P_0$ in both directions and making sure that the two endpoints of $P_0$ and their neighbours are adjacent in blue to each other. This is possible because $G[X_1]$ is entirely blue as claimed above. 
\begin{claim}
If $R^*$ contains a blue edge, then $\vass{X_1\cup R_1}, \vass{X_2\cup R_2}\leq 2n-2$ (same holds for red, $Y_1\cup R_{21}$ and $Y_2\cup R_{12}$).
\end{claim}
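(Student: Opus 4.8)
The plan is to argue by contradiction. Suppose $R^*$ contains a blue edge $uv$ and that, contrary to the claim, $\vass{X_1\cup R_1}\ge 2n-1$; we will produce a blue square of a cycle on $3n$ vertices, contradicting our standing assumption that $G$ has no monochromatic $C_{3n}^2$. The bound for $X_2\cup R_2$ is obtained by swapping the indices $1$ and $2$ throughout, and the statement for $Y_1\cup R_{21}$ and $Y_2\cup R_{12}$ when $R^*$ has a red edge follows by the same argument with the colours interchanged, $X_i$ replaced by $Y_i$ and $R_1$ replaced by $R_{21}$ (using that $G[Y_1]$ is entirely red, that $(X_2,Y_1)$ is entirely red, that every vertex of $R_{21}$ has at least $\tfrac n4$ red neighbours in each of $X_2$ and $Y_1$, and that each vertex of $R^*$ has at least $\tfrac32 n$ red neighbours in each $Y_i$).

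We will use the following facts, available from the cycle versions of Claims~\ref{claim91}--\ref{claim94} together with the remark that after moving at most ten vertices each set is incident only to edges of its ``right'' colour inside $G[X_1\cup X_2\cup Y_1\cup Y_2\cup Z]$: $G[X_1]$ is entirely blue, $(X_1,Y_1)$ is entirely blue, every vertex of $R_1$ has at least $\tfrac n4$ blue neighbours in each of $X_1$ and $Y_1$, and $u,v$ each have at least $\tfrac32 n$ blue neighbours in $X_1$ with $uv$ blue; moreover $u,v\notin X_1\cup R_1\cup Y_1$. Since $\vass{X_1}\le(2+\alpha)n$, the set $W$ of vertices of $X_1$ that are blue to both $u$ and $v$ has size at least $(1-\alpha)n$. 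The first step is to fix four distinct vertices: $p_1,q_1\in W$, a vertex $p_2\in X_1$ blue to $v$, and a vertex $q_2\in X_1$ blue to $u$; these will be the two ends, and the neighbours of the ends, of the square path built next.

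Second, running the construction in the proof of Claim~\ref{cl:96} almost verbatim, build a blue square path $P$ inside $X_1\cup R_1\cup Y_1$ that uses every vertex of $X_1\cup R_1$: form blue triangles $x_ir_iy_i$ with $r_i\in R_1$, $x_i\in X_1$, $y_i\in Y_1$ (greedily, which is possible since $\vass{R_1}\le\alpha n+10$), link them by ``filler'' blocks made of two vertices of $X_1$ and one of $Y_1$ ordered so that every pair at distance at most $2$ in $P$ is blue, and require that $P$ begins with $p_1,p_2$ and ends with $q_2,q_1$ (both pairs lying in $X_1$, where $G$ is entirely blue). When $\vass{X_1\cup R_1}$ is odd one vertex of $X_1$ is left over, and it is absorbed by replacing a fragment $\dots,x,x',y,\dots$ of $P$ (with $x,x'\in X_1$, $y\in Y_1$) by $\dots,x,x',x'',y,\dots$. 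The same count as in Claim~\ref{cl:96} then gives $\vass{P}\ge\tfrac32\vass{X_1\cup R_1}-\tfrac12\ge 3n-2$. Finally we close $P$ through $u$ and $v$: writing $P=p_1,p_2,\dots,q_2,q_1$, consider the cyclic order $p_1,p_2,\dots,q_2,q_1,u,v$; the only pairs at cyclic distance at most $2$ not already present in $P$ are $q_1u,uv,vp_1$ at distance $1$ and $q_2u,q_1v,up_1,vp_2$ at distance $2$, all of which are blue by the choice of $p_1,p_2,q_1,q_2$ and since $uv$ is blue. Hence the square of this cycle is blue and has $\vass{P}+2\ge 3n$ vertices, so $G$ contains a blue $C_{3n}^2$, a contradiction; therefore $\vass{X_1\cup R_1}\le 2n-2$, and the remaining cases follow symmetrically.

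The one point that needs care is the vertex count in the middle step: because the bridge through $uv$ contributes only two vertices, there is no slack, so $P$ must be pushed all the way to $3n-2$ vertices. This forces us to use every vertex of $X_1\cup R_1$ and to keep track of the parity correction when $\vass{X_1\cup R_1}$ is odd — but this is exactly the bookkeeping already carried out in the proof of Claim~\ref{cl:96}, so it can be quoted, and it is the only non-routine ingredient.
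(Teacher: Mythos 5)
Your proof is correct and follows essentially the same route as the paper: replicate the square-path construction of Claim~\ref{cl:96} inside $X_1\cup R_1\cup Y_1$ and close it into a square cycle through the blue edge $uv$ of $R^*$, with the end vertices chosen in the common blue neighbourhood of $u$ and $v$. The one point to tighten is that for cycles you need a blue square cycle on \emph{exactly} $3n$ vertices (a longer one need not contain $C_{3n}^2$), so instead of ``using every vertex of $X_1\cup R_1$'' and concluding from $\vass{P}+2\ge 3n$, you should run the construction on exactly $2n-1$ vertices of $X_1\cup R_1$, which by your own tight count gives a cycle on precisely $3n$ vertices.
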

\begin{claimproof}
Assume $R^*$ contains a blue edge $uv$, then $\vass{X_1\cup R_1}\leq 2n-2$ (the arguments for the other cases are the same). In order to prove this, it suffices to show that there exists a maximal matching $T$ in $X_1$ such that we can build a blue cycle $C$ that covers all the edges of $X_1$, the two vertices $u, v\in R^*$ and, for each edge in $T$, an extra vertex in $Y_1$. This can be done because by Claim \ref{claim91} and Lemma \ref{regularityresult} there is a vertex $w\in X_1$ such that the red neighbourhood of $w$ in $X_1$ has size at most $\alpha n$, but $G[X_1\setminus w]$ has at most one red edge and because $u$ and $v$ have both at least $\frac{3}{2}n$ blue neighbours in $X_1$. Therefore, it is possible to build a cycle by replicating the construction in Claim \ref{cl:96} and by carefully adding the edge $uv$ to the cycle.
\end{claimproof}
This suffices to conclude. Indeed, if $\vass{R^*}\leq 3$ then we still have $\vass{X_1\cup R_1\cup R^*\cup X_2\cup R_2}\leq 4n-1$, while if $\vass{R^*}=4$ then we have both a red and a blue edge in $R^*$ (since we cannot have two vertex-disjoint edges of the same colour). In this case we have the following inequalities: $\vass{X_1\cup R_1}, \vass{X_2\cup R_2}, \vass{Y_1\cup R_{21}}, \vass{Y_2\cup R_{12}} \leq 2n-2$, which are enough to obtain the wanted bound.
\end{proof}

%%%%%%%%%%%%%%%%%%%%%%%%%%%%%%%%%%%%%%%%%%%%%%%%%%%%%%%%%%%%%%%%%%%%%%%%%%%

%\begin{corollary}
%For any $h>0$ there exists $\epsilon>0$ such that if we use that $\epsilon$ for Setting \ref{mainsetforG} we have the following. Let $X=\cup_{i\in\mathcal{X}}B_i$ and $Y=\cup_{i\in\mathcal{Y}}B_i$ for some $\mathcal{X},\mathcal{Y}$ disjoint such that $\vass{X},\vass{Y}\geq \frac{1}{10}t$. Let $A=\cup_{i\in\mathcal{A}}R_i$ and $B=\cup_{i\in\mathcal{B}}R_i$ for some $\mathcal{A},\mathcal{B}$ disjoint such that $\vass{A},\vass{B}\geq \frac{1}{10}t$. We have that at most $ht$ vertices in $A\cup B$ have more than $ht$ 
%\end{corollary}

\printbibliography
\end{document}